\documentclass[12pt,english,a4paper]{amsart}
\usepackage[T1]{fontenc}
\usepackage{amsmath,amscd}
\usepackage{amssymb}
\usepackage{amsthm}
\usepackage{yfonts,textcomp}
\usepackage{mathrsfs}
\usepackage{MnSymbol}
\usepackage{pictexwd,dcpic}
\usepackage{xfrac}
\usepackage{babel}
\usepackage{enumerate}
\usepackage{hyperref}
\usepackage{enumitem}
\usepackage{stmaryrd}
\hypersetup{
    colorlinks=true,
    linkcolor=blue,
    filecolor=magenta,      
    urlcolor=cyan,
    citecolor=magenta
}
\usepackage{cleveref}
\usepackage{mdwlist} 
\usepackage{todonotes}

\usepackage[margin=1in]{geometry}

\newtheorem{thm}{Theorem}[section]
\newtheorem{cor}[thm]{Corollary}
\newtheorem{lem}[thm]{Lemma}
\newtheorem{prop}[thm]{Proposition}
\newtheorem{claim}[thm]{Claim}
\theoremstyle{definition}
\newtheorem*{defn}{Definition}
\newtheorem{conj}[thm]{Conjecture}

\theoremstyle{remark}
\newtheorem{remark}[thm]{Remark}

\numberwithin{equation}{section}

\newcommand{\cA}{\mathcal{A}}
\newcommand{\cB}{\mathcal{B}}
\newcommand{\cC}{\mathcal{C}}

\newcommand{\cF}{\mathcal{F}}
\newcommand{\cL}{\mathcal{L}}
\newcommand{\cM}{\mathcal{M}}
\newcommand{\cT}{\mathcal{T}}
\newcommand{\cS}{\mathcal{S}}
\newcommand{\cO}{\mathcal{O}}
\newcommand{\cR}{\mathcal{R}}

\newcommand{\QQ}{\mathbb{Q}}
\newcommand{\NN}{\mathbb{N}}
\newcommand{\CC}{\mathbb{C}}
\newcommand{\RR}{\mathbb{R}}

\newcommand{\ZZ}{\mathbb{Z}}
\newcommand{\GG}{\mathbb{G}}

\newcommand{\MM}{\mathbb{M}}
\newcommand{\DD}{\mathbb{D}}

\newcommand{\PR}{\mathrm{PR}}

\newcommand{\val}{\mathrm{val}}

\DeclareMathOperator{\acl}{acl}
\DeclareMathOperator{\dcl}{dcl}
\DeclareMathOperator{\Th}{Th}
\DeclareMathOperator{\RV}{RV}
\DeclareMathOperator{\rv}{rv}

\DeclareMathOperator{\rk}{rk}
\DeclareMathOperator{\rad}{rad}
\DeclareMathOperator{\trdeg}{tr.deg.}

\newcommand\Floris[1]{\leavevmode{\footnotesize\textbf{[Floris: #1]}}}

\title{Hensel minimality, $p$-adic exponentiation and Tate uniformization}
\author{Sebastian Eterovi\'c}
\email{sebastian.eterovic@univie.ac.at}
\address{Kurt G\"odel Research Center, Universit\"at Wien, 1090 Wien, Austria}

\author{Floris Vermeulen}
\email{florisvermeulen.math@gmail.com}
\address{University of Münster, Mathematics Münster, Einsteinstrasse 62, 48149 Münster, Germany}

\date{}

\thanks{S.E.\ was supported by EPSRC fellowship EP/T018461/1 and by the Austrian
Science Fund (FWF) 10.55776/ESP1584024. 
F.V.\ is supported by the Humboldt foundation.}

\keywords{Hensel minimality, p-adic exponentiation, Schanuel's conjecture, Tate uniformization, Zilber--Pink}

\begin{document}

\begin{abstract}
    We use Hensel minimality, a non-Archimedean analog of o-minimality, to study several questions around transcendental number theory, unlikely intersections, and differential fields in a non-Archimedean setting.
    In particular, we focus on $p$-adic exponentiation and Tate uniformization on $\CC_p$, which we show live in a Hensel minimal structure on $\CC_p$.
    We start by constructing a large collection of derivations on Hensel minimal fields that respect definable functions, which we then apply to the $p$-adic Schanuel conjecture.
    We also study properties of local definability in analogy to work of Wilkie, and show that $p$-adic Schanuel implies a uniform version of itself.
    For Tate uniformization we show a strong closure property when blurring, and deduce that $\CC_p$ with the \emph{blurred} Tate uniformization is quasiminimal.
    Finally, we prove a result on $p$-adic density of likely intersections for powers of elliptic curves.
\end{abstract}

\maketitle

\section{Introduction}

O-minimality is a framework coming from model theory in which one can carry out analytic arguments with guaranteed tameness. 
We remark that when we say ``analytic arguments'' we specifically mean arguments that are set in either real or complex analysis.
The successful use of o-minimality in important arithmetic geometry problems is now well-established, see for example \cite{pila:andre-oort, tsimerman:andre--ort, pila-shankar-tsimerman} for applications to the Andr\'e--Oort conjecture, \cite{pila-tsimerman:ax-schanuel,mpt,bakker-tsimerman:ax-schanuel-hodge} for results in functions transcendence, and \cite{daw-ren} for results regarding the Zilber--Pink conjecture. 
In part motivated by these achievements, various attempts have been made to come up with a similarly tame framework that supports non-Archimedean analytic arguments, such as P-minimality~\cite{pmin} or C-minimality~\cite{cmin1,cmin2}.
We will specifically focus on the more recent notion of 1-h-minimality~\cite{CHR, CHRV}, which applies more generally than the previous notions.

The purpose of this article is to showcase how 1-h-minimality can be used in analogy to various applications of o-minimality, to exhibit interesting examples of functions definable in 1-h-minimal structures, and to show how to sort some of the technical obstructions that arise.
For this, we will study some non-Archimedean analogs of problems around transcendental number theory, unlikely intersections, and differential fields. 
Most analytic applications of o-minimality start by considering the expansion of the real field $\RR$ by a collection of analytic functions $\mathcal{F}$ so that the resulting structure is o-minimal. 
Similarly, we will begin by considering expansions of $\CC_p$ by certain collections of analytic functions. 

We first study the existence of non-trivial field derivations $\delta:\CC_p\to\CC_p$ that respect a collection of functions $\mathcal{F}$ such that the structure $(\CC_p,\mathcal{F})$ is 1-h-minimal. 

\begin{thm}[Existence of non-trivial derivations, {{{Theorem~\ref{thm:derivations.text}}}}]\label{thm:derivations}
Let $\cT$ be a $1$-h-minimal theory and let $K$ be a model of $\cT$. Then there exists a collection of commuting derivations $\Delta=(\partial_\tau)_{\tau\in \cB}$ on $K$ with the following properties:
\begin{enumerate}
    \item if $U\subset K^n$ is open and $h: U\to K$ is a $\emptyset$-definable $C^1$ function, then for every $a = (a_1, \ldots, a_n)$ and every $\partial_\tau$ we have
    \[
    \partial_\tau (h(a)) = \sum_{i=1}^n (\partial_i h)(a) \partial_\tau a_i.
    \]
    \item  We have $\bigcap_\tau \operatorname{ker} \partial_\tau = \operatorname{acl}(\emptyset)$, where $\mathrm{acl}$ denotes the model-theoretic algebraic closure operator.
\end{enumerate}
\end{thm}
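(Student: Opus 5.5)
We follow the strategy used for o-minimal expansions of real closed fields (in the style of Fornasiero–Kaplan), adapted to $1$-h-minimality. The key input is that in a $1$-h-minimal theory, $\acl$ is a pregeometry (exchange holds) and equals $\dcl$ up to finite sets. Moreover, definable functions are generically $C^1$, and they are strictly differentiable, in the sense of the Jacobian property, on a cofinite or dense open set.

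Fix a $\acl$-basis $\cB$ of $K$ over $\emptyset$. For each $\tau\in\cB$ we define $\partial_\tau$ as the unique derivation-like map with $\partial_\tau(\tau)=1$, $\partial_\tau(\sigma)=0$ for $\sigma\in\cB\setminus\{\tau\}$, and the chain rule imposed on all $\emptyset$-definable functions. Concretely, let $c\in K$. Choose $b=(b_1,\dots,b_n)$ from $\cB$ and a $\emptyset$-definable function $f$ with $c=f(b)$; this is possible after replacing $\acl$ by $\dcl$. Since $b$ is $\acl$-independent, it is generic in $K^n$ over $\emptyset$. Hence $b$ lies in the interior of the $\emptyset$-definable set where $f$ is $C^1$, because the complement of that set has dimension $<n$ and so cannot contain a generic point. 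We then set
\[
\partial_\tau c=\sum_i (\partial_i f)(b)\,\partial_\tau b_i .
\]

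The steps are as follows.
\begin{enumerate}
\item \emph{$\acl$ versus $\dcl$.} Every element algebraic over $b$ is of the form $f(b)$ for some $\emptyset$-definable $f$ on a neighbourhood of $b$. This follows since finite $\emptyset$-definable families can be handled by taking the $i$-th element of a definable finite set. If definable Skolem functions for finite sets fail in general, we instead work with definable choice after adding constants from $\acl(\emptyset)$, or in $\RV$-expansions. For the statement here, I expect that $\acl(\emptyset)$-parameters suffice.
\item \emph{Well-definedness.} Suppose $f(b)=g(b')$ with $b,b'\subseteq\cB$. Enlarge both tuples to a common tuple $b$. Then $f-g$ is a definable $C^1$ function vanishing at the generic point $b$, so it vanishes on a neighbourhood of $b$. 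Hence its partial derivatives at $b$ vanish, and the two formulas for $\partial_\tau c$ agree.
\item \emph{Derivation properties.} Additivity and the Leibniz rule follow by applying the chain rule for compositions of definable $C^1$ functions on neighbourhoods of generic points. For example, $c+c'=(f+f')(b)$.
\item \emph{Property (1).} For general $a$, write $a_i=g_i(b)$. The composition $h\circ g$ is $C^1$ near $b$, and the classical chain rule gives the formula. Here one must check that $g(b)\in U$ and that $h\circ g$ is differentiable at $b$; both hold because $g$ is $C^1$ near $b$ and $U$ is open.
\item \emph{Commutation.} We have $\partial_\sigma\partial_\tau c=\sum_{i,j}\partial_j\partial_i f(b)\,\partial_\sigma b_j\,\partial_\tau b_i$. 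This requires $f$ to be generically $C^2$, which holds, together with symmetry of the mixed second partials. Symmetry of mixed partials in the non-Archimedean definable setting is a genuine point; I would derive it from generic strict differentiability of higher order, citing the Taylor approximation results in \cite{CHRV}.
\item \emph{Property (2).} Elements of $\acl(\emptyset)$ are values of $0$-ary functions, so $\partial_\tau$ kills them. Conversely, suppose $c\notin\acl(\emptyset)$, and write $c=f(b)$ with $n$ minimal. Then some $\partial_i f(b)\neq 0$. Otherwise all partials of $f$ would vanish on a neighbourhood of the generic point $b$. By generic local constancy, which holds for definable functions with vanishing derivative in $1$-h-minimal fields, $f$ would then be constant near $b$. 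Hence $c$ would be definable from fewer coordinates of $b$, contradicting the minimality of $n$. Thus $\partial_{b_i}c\neq 0$.
\end{enumerate}

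The main obstacle is the analytic input in the non-Archimedean setting. Step (2) needs the identity principle: a definable $C^1$ function vanishing at a generic point vanishes nearby, and its derivatives vanish there. The last step needs the converse principle that vanishing derivative on an open set forces local constancy. Both must be derived from generic strict differentiability and dimension theory in $1$-h-minimal structures, rather than from connectedness, which is unavailable. I would obtain them from the $1$-h-minimal Jacobian property on balls around generic points.
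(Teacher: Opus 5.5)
Your proposal is correct in outline, but it is organised differently from the paper's proof.

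\textbf{The paper's route.} The paper defines $\partial_\tau(a)=f'(\tau)$ using a \emph{one-variable} $\cL'(\cB\setminus\{\tau\})$-definable $f$ with $f(\tau)=a$. Differentiability at $\tau$ and independence of the choice of $f$ then follow from the one-variable Jacobian property alone, because $\tau$ lies in no finite $\cB\setminus\{\tau\}$-definable set. Several variables enter only for commutation, through a two-variable function over $\cB\setminus\{\tau,\sigma\}$ that is $C^2$ at $(\tau,\sigma)$. For (2) the paper shows $\ker\partial_\tau=\acl(\cB\setminus\{\tau\})$, using that the image of $\{f'=0\}$ under $f$ is finite, and then intersects over $\tau$.

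\textbf{Your route.} You evaluate $\emptyset$-definable functions of $n$ variables at a generic tuple from $\cB$. This costs you generic $C^1$ and $C^2$ in $n$ variables from the outset. In return you get one formula for all $\partial_\tau$ at once, and commutation reduces to symmetry of the Hessian at a generic point. The paper relies on the same symmetry tacitly, and you are right to flag it. Your minimal-$n$ argument for (2) also avoids the separate pregeometry fact $\bigcap_\tau\acl(\cB\setminus\{\tau\})=\acl(\emptyset)$.

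Two small remarks on your steps:
\begin{itemize}
\item The ``identity principle'' is not a real obstacle. By dimension theory, a $\emptyset$-definable set containing a generic point of $K^n$ contains it in its interior.
\item Passing from ``$f$ is constant near $b$'' to ``$c$ is definable from fewer coordinates'' needs a short dimension count, but it is routine.
\end{itemize}

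The one step you must pin down is Step 1, where your primary suggestions fail.
\begin{itemize}
\item A finite set in a valued field carries no definable ordering, so ``the $i$-th element'' is not available.
\item $\acl(\emptyset)$-parameters do not suffice in general. In $\mathrm{ACVF}_{0,0}$ with $v(t)=1$, $\dcl(\overline{\QQ}(t))$ is the henselization of $\overline{\QQ}(t)$, whose value group is $\ZZ$. Hence $\sqrt t\in\acl(t)$ is not of the form $f(t)$ for any $\acl(\emptyset)$-definable $f$.
\end{itemize}
Your fallback is the right fix, and it is exactly what the paper does. Pass to the $\RV$-expansion $\cL'$ with algebraic Skolem functions from \cite[Prop.\,4.3.3]{CHR}. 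This expansion preserves $1$-h-minimality and satisfies $\acl_{\cL'}=\acl_{\cL}=\dcl_{\cL'}$. It costs nothing for the statement: property (1) concerns $\cL$-definable $h$, which are $\cL'$-definable, and property (2) refers to $\acl_{\cL}(\emptyset)=\acl_{\cL'}(\emptyset)$.
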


Once we have these derivations, we study the model-theoretic question of local definability, following the treatment of Wilkie \cite{wilkie:holo} for holomorphic functions. 
We note that a few obstructions need to be sorted, one having to do with choosing appropriate reducts of 1-h-minimal theories which preserve 1-h-minimality while having a countable language (in general, reducts do not preserve 1-h-minimality, see \cite[Remark 5.1.2]{CHRV}), but the main one being that there is no non-Archimedean analog of Gabrielov's theorem which shows that the projections of analytic sets remain analytic. 

After this, the rest of the paper focuses on two specific $p$-adically analytic functions, which we show are part of 1-h-minimal theories in Corollary \ref{cor:countable.language.Cpexp} (even in a countable language).
We work in $\CC_p$, we denote the valuation ring by $\cO_{\CC_p}$, the maximal ideal by $\cM_{\CC_p}$ and the valuation by $v: \CC_p\to \QQ$.
One of the most famous transcendental functions known to be part of an o-minimal expansion of $\mathbb{R}$ is the real exponential function, so the first function we will focus on is $p$-adic exponentiation. 
We denote by $\DD_p\subset \CC_p$ the open ball of valuative radius $1/(p-1)$ around zero, and let $\exp: \DD_p\to 1+\DD_p$ be the $p$-adic exponential map. 

Using that $p$-adic exponentiation is part of a 1-h-minimal expansion of $\CC_p$, we obtain some results concerning the $p$-adic version of Schanuel's conjecture from transcendental number theory (see \S\ref{subsec:p-adicschanuel} for the statement of the conjecture). 
We first show that this conjecture implies a uniform version of itself (the analogous result about real exponentiation and Schanuel's conjecture is due to Kirby and Zilber \cite{kirby-zilber:unifschanuel}).
\begin{thm}
\label{thm:unifpschanuel}
    Let $V\subseteq\GG_a^n\times \GG_m^n$ be an irreducible algebraic variety defined over $\overline{\QQ}$ with $\dim V<n$. 
    Then the $p$-adic Schanuel conjecture implies that there exist finitely many proper $\QQ$-linear subspaces $L_1,\ldots,L_m\subset\GG_a^n$ such that if $(\mathbf{z},\exp(\mathbf{z}))\in V(\CC_p)$, then $\mathbf{z}\in L_i$, for some $i\in\{1,\ldots,n\}$. 
\end{thm}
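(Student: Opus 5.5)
We follow the strategy of Kirby--Zilber, replacing their o-minimal input by the Hensel minimal setting of Theorem~\ref{thm:derivations} and the local definability results announced in the introduction. The central tool is an Ax--Schanuel type statement for the derivations constructed in Theorem~\ref{thm:derivations}. Let $K$ be an $\aleph_1$-saturated (or sufficiently saturated) elementary extension of $\CC_p$ in a countable 1-h-minimal language containing $\exp$ on $\DD_p$; this exists by Corollary~\ref{cor:countable.language.Cpexp}. The derivations $\partial_\tau$ satisfy $\partial_\tau(\exp z) = \exp(z)\partial_\tau z$. Together with $\bigcap_\tau\ker\partial_\tau=\acl(\emptyset)$, this gives the Ax--Schanuel inequality in $K$: for $\mathbf z\in K^n$ that are $\QQ$-linearly independent modulo $\acl(\emptyset)$,
\[
\trdeg_{\acl(\emptyset)}(\mathbf z,\exp\mathbf z)\geq n .
\]

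We argue by contradiction. Suppose no finite family of proper rational subspaces works. For each finite family $L_1,\dots,L_m$, the set of $\mathbf z$ with $(\mathbf z,\exp\mathbf z)\in V$ and $\mathbf z\notin\bigcup L_i$ is nonempty. Each such set is definable over $\overline\QQ$, because a rational subspace is $\emptyset$-definable. Since there are only countably many rational subspaces, saturation gives $\mathbf z\in K^n$ with $(\mathbf z,\exp\mathbf z)\in V(K)$ and $\mathbf z$ in no proper rational subspace. This means the coordinates of $\mathbf z$ are $\QQ$-linearly independent.

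We then need linear independence modulo $\acl(\emptyset)$, not merely over $\QQ$. If instead $\sum q_i z_i=c\in\acl(\emptyset)$, we reduce dimension. Such a $c$ lies in some $\emptyset$-definable finite set. We could include the countably many conditions ``$\sum q_iz_i\notin F$'' for $\emptyset$-definable finite $F$ in the type. This is legitimate provided we know that for each $V$ there are at most finitely many affine rational translates $\{\sum q_iz_i=c\}$ containing infinitely many (or Zariski dense) solutions. Otherwise we induct on $n$: we restrict to the affine subspace $\sum q_i z_i=c$, parametrize it as a coset, and obtain a variety of smaller dimension in lower $n$. Affine shifts of the logarithm appear via $\exp(c)$, and $c$ lies in $\acl(\emptyset)$. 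Once independence modulo $\acl(\emptyset)$ is established, Ax--Schanuel gives $\trdeg(\mathbf z,\exp\mathbf z)\geq n>\dim V$, contradicting $(\mathbf z,\exp\mathbf z)\in V$ with $V$ defined over $\overline\QQ$.

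This shows the statement is unconditional for generic points. So where does $p$-adic Schanuel enter? It enters in controlling the constants. The points with $\mathbf z$ satisfying a rational affine relation with constant $c\in\acl(\emptyset)$ must be handled in $\CC_p$ itself, with $c$ a genuine constant. Here we use that $\acl(\emptyset)$ in $\CC_p$ is a countable field. By Schanuel, $\trdeg(\mathbf z,\exp\mathbf z)\ge$ the linear dimension over $\QQ$. This yields a bound on the possible rational relations, uniform in $V$ by a degree and height argument on the rational relations, as in Kirby--Zilber. It also eliminates infinitely many distinct constants $c$.

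We expect the main obstacle to be this last step. We must show that only finitely many rational subspaces (as opposed to affine translates) are needed. This requires combining Schanuel applied to the actual points of $\CC_p$ with a compactness argument that the family of rational relations is bounded. The lack of a Gabrielov theorem means we should phrase everything through the derivations rather than through analytic projections.
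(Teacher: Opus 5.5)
Your proposal has a genuine gap. The step you flag at the end as ``the main obstacle'' is the whole content of the theorem, and the saturation route you set up cannot reach it.

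The $p$-adic Schanuel conjecture is not first-order: it is an infinite disjunction over integer relations. So it does not transfer from $\CC_p$ to your $\aleph_1$-saturated extension $K$. In $K$ you only have the Ax-type inequality for the derivations of Theorem~\ref{thm:derivations}. As in Corollary~\ref{cor:countable.subfield.schanuel}, this only tells you that the point $\mathbf z$ produced by compactness satisfies an affine relation $\sum q_iz_i=c$ with $c\in\acl(\emptyset)$. Since your type forces $\mathbf z$ to avoid every linear rational subspace, you in fact get $c\neq 0$, and nothing in $K$ contradicts this.

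Pulling the relation back to $\CC_p$ does not help either. Applying the conjecture to points of $X\cap\{\sum q_ix_i=c\}$ again gives relations only pointwise, with no uniformity. Your remaining ingredients are asserted rather than proved: the finiteness of affine translates carrying many solutions, the induction on $n$, and the ``degree and height argument.'' The first of these is itself a nontrivial uniformity statement of the same kind as the theorem.

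The missing idea is to stay in $\CC_p$, where the conjecture holds pointwise, and use analyticity. Trade the countability of the set of integer relations against the uncountability of $\CC_p$, then spread a single relation over a whole piece of
$X=\{\mathbf x\in\DD_p^n : (\mathbf x,\exp\mathbf x)\in V\}$. This is what the paper does:
\begin{enumerate}
\item $X$ is definable, so by analytic cell decomposition \cite[Thm.\,6.3.8]{CL} it is a finite union of cells. Each cell is the image of a definable analytic map $\theta$ on a product of open discs and annuli.
\item By the conjecture, every point of $X$ satisfies one of countably many relations $\mathbf m\cdot\mathbf x=0$ with $\mathbf m\in\ZZ^n\setminus\{0\}$.
\item Along a line $t\mapsto t\mathbf x+(1-t)\mathbf y$ in the parameter domain, some single relation must hold for uncountably many $t$. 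By 1-h-minimality it then holds on a ball, and by the identity principle on the whole line.
\item Inductively, the relation holds on the whole cell, so finitely many cells give finitely many subspaces $L_i$.
\end{enumerate}
An equivalent variant: choose in each cell a point generic over its defining parameters, which exists in $\CC_p$ because the language is countable. Apply the conjecture to that point, note that the resulting hyperplane section of the cell is definable of full dimension, and conclude by the identity principle on the parameter domain. Neither version needs derivations or Ax--Schanuel.
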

Then we show two weak forms of the $p$-adic Schanuel conjecture using Ax's theorem on functional transcendence of exponentiation (although, as we explain in \S\ref{subsec:p-adicschanuel}, these two weak forms could have been already deduced from earlier work of Kirby~\cite{kirby-expalg}). 

Many applications of o-minimality to problems in arithmetic geometry (especially when it comes to problems of unlikely intersections) often involve periodic holomorphic functions, such as complex exponentiation, the exponential maps of abelian varieties, or automorphic functions of Shimura varieties. 
Some of the key applications of o-minimality in this area (like the o-minimal proofs of Ax--Schanuel type theorems) exploit the fact that these functions are not definable in an o-minimal structure (because of periodicity), but their restrictions to a fundamental domain are definable and one can move between fundamental domains in an algebraic (hence definable) way. 
This then presents a basic obstruction in the case of $p$-adic exponentiation because $\exp$ is injective on its domain of convergence and has no natural extension to all of $\CC_p$. 

For this reason, for the remainder of the paper we move our attention to our second function of interest: Tate's uniformization map of elliptic curves, which will be recalled in \S\ref{sec:Tate}. 
For $q\in \CC_p^\times$ we denote by $\phi_q: \CC_p^\times \to E_q$ the Tate uniformization map. 

We first prove an analog of a theorem of Kirby on so-called blurrings of the complex exponential function~\cite{kirby2019blurred}. 
This requires a functional transcendence result for Tate's uniformization map, which we obtain from works of Ax and Kirby, see Proposition \ref{prop:ax.schanuel.tate}. 
Notably, Kirby uses this result on blurrings to prove that the expansions of $\CC$ by various blurrings of the exponential are quasiminimal structures, and we explain that the same should hold for expansions of (the field structure of) $\CC_p$ equipped with appropriate blurrings of the Tate uniformization, see . 

\begin{thm}\label{thm:tateblurring}
    Let $Q$ be a $p$-adically dense subset of $E_q(\mathbb{C}_p)$ and let $V\subset \GG_m^n\times E_q^n$ be an irreducible rotund variety.
    Then $V(\CC_p)$ has a $p$-adically dense set of points of the form
    \[
    (\mathbf{x},\phi_q(\mathbf{x}) + \mathbf{r})
    \]
    where $\mathbf{x}\in\mathbb{G}_m^n(\CC_p)^n$ and $\mathbf{r}\in Q^n$.
\end{thm}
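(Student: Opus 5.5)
We follow Kirby's argument for blurred complex exponentiation \cite{kirby2019blurred}, replacing complex-analytic tools by $p$-adic ones and the Ax--Schanuel input by Proposition~\ref{prop:ax.schanuel.tate}. It suffices to produce, for an arbitrary nonempty $p$-adically open set $W\subset V(\CC_p)$, one point of $W$ of the required form.

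\textbf{Step 1: reduce to intersecting with the blurred graph.} Since $Q$ is dense in $E_q(\CC_p)$ and the group law is continuous, the set of $(\mathbf{x},\mathbf{y})$ with $\mathbf{y}-\phi_q(\mathbf{x})\in Q^n$ is dense in $\GG_m^n\times E_q^n$. So it is enough to find points of $V$ that lie on the graph of $\phi_q$ shifted by $\mathbf{r}$, and whose position moves continuously and openly as $\mathbf{r}$ varies. Concretely, consider
\[
F: V(\CC_p)\to E_q^n(\CC_p),\qquad F(\mathbf{x},\mathbf{y})=\mathbf{y}-\phi_q(\mathbf{x}).
\]
We want $F(W)\cap Q^n\neq\emptyset$. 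Since $Q^n$ is dense, it suffices to show that $F(W)$ has nonempty interior. By the $p$-adic inverse function theorem (the map $F$ is locally analytic on the smooth locus of $V$), this holds as soon as $dF$ is surjective at some smooth point of $W$.

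\textbf{Step 2: the main obstacle, surjectivity of $dF$ at some point of $W$.} Here $\dim V\ge n$ by rotundity. Suppose instead that $dF$ has rank $<n$ on all of $W$ (shrinking $W$ to smooth points). Then the fibres of $F$ in $W$ have dimension $>\dim V-n$. Such a fibre is an analytic set contained in $V\cap(\Gamma_{\phi_q}+(0,\mathbf{r}))$, which is a translate of the graph of $\phi_q$. Translating by $\mathbf{r}$ (or replacing $V$ by $V-(0,\mathbf{r})$, which is still rotund), we obtain an analytic component of $V'\cap\Gamma_{\phi_q}$ of dimension $>\dim V'-n$. This is an atypical intersection.

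Proposition~\ref{prop:ax.schanuel.tate} (Ax--Schanuel for Tate uniformization, in its intersection form) then forces the Zariski closure of this component to lie in a coset of a proper algebraic subgroup $H$ of $\GG_m^n\times E_q^n$ of the form $\{(\mathbf{x},\mathbf{y})\}$ cut out by multiplicative and isogeny relations. The resulting dimension count, $\dim(V\cap \text{coset})>\dim$ of the quotient defect, contradicts rotundity, which says $\dim \pi_H(V)\ge \operatorname{codim}$ appropriately for every such quotient.

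The delicate parts are these. First, the paper's Ax--Schanuel statement must be applied to germs of analytic sets in the $p$-adic setting, which we expect to be done via formal power series, as in Ax. Second, $E_q$ may have CM or non-CM endomorphisms, which changes which subgroups occur. Third, one must handle the fact that $\phi_q$ is only locally analytic, which we address by working in small discs where it is given by convergent series.

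\textbf{Step 3: conclude.} Take a point of $W$ where $dF$ is surjective. Its image contains an open set $U\subset E_q^n(\CC_p)$, so we can pick $\mathbf{r}\in Q^n\cap U$ and a preimage $(\mathbf{x},\mathbf{y})\in W$ with $\mathbf{y}=\phi_q(\mathbf{x})+\mathbf{r}$. Since $W$ was arbitrary, the set of such points is dense.
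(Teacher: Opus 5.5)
Your reduction (show $F(W)$ has interior via the submersion theorem, otherwise every fibre of $F$ in $W$ is an atypical analytic intersection with a translate of the graph) is a viable alternative to the paper's route. The paper instead builds derivations with countable constant field $C=\acl(\text{parameters},t)$ from $1$-h-minimality (Theorem~\ref{thm:derivations.text}), and uses h-minimal dimension theory in place of the submersion and constant-rank theorems. But Step~2 has a genuine gap: the claim that the atypical component "contradicts rotundity" is false for an arbitrary fibre. Rotundity is a statement about $\dim MV$, i.e.\ about the \emph{generic} fibre of $V\to MV$. Lemma~\ref{lem:V_M} bounds $\dim(V\cap(\gamma+T))\le \dim V-n+k$ only for $\gamma$ in a nonempty Zariski-open $V_M\subset V$. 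On the proper closed set $V\setminus V_M$ the fibre dimension can jump, and rotundity says nothing there. Ax--Schanuel applied to the germ through a point $w$ only tells you that $w$ lies on a coset meeting $V$ in excess dimension, which is no contradiction unless $w\in V_M$. Since $M$ depends on the point and ranges over the countable set $\MM_n(\ZZ)$, you must pick $w\in W\cap\bigcap_M V_M(\CC_p)$. This is a countable intersection of nonempty Zariski-open subsets of the irreducible $V$, hence still dense in the $p$-adic topology (Baire). This is exactly the role of the set $W=V'\cap\bigcap_M V_M(\CC_p)$ in the paper's proof. Your hypothesis "rank $<n$ on all of $W$" lets you choose such a point, so the repair is available, but the argument does not go through without it.

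Second, the qualitative conclusion "the Zariski closure lies in a coset of a proper subgroup" does not by itself yield any dimension count. You need the quantitative form the paper uses. Take $k$ minimal such that $\mathbf{x}^M$ (and hence $M\mathbf{y}=\phi_q(\mathbf{x}^M)+M\mathbf{r}$) is constant on the germ for some $M$ of rank $n-k$. Then apply Proposition~\ref{prop:ax.schanuel.tate} to the $k$ coordinates that remain multiplicatively independent modulo constants, to get $\dim(V\cap(\gamma+T))\ge \trdeg\ge k+\dim X>k+\dim V-n$. Only this inequality, evaluated at a point of $V_M$, contradicts Lemma~\ref{lem:V_M}. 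Finally, your CM worry is unfounded: $v(j(E_q))=-v(q)<0$, so $E_q$ never has CM. That is precisely the hypothesis Proposition~\ref{prop:ax.schanuel.tate} requires, and it is why rotundity can be phrased with $\MM_n(\ZZ)$ alone.
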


Finally, we establish a result on the $p$-adic density of likely intersections for powers of elliptic curves, following the work of \cite{eterovic-scanlon}.

\begin{thm}
    \label{thm:likely}
Let $V\subseteq E_q^n$ be an irreducible subvariety. 
Suppose $S$ is an algebraic subgroup of $E_q^n$ such that for every algebraic subgroup $T\subseteq E_q^n$ the quotient map $\psi:E_q^n\to E_q^n/T$ satisfies $\dim\psi(V)+\dim\psi(S)\geq n-\dim T$. 
Then for every p-adically dense subset $A\subseteq E_q^n(\CC_p)$,
\begin{equation*}
    \bigcup_{\mathbf{g}\in A}V\cap (\mathbf{g}+S)
\end{equation*}
is $p$-adically dense in $V$.
\end{thm}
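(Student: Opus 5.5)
The plan is to follow the strategy of \cite{eterovic-scanlon}: reduce density of the union to a dimension count for a single intersection, and then exploit that $A$ is dense. Fix a nonempty $p$-adic open subset $U\subseteq V(\CC_p)$; we must find $\mathbf{g}\in A$ with $V\cap(\mathbf{g}+S)\cap U\neq\emptyset$. Consider the morphism
\[
\mu: V\times S\to E_q^n,\qquad (\mathbf{v},\mathbf{s})\mapsto \mathbf{v}-\mathbf{s}.
\]
A point $\mathbf{g}$ lies in $\mu((U\times S(\CC_p)))$ exactly when $(\mathbf{g}+S)\cap V\cap U\neq\emptyset$. It therefore suffices to show that $\mu$ is dominant, i.e.\ $V-S=E_q^n$, and in fact that $\mu$ is generically submersive. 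Then, by the $p$-adic implicit/open mapping theorem (or the dimension theory of the 1-h-minimal structure on $\CC_p$ from the earlier sections), $\mu(U\times S(\CC_p))$ contains a nonempty $p$-adic open subset of $E_q^n(\CC_p)$. Density of $A$ then gives the desired $\mathbf{g}$.

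The first step is to show $\dim(V+S)=n$ from the hypothesis. I would argue by contradiction. The set $W:=\overline{V+S}$ is an irreducible subvariety, and it is invariant under translation by $S$. Let $T$ be the connected component of its stabilizer $\{\mathbf{t}:\mathbf{t}+W=W\}$, which is an algebraic subgroup containing $S^\circ$. Set $\psi:E_q^n\to E_q^n/T$. If $\dim W<n$, I want to find a subgroup $T'$ violating the inequality. The natural candidate is the subgroup used in the Ax/Zilber-type argument: for a general subvariety $W$, one chooses $T'$ so that $\psi'(W)$ has finite stabilizer. Then one would like $\dim \psi'(V)+\dim\psi'(S)\le \dim\psi'(W)<n-\dim T'$. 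The hard part is to justify the second inequality. This is exactly the additive analogue of the Ueno/Kawamata structure statement: a subvariety of an abelian variety with finite stabilizer is of general type. I would instead try to argue more elementarily and avoid that structure theorem. Take $T=0$ first. If $\dim V+\dim S\ge n$ but $\dim(V+S)<n$, then the generic fibre of $\mu$ has positive dimension. I would then show that the fibres are translates of a positive-dimensional subgroup coming from $S\cap(\text{stabilizer directions of }V)$, pass to the quotient by that subgroup, and induct on $n$. The hypothesis for all $T$ is inherited by quotients, since subgroups of $E_q^n/T$ pull back to subgroups of $E_q^n$. This induction is the main obstacle I expect. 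If it fails, I would fall back on citing the corresponding algebraic lemma in \cite{eterovic-scanlon}, which is purely about abelian varieties and holds over any algebraically closed field, in particular over $\CC_p$.

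Once dominance is established, generic smoothness in characteristic zero gives a nonempty Zariski open subset of $V^{\mathrm{reg}}\times S$ on which $d\mu$ is surjective. Zariski open dense subsets are $p$-adically dense in $V(\CC_p)\times S(\CC_p)$ (on the smooth locus, via local analytic charts), so $U\times S(\CC_p)$ contains such a submersive point. The $p$-adic analytic implicit function theorem on $\CC_p$ then gives an open image, and this completes the argument.
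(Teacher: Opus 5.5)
Your reduction is correct: $\mathbf{g}\in\mu(U\times S(\CC_p))$ exactly when $U\cap(\mathbf{g}+S)\neq\emptyset$, so it suffices that this image has non-empty interior. Generic smoothness plus the analytic submersion theorem then finishes the argument. The step your proposal does \emph{not} establish is the first one, $V+S=E_q^n$. You treat it as the main obstacle and route it through stabilizers, a Ueno/Kawamata-type statement and an unspecified induction, ending with a fallback citation. As written this leaves a hole, and the diagnosis is off.

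The missing observation is to apply the hypothesis with $T=S$. Then $\psi(S)$ is a point, so $\dim\psi(V)\ge n-\dim S=\dim(E_q^n/S)$. Since $\psi(V)$ is closed (by properness) and irreducible, $\psi(V)=E_q^n/S$, i.e.\ $V+S=\psi^{-1}(\psi(V))=E_q^n$.

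Your own choice $T'=\mathrm{Stab}(W)^\circ$ would also have worked at once. It contains $S^\circ$, so $\dim\psi'(S)=0$, and $\dim\psi'(W)=\dim W-\dim T'<n-\dim T'$ because $W$ is a union of $T'$-cosets. For a general $T'$, only the reverse of your first inequality is automatic, since $\psi'(W)$ is the image of $\psi'(V)\times\psi'(S)$ under addition. Conversely, $\psi_T(V)+\psi_T(S)=\psi_T(V+S)$ shows that $V+S=E_q^n$ implies the inequality for every $T$. So the hypothesis of the theorem is equivalent to its single instance $T=S$.

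Two small points. If $S$ is disconnected, use $T=S^\circ$ and apply generic smoothness on the smooth irreducible variety $V^{\mathrm{reg}}\times S^\circ$. To see that the submersive locus meets $U\times S^\circ(\CC_p)$ even when $U$ is a small neighbourhood of a singular point of $V$, use the local dimension property (f) of Section~\ref{sec:hmin}, not just analytic charts on the smooth locus.

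With this fix your proof is complete, and it takes a genuinely different route from the paper's. The paper lifts to $\GG_m^n$ through a definable fundamental domain of $\phi_q$ and forms the definable incidence relation $\mathcal{R}\subseteq U\times\GG_m^n$. It computes $\dim\mathcal{R}=n+k$ from the fibres over $U$. It then bounds the fibres over $\mathbf{g}$ below by Proposition~\ref{prop:dim.intersection}, and above by weak Zilber--Pink (Theorem~\ref{thm:weakzp}) together with the persistently likely hypothesis for the finitely many $H\in\mathscr{H}$, after shrinking $U$. Hence all non-empty fibres have dimension exactly $k=\dim V+\dim S-n$, the projection of $\mathcal{R}$ to $\GG_m^n$ has dimension $n$, and so it has interior.

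Your route needs neither the Tate uniformization, nor 1-h-minimality, nor weak Zilber--Pink, and it works for any abelian variety over $\CC_p$, because the group law makes the sum $V+S$ available. In return, the paper's argument gives extra uniform control on the fibre dimensions and follows the definable dimension-counting template of \cite{eterovic-scanlon}. For the density statement itself, that is more machinery than is needed.
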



The reader familiar with applications of o-minimality to problems in arithmetic geometry is probably aware of at least two major results in o-minimality: the Pila--Wilkie point-counting theorem \cite{pila-wilkie} and the Definable Chow theorem (also known as the o-minimal GAGA) \cite[Theorem 5.1]{peterzil-starchenko:chow}. 
These results will not make an appearance in our paper, but there are already non-Archimedean analogs and applications of both using earlier notions of tame p-adic analytic geometry \cite{cantoral-nguyen-stout-vermeulen,chambertloir-loeser,cluckers-comte-loeser,oswal:definablechow}.
We believe our results already illustrate some general principles about working with 1-h-minimal structures, and we expect that these ideas will be combined with those in the works cited above to lead to further applications of 1-h-minimality to problems of unlikely intersections in the non-Archimedean setting. 
In particular, we believe our results extend to more general uniformization maps of abelian varieties.

\subsection{Structure of the paper}

\begin{enumerate}
    \item[\S\ref{sec:prelims}:] We review the definitions of $p$-adic exponentiation and Tate's uniformization for elliptic curves. 
    We also give background on 1-h-minimality, Weierstrass systems, and we prove that $p$-adic exponentiation and Tate's uniformization are part of the same 1-h-minimal expansion of $\CC_p$. 

    \item[\S\ref{sec:derivations}:] We show the existence of field derivations on $\CC_p$ which respect the definable functions of a given 1-h-minimal expansion. 
    We study local definability, following Wilkie~\cite{wilkie:holo}.
    We then look at show some results concerning the $p$-adic Schanuel conjecture, including Theorem \ref{thm:unifpschanuel}. 

    \item[\S\ref{sec:blurredtate}:] We show how to obtain a functional transcendence result for Tate's uniformization map. 
    We then prove Theorem \ref{thm:tateblurring}, and use it to deduce quasiminimality.

    \item[\S\ref{sec:likely}:] We give background for the Zilber--Pink conjecture and we prove Theorem \ref{thm:likely}.
\end{enumerate}

\subsection*{Acknowledgements} The authors thank Raf Cluckers, Martin Hils, and Vincenzo Mantova for interesting discussions around this paper. 
We thank Tom Scanlon for suggesting the use of the Tate uniformization map.
We thank Abhishek Oswal for explaining the proof of Proposition~\ref{prop:dim.intersection} to us.

\section{Preliminaries}
\label{sec:prelims}

\subsection{\texorpdfstring{$p$}{p}-adic exponentiation}
\label{subsec:pexp}
We recall some generalities about the $p$-adic exponential map. 
Consider the usual power series defining the exponential
\[
\sum_{n=0}^\infty \frac{x^n}{n!}.
\]
On $\CC_p$, it converges on the open ball $\DD_p$ of valuative radius $1/(p-1)$ around $0$ and we denote the corresponding function by $\exp: \DD_p\to \CC_p$. 
The image of $\exp$ is precisely the open ball $1+\DD_p$ of valuative radius $1/(p-1)$ around $1$, and in fact $\exp$ is a bijection $\DD_p\to 1+\DD_p$. 
Its inverse is the familiar map $\log: 1+\DD_p\to \DD_p$, which is also analytic. 
The map $\exp$ is differentiable on its domain with $\exp' = \exp$, and satisfies the usual rule that $\exp(x+y) = \exp(x)\exp(y)$.


\subsection{Tate uniformization}\label{sec:Tate}

As we disussed in the introduction, one issue with the $p$-adic exponential is that its domain is the bounded set $\DD_p$ in $\CC_p$, and there is no natural way to extend $\exp$ to a global analytic function on all of $\CC_p$. 
So we will consider the Tate uniformization map for elliptic curves over $\CC_p$. 
All of the facts below can be found in~\cite[Ch.\,V]{silverman}. 

Note that $\CC_p$ has no infinite discrete subgroups, and so there is no analog of a lattice here.
Hence, there is no natural analog of the Weierstrass $\wp$-function.
However, if $\tau\in \CC$ is such that $\Im \tau > 0$, then we can decompose the corresponding $\wp$-function as
\[
\wp: \CC / (\ZZ+\tau \ZZ)\xrightarrow{x\mapsto \exp(2\pi ix)}\CC^\times / (q^\ZZ)\to E,
\]
where $q = \exp(2\pi i \tau)$ and $E$ is the elliptic curve associated with $\tau$.
In $\CC_p$ this second map does make sense and is called the Tate uniformization map.

In more detail, for $q\in \CC_p^\times$ with $v(q)>0$ and $k\in \NN$ consider the series
\[
s_k(q) = \sum_{n\geq 1}\frac{n^k q^n}{1-q^n}.
\]
This series has coefficients in $\ZZ$, and so, since $v(q)>0$, it converges to an element of $\CC_p$. 
Define $a_4(q) = -5s_3(q)$ and $a_6(q) = -(5s_3(q) + 7s_5(q))/12$ and let $E_q$ be the elliptic curve over $\CC_p$ defined by 
\[
y^2 + xy = x^3 + a_4(q)x + a_6(q).
\]
Consider the series
\begin{align*}
X(q,u) &= \sum_{n\in \ZZ} \frac{q^nu}{(1-q^nu)^2} - 2s_1(q), \\
Y(q,u) &= \sum_{n\in \ZZ} \frac{(q^nu)^2}{(1-q^nu)^3} + s_1(q),
\end{align*}
both of which converge for $u\in \CC_p\setminus q^\ZZ$, where $q^\ZZ := \{q^n\mid n\in \ZZ\}$. 
Then \emph{Tate uniformization} is the map
\[
\phi_q: \CC_p^\times \to E_q(\CC_p): u\mapsto (X(q,u), Y(q,u)),
\]
which is a surjective homomorphism of groups, with kernel $q^\ZZ$.

Note that the $j$-invariant of $E_q$ satisfies
\[
v(j(E_q)) = v\left( \frac{1}{q} + 744 + 196884q + \ldots \right) = -v(q) < 0.
\]
Therefore, not every elliptic curve over $\overline{\QQ}$ is isomorphic to an elliptic curve of the form $E_q$.
In particular, if $E$ is an elliptic curve over $\overline{\QQ}$ with complex multiplication, then it is well-known that $j(E)$ is an algebraic integer, and so $v(j(E)) \geq 0$.
Hence $E_q$ never has complex multiplication.

The functions $X,Y$ are differentiable with respect to $u$ and satisfy
\[
uX'(q,u) = X(q,u)+2Y(q,u).
\]
In particular, for a fixed $q$ the map $u\mapsto X(q,u)$ satisfies the following differential equation
\begin{equation}
\label{eq:diffeqtate}
    (uX')^2 = 4X^3 + X^2 + 4a_4(q)X + 4a_6(q).    
\end{equation}
This differential equation will be important later to obtain an Ax--Schanuel result for the Tate map, see Proposition~\ref{prop:ax.schanuel.tate}.

We will show that the restriction of $\phi_q$ to the fundamental domain $\{u\in \cO_{\CC_p}\mid v(u)<v(q)\}$ is definable in a suitable 1-h-minimal structure on $\CC_p$. 
Note that by the periodicity of $\phi_q$, the restriction to this set determines the entire function. 
Just as in the o-minimal case, one cannot expect $\phi_q$ to be definable on its entire domain $\CC_p$, precisely because of this periodicity. 
In fact, we will prove the more general fact that $\phi_q$ is definable in a 1-h-minimal structure on $\CC_p$ as a function of two variables $(q,u)$, similar to a result by Peterzil--Starcheko~\cite{peterzil-starchenko} for the Weierstrass $\wp$-function.

\subsection{Hensel minimality}\label{sec:hmin}

In this section, we recall the relevant results from h-minimality. 
We begin with some set-up and notation.

Let $K$ be a valued field with valuation ring $\cO_K$ and valuation $v: K\to \Gamma\cup\{\infty\}$ with additive notation. 
For $a\in K$ and $\lambda\in \Gamma$ denote by $B_{>\lambda}(a) = \{x\in K\mid v(x-a)>\lambda\}$ the open ball of radius $\lambda$ around $a$, and by $B_{\geq \lambda}(a) = \{x\in K\mid v(x-a)\geq \lambda\}$ the closed ball of radius $\lambda$ around $a$. 
If $B$ is an open ball, then we denote by $\rad B$ the radius of $B$. 
If $C$ is a finite subset of $K$, then a \emph{ball $1$-next to $C$} is a maximal open ball disjoint from $C$, where maximal is meant with respect to inclusion. 
If $\lambda$ is an element of $\Gamma$ with $\lambda \geq 0$ then a \emph{ball $\lambda$-next to $C$} is an open ball $B$ that is contained in an open ball $B_0$ $1$-next to $C$, and for which $\rad B = \rad B_0 + \lambda$. 
Note that the balls $\lambda$-next to $C$ partition the set $K\setminus C$, and two elements $x,y$ of $K\setminus C$ are in the same ball $\lambda$-next to $C$ if and only if for every $c\in C$ we have $v(x-y) > \lambda + v(x-c)$.
If $\lambda = v(m)$ for some integer $m$, we say simply \emph{$m$-next} rather than $v(m)$-next.

Hensel minimality, or h-minimality for short, is a tameness notion for valued fields developed in~\cite{CHR} in equicharacteristic zero, and in~\cite{CHRV} in mixed characteristic. 
It fulfills a similar role as o-minimality in the Archimedean context. 
We will work with a $1$-h-minimal theory $\cT$ in a language $\cL$ expanding the language of valued field $\cL_{\val} = \{0,1,+,\cdot, \cO\}$. 
The precise definition is not important for us, but we list the main properties that we use below. 
Let $K$ be a model of $\cT$.

\begin{enumerate}
    \item \textbf{(Jacobian property and differentiation~\cite[Cor.\,3.1.3]{CHRV})} If $f: K\to K$ is a definable function, then there exists a finite set $C\subset K$ and a positive integer $m$ such that on each ball $B$ $m$-next to $C$, $f$ is $C^1$, $v(f')$ is constant on $B$, and for all $x,y\in B$ we have that
    \[
        v(f(x)-f(y)) = v(f'(x)) + v(x-y).
    \]
    Moreover, the set $C$ is definable over the same parameters as $f$. 
    This is the analog of the monotonicity theorem from o-minimality.
    
    \item \textbf{(Algebraic Skolem functions~\cite[Prop.\,4.3.3]{CHR})} There exists an expansion of the language $\cL'\supset \cL$ such that $\Th_{\cL'}(K)$ is still $1$-h-minimal and such that algebraic Skolem functions exist. 
    That is, for any model $K'$ of $\Th_{\cL'}(K)$, and any subset $A\subset K'$, we have $\acl_{K', \cL'}(A) = \dcl_{K', \cL'}(A)$ (where $\mathrm{acl}$ and $\mathrm{dcl}$ denote the model theoretic algebraic closure and definable closure operators respectively). 
    Equivalently, every definable finite-to-one map has a definable section. 
    Moreover, algebraic closure does not grow in the sense that $\acl_{K', \cL}(A) = \acl_{K',\cL'}(A)$. 
    The existence of algebraic Skolem functions is a weak form of definable choice, and should be compared with definable choice in the o-minimal setting.
    
    \item \textbf{(Pregeometry~\cite[Lem.\,5.3.5]{CHR})} The model theoretic algebraic closure operator $\acl$ is a pregeometry on $K$. 
    In particular, combining this with the previous point we may expand the language to obtain that $\dcl$ is a pregeometry, while preserving $1$-h-minimality.
    
    \item \textbf{(Dimension theory~\cite[Prop.\,3.1.1]{CHRV})} To every non-empty definable set $X\subset K^n$ one can associate a positive integer called the \emph{dimension of $X$}, denoted by $\dim X$, with the following properties:
    \begin{enumerate}
        \item $\dim (X\cup Y) = \max\{\dim X, \dim Y\}$ for $X,Y\subset K^n$ definable,
        \item $\dim X > 0$ if and only if $X$ is infinite,
        \item if $f: X\to Y$ is definable and surjective with fibers of dimension $d$, then $\dim X = d + \dim Y$,
        \item if $X\subset K^n$ has dimension $n$ then $X$ has non-empty interior, and
        \item if $f: X\to Y$ is definable then for every integer $i$ the set of $y\in Y$ with $\dim f^{-1}(y) = i$ is a definable subset of $Y$.
    \end{enumerate}
    For $x\in X$ we denote by $\dim_x(X)$ the \emph{local dimension of $X$ at $x$}, which is by definition the minimum of $\dim(X\cap B)$ where $B$ ranges over all open balls in $K^n$ containing $x$.
    We then, moreover, have the following:
    \begin{enumerate}[resume]
        \item if $K$ is algebraically closed and $V\subset \mathbb{A}^N_K$ is an affine algebraic variety of dimension $n$ as an algebraic variety, then $V(K)$ has local dimension $n$ at every point. 
    \end{enumerate}
\end{enumerate}

Since we are interested in $p$-adic exponentiation and Tate uniformization, one would hope that the structure $\CC_p$ is 1-h-minimal in the language of valued fields with a symbol for $\exp$ or $\phi_q$. 
At first glance, this seems to follow from~\cite[Thm.\,6.2.1]{CHR} which asserts that fields equipped with separated analytic structure are $1$-h-minimal, and there are certainly separated Weierstrass systems containing the exponential and Tate uniformization. 
However, there is a subtle issue with this, as reducts of h-minimal structures are not necessarily h-minimal, and so one cannot deduce from the h-minimality of $\CC_p$ equipped with separated analytic structure that also $\CC_p$ with only $\exp$ or $\phi_q$ is h-minimal. 

While one could just work in a language with symbols for elements of a separated Weierstrass system, we will also need our language to be countable. 
Our solution is a general tool to get a countable language for h-minimal fields. 
For the proof we assume some knowledge about h-minimal structures.

\begin{lem}\label{lem:h.min.countable.language}
    Let $\cL$ be a language expanding the language of valued fields $\cL_{\mathrm{val}} = \{+,\cdot, \cO\}$ and let $K$ be an $\cL$-structure for which $\Th_{\cL}(K)$ is 1-h-minimal. 
    If $\cL_0$ is a countable sublanguage of $\cL$, then there exists a countable language $\cL'$ expanding $\cL_0$ such that $K$ has $\cL'$-structure for which $\Th_{\cL'}(K)$ is 1-h-minimal, and such that every $\cL'$-definable set is also $\cL$-definable.
\end{lem}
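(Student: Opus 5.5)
Since 1-h-minimality is a condition on $\emptyset$-definable families in one variable, we can close $\cL_0$ under a countable list of witnesses. Recall the relevant form of the definition: $\Th_{\cL}(K)$ is 1-h-minimal if for every $\lambda\ge 0$ in $\Gamma$, every finite set $A\subset \RV_\lambda$-sort (or the variant with $\RV_{|m|}$-parameters used in~\cite{CHRV}), every countable set of parameters in $K$, and every $A$-definable $X\subset K$, there is a finite $A$-definable $C\subset K$ such that $X$ is a union of fibres of the map $x\mapsto (\rv_\lambda(x-c))_{c\in C}$. We use the equivalent uniform formulation from~\cite{CHR,CHRV}: for every $\emptyset$-definable family $X_y\subset K$, with $y$ ranging over (products of) $K$ and $\RV$-type sorts, there is a $\emptyset$-definable family of finite sets $C_y$, uniformly bounded in size, such that each $X_y$ is $m$-prepared by $C_y$. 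This is the version obtained after passing to the elementary theory. The point is that it is a first-order scheme indexed by formulas.

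The construction is by a Löwenheim--Skolem-style closure. Set $\cL_0$ as given. Given a countable $\cL_k\subseteq\cL\cup\{\text{new symbols}\}$, we proceed as follows. For each of the countably many $\cL_k$-formulas $\varphi(x,y)$ defining a family of subsets of $K$ with parameters in $K$ and the auxiliary sorts, the $\cL$-theory provides an $\cL$-formula $\psi_\varphi(z,y)$ defining a preparing family of finite sets $C_y$. We add to $\cL_k$ the finitely many $\cL$-symbols occurring in $\psi_\varphi$. Alternatively, we add a single new predicate interpreted as the set $\psi_\varphi(K)$, which is $\cL$-definable without parameters. Let $\cL_{k+1}$ be the result, which is still countable. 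Put $\cL'=\bigcup_k \cL_k$. Then $\cL'$ is countable, every symbol of $\cL'$ is interpreted as an $\emptyset$-$\cL$-definable set, and hence every $\cL'$-definable set is $\cL$-definable. Moreover, every $\cL'$-formula lies in some $\cL_k$, so its preparing family is $\cL_{k+1}$-definable and thus $\cL'$-definable. The preparation property itself, namely ``$C_y$ is finite of size $\le N$ and prepares $X_y$'', is an $\cL'$-sentence true in $K$. Hence it holds in all models of $\Th_{\cL'}(K)$.

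The main obstacle is making sure that this uniform, $\emptyset$-definable, one-sort criterion really is equivalent to 1-h-minimality, so that it is enough to check it for $\emptyset$-definable families. The definition quantifies over parameters from $\RV$-sorts and arbitrary models. We would invoke the characterization in~\cite[Thm.\,2.9.1]{CHRV}, or in~\cite[Prop.\,2.6.?]{CHR} for the equicharacteristic case: 1-h-minimality is equivalent to the preparation statement for $\emptyset$-definable families $X_\xi\subset K$, where $\xi$ ranges over one $\RV_\lambda$-type parameter together with $K$-parameters, with $C$ required to be definable over the $K$-parameters only. This statement is first-order in the parameters, so it transfers to elementary extensions. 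We also need to check that the closure step captures families parametrized by finitely many $\RV_\lambda$ elements. Since such elements are imaginaries of the valued field, we handle this by quantifying over representatives in $K$ and asking that $C$ does not depend on the representative. This again is an $\cL'$-expressible condition, so no further difficulty arises.
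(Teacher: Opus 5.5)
Your proposal is correct and takes essentially the same route as the paper. You iteratively add, for each formula of the current countable language, a predicate (or the finitely many $\cL$-symbols) witnessing uniform preparation \cite[Prop.\,2.3.2]{CHRV}, take the union, and transfer the resulting preparation sentences to every model of $\Th_{\cL'}(K)$ because preparation is first order \cite[Lem.\,2.4.2]{CHR}. One point of precision, which the paper builds in and your final paragraph eventually states: the witnessing formula $\psi(x;y)$ must depend only on the $K$-parameters $y$ and must prepare $\phi(x,\alpha,\xi;y)$ uniformly for all $\alpha\in\RV^k$ and a single $\xi\in\RV_\lambda$; your earlier formulation, with $C_y$ depending on the $\RV$-part of $y$ or with several $\RV_\lambda$-parameters, does not match the definition of 1-h-minimality.
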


\begin{proof}
    Let $\Gamma_K$ be the value group of $K$. 
    For $\lambda\in \Gamma_K^\times, \lambda\geq 0$ let $\RV_\lambda$ be the leading term structure $K^\times/(1+B_{>\lambda}(0)) \cup \{0\}$ and let $\rv_\lambda: K\to \RV_\lambda$ be the natural map (extended by $\rv_\lambda(0) = 0$). 

    We will inductively construct countable languages $\cL_0\subset \cL_1\subset \cL_2, \ldots$ such that $\cL' = \cup_i \cL_i$ works, and such that moreover any $\cL_i$-definable set is also $\cL$-definable. 
    This will be done by witnessing preparation data from the definition of 1-h-minimality. 
    
    Suppose that $\cL_i$ has been constructed already. 
    For each $\cL_i$-formula $\phi(x, \alpha, \beta; y)$ where $x$ is in $K$, $\alpha$ is in $\RV^k$ (for some $k\in \NN$), $\beta$ is in $\cup_{\lambda\geq 0} \RV_\lambda$, $y$ is in $K^n$ (for some $n\in \NN$), we will add a predicate witnessing the uniform preparation of $\phi$. Note that there are only countably many such formulas. By 1-h-minimality in the language $\cL$, uniform preparation~\cite[Prop.\,2.3.2]{CHRV} implies that there is an $\cL$-formula $\psi(x; y)$ such that for each $y\in K^n$, $\psi(K; y)$ is a finite set uniformly preparing $\phi(x, \alpha, \beta; y)$. We now add a predicate for the formula $\psi$ to the language $\cL_i$. Continuing in this way gives a countable language $\cL' = \cup_i \cL_i$, and we claim that it suffices.

    We verify that $K$ is 1-h-minimal as an $\cL'$-structure. So let $K'$ be elementarily equivalent to $K$ in the language $\cL'$, let $\lambda\geq 0$ be in $\Gamma_{K'}$, let $A\subset K'$, let $B\subset \RV_{K'}$, let $\xi\in \RV_{K', \lambda}$, and let $X$ be $\cL'(A\cup B\cup \{\xi\})$-definable. We can assume that $A$ and $B$ are finite. Then there is some integer $i$ such that $X$ is already $\cL_i(A\cup B \cup \{\xi\})$-definable, and so there is an $\cL_i$-formula $\phi(x,\alpha, \beta; y)$ such that $\phi(K', B, \xi; A)$ is equal to $X$. Since preparation is first order by~\cite[Lem.\,2.4.2]{CHR}, our construction shows that there is some $\cL_{i+1}(A)$-definable set which $\lambda$-prepares $X$, as desired.
\end{proof}

\subsection{Weierstrass systems}\label{sec:weierstrass}

We recall some generalities about Weierstrass systems, and refer to~\cite{Lip, LipRob,CL} for more details.

If $R\subset \cO_{\CC_p}$ is a subring, we denote
\[
R\langle \xi \rangle = \left\{\sum_i a_i \xi^i\mid a_i\xrightarrow[|i|\to \infty]{} 0 \right\}
\]
for the ring of \emph{strictly convergent power series}, where $\xi = (\xi_1, \ldots, \xi_m)$. 
Every element of $R\langle \xi\rangle$ is a convergent power series on $\cO_{\CC_p}^m$ and hence naturally defines a function $\cO_{\CC_p}^m\to \CC_p$.
An element $f$ of $\cO_{\CC_p}\langle \xi\rangle$ is called \emph{regular in $\xi_m$ of degree $d$} if 
\[
f = \xi_m^d + a_1\xi_m^{d-1} + \ldots + a_d + g,
\]
where $a_1, \ldots, a_d\in \cO_{\CC_p}$, and $g\in \cO_{\CC_p}\langle \xi\rangle$ has all of its coefficients in $\cM_{\CC_p}$.

A \emph{Weierstrass system} consists of a system of rings $\cA=\{A_m\}_{m\in \NN}$ such that 
\begin{enumerate}
    \item $A_m\subset \cO_{\CC_p}\langle \xi_1, \ldots, \xi_m\rangle$ for every $m$,
    \item $A_{m-1}[\xi_m]\subset A_m$ for every $m\geq 1$,
    \item for $1\leq k\leq m$ and $f\in A_m$, if we write $f = \sum_{i} f_i(\xi_1, \ldots, \xi_k)(\xi_{k+1}, \ldots, \xi_m)^i$, then the $f_i$ are in $A_k$, and
    \item if $f\in A_m$ is regular of degree $d$ in $\xi_m$ then for every $g\in A_m$ there exist unique $q\in A_m$ and $r\in A_{m-1}[\xi_m]$ of degree at most $d-1$ such that $g=qf+r$.
\end{enumerate}
This last property is precisely Weierstrass division.
By~\cite[(3.2)]{LipRob} the full system $A_m = \cO_{\CC_p}\langle \xi_1, \ldots, \xi_m\rangle$ is a Weierstrass system.
Also, the intersection of any number of Weierstrass systems is a Weierstrass system. 
Therefore, if $A\subset \bigsqcup_m \cO_{\CC_p}\langle \xi_1, \ldots, \xi_m\rangle$ is any subset, there is a smallest Weierstrass system $\cA$ containing $A$, which we call the \emph{Weierstrass system generated by $A$}.
More explicitly, $\cA$ is obtained from the elements of $A$ by repeatedly closing under the operations above.

If $\cA = \{A_m\}_{m\in\NN}$ is a Weierstrass system, then define $\cL_{\cA} = \{+,-,\cdot,/,0,1,\cO\}\cup\bigcup_m A_m$.
By interpreting the elements of $A_m$ as function symbols on $\cO_{\CC_p}^m$, $\CC_p$ naturally has $\cL_{\cA}$-structure.
Moreover, by~\cite[Thm.\,4.5.15]{CL} $\CC_p$ has quantifier elimination in $\cL_{\cA}$.

Strictly convergent power series converge on the closed ball $\cO_{\CC_p}^m$, while $\exp$ and $\phi_q$ are defined on open balls.
Therefore, to prove that $\exp$ and $\phi_q$ are definable in a $1$-h-minimal structure on $\CC_p$, we will also need some basic separated Weierstrass system. 
Write $\rho = (\rho_1, \ldots, \rho_n)$.
Following~\cite{Lip}, we then define
\[
S_{m,n} = \CC_p\otimes_{\cO_{\CC_p}}\bigcup_{B\in \cB} B\langle \xi\rangle \llbracket \rho \rrbracket,
\]
where $\cB$ is the collection of all rings of the form
\[
\ZZ_p[a_i\mid i\in \NN]_{\{a\in \ZZ_p[a_i]\,\mid\, v(a)=0\}}^{\wedge},
\]
where $(a_i)_i$ is a sequence in $\cO_{\CC_p}$ converging to $0$, and $\phantom{}^{\wedge}$ denotes the $p$-adic completion.
Note that $\cO_{\CC_p}\langle \xi\rangle$ is contained in $S_{m,0}$.
Every element $f$ of $S_{m,n}$ is a power series, and naturally determines a function
\[
f: \cO_{\CC_p}^m\times \cM_{\CC_p}^n \to \CC_p: (\xi, \rho)\mapsto f(\xi, \rho).
\]
Therefore, $\CC_p$ naturally has $\cL_S$-structure, where $\cL_S = \{+,-,\cdot,/,0,1,\cO\}\cup \bigcup_{m,n} S_{m,n}$.
By~\cite[Thm.\,4.2]{LipRob}, the $\cL_S$-theory of $\CC_p$ has quantifier elimination, while~\cite[Cor.\,6.2.7]{CHR} implies that this structure is $1$-h-minimal.


Using Lemma~\ref{lem:h.min.countable.language} and the above language $\cL_S$, we will find a countable language on $\CC_p$ in which both the exponential and the Tate uniformization map are definable.

\begin{cor}\label{cor:countable.language.Cpexp}
    There exists a countable language $\cL\supset \cL_{\val}$ and $\cL$-structure on $\CC_p$ such that 
    \begin{enumerate}
        \item $\Th_{\cL}(\CC_p)$ is 1-h-minimal,
        \item $\exp: \DD_p\to 1+\DD_p$ is $\cL$-definable, and
        \item the map
        \[
        \Phi: \{(q,u)\in \CC_p^\times \times \CC_p\mid v(q)>v(u)\geq 0\}\to \CC_p^2: (q,u)\mapsto (X(q,u), Y(q,u))  
        \]
        is $\cL$-definable.
    \end{enumerate}
\end{cor}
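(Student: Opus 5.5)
The plan is to work inside the $1$-h-minimal structure $(\CC_p,\cL_S)$ and show that $\exp$ and $\Phi$ are $\cL_S$-definable using only countably many function symbols from $\bigcup_{m,n}S_{m,n}$. Let $\cL_0$ be $\cL_{\val}$ together with these countably many symbols. Then Lemma~\ref{lem:h.min.countable.language} gives a countable $\cL'\supset\cL_0$ with $\Th_{\cL'}(\CC_p)$ $1$-h-minimal. Since $\cL'$ expands $\cL_0$, both maps remain $\cL'$-definable. So everything reduces to exhibiting the finitely (or countably) many power series needed.

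For $\exp$, I would fix $\pi\in\CC_p$ with $v(\pi)=1/(p-1)$, for instance a root of $x^{p-1}=-p$. Then $\DD_p=\pi\cM_{\CC_p}$, and $\exp(\pi\rho)=\sum_n \pi^n\rho^n/n!$. Since $v(\pi^n/n!)=n/(p-1)-v(n!)\geq 0$, this is a power series in $\rho$ with coefficients in $\ZZ_p[\pi]\subset\cO_{\CC_p}$. So $f(\rho)=\exp(\pi\rho)$ lies in $B\llbracket\rho\rrbracket\subset S_{0,1}$ for $B$ the completed ring generated by $\pi$ (taking the sequence $a_0=\pi$ and $a_i=0$ otherwise). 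Then $\exp$ is defined by $y=\exp(x)\iff \exists\rho\in\cM_{\CC_p}\,(x=\pi\rho\wedge y=f(\rho))$, with $\pi$ a definable constant (or added as a constant symbol).

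For $\Phi$, the idea is to split the series $X$ and $Y$ into parts analytic in separated variables. On the domain $v(q)>v(u)\geq0$, write $u=\xi\in\cO_{\CC_p}$ and handle $q$ through the condition $v(q)>v(u)$. The $n\geq1$ terms $q^nu/(1-q^nu)^2$ are fine, since $q^nu\in\cM$. The $n\leq -1$ terms $q^{n}u=(q/u)^{-n}\cdot u^{-n-1}\cdot u^{0}$ are handled via $t=q/u\in\cM$: $q^{-m}u=(u/q)^m u^{1-m}$, and $q^{-m}u/(1-q^{-m}u)^2=q^mu^{-1}/(1-q^mu^{-1})^2=t^m u^{m-1}/(1-t^mu^{m-1})^2$. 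Hence $X(q,u)=\frac{u}{(1-u)^2}+G(u,q,t)$, where $G$ is a power series in $\xi=u$ and in $\rho=(q,t)\in\cM^2$ with integer coefficients, so it lies in $S_{1,2}$. The same works for $Y$, and for $s_1(q)\in S_{0,1}$. The term $u/(1-u)^2$ is a rational function, so it is definable using division (its pole at $u=1$ is irrelevant, as $\Phi$ is only claimed off $q^\ZZ$; I would restrict the domain to $u\notin q^{\ZZ}$, i.e.\ $u\neq1$ there, or note that the paper's domain implicitly excludes it). Then $\Phi$ is defined by $\exists t\in\cM\,(q=tu\wedge \ldots)$, using that $v(q)>v(u)$ gives $t\in\cM$ and that $u$ is a unit-or-smaller element of $\cO$.

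The main obstacle I expect is verifying that these rearranged multi-index series genuinely converge as elements of $\ZZ[[ \rho]]\langle\xi\rangle$-type rings in Lipshitz's sense. Each term expands as a series in $\rho$ whose coefficients are polynomials in $\xi$, and I have to check that for fixed $\rho$-degree only finitely many $\xi$-monomials occur; this holds since $t^mu^{m-1}$ forces $\rho$-degree at least $m$. With integer coefficients we can take $B=\ZZ_p$, which is in $\cB$. A minor technical care point is the case $u=0$ or $v(u)$ close to $v(q)$, which stays inside the open ball of $t$ by strict inequality.
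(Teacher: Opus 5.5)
Your proposal is correct and follows essentially the same route as the paper: rescale $\exp$ by an element of valuation $1/(p-1)$ to land in $S_{0,1}$, rewrite the Tate series as separated power series with integer coefficients in $q$ and $q/u$ (your $n\geq 1$ and $n\leq -1$ pieces are exactly the paper's $F(q,u)$ and $F(q,u^{-1})$ from Silverman's formula), and then apply Lemma~\ref{lem:h.min.countable.language}. The only differences are cosmetic: you keep $u$ as a closed-ball variable instead of using the substitution $F(q,u/q)$, and you feed the power-series symbols themselves, rather than $\exp$ and $\Phi$, into the lemma, which avoids the implicit passage to a definitional expansion of $\cL_S$.
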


\begin{proof}
    Take any $\alpha\in \overline{\ZZ}\cap \CC_p$ with $v(\alpha) = 1/(p-1)$, where $\overline{\ZZ}\subset \overline{\QQ}$ is the set of algebraic integers. 
    Let $L$ be a finite extension of $\QQ_p$ containing $\alpha$, with ring of integers $\cO_L$.
    Then all coefficients of the power series
    \[
    f:=\sum_{n} \frac{\alpha^n}{n!}x^n
    \]
    are in the ring $\cO_L\subset \cO_{\CC_p}$.
    Since this ring is finitely generated over $\ZZ_p$, this power series is in $S_{0,1}$.
    Now simply note that for $x\in \DD$ we have $\exp(x) = f(x / \alpha)$ and so $\exp$ is $\cL_S$-definable.
    
    We show that also Tate uniformization is $\cL_S$-definable. 
    By~\cite[p.\,426]{silverman}, we have that if $-v(q)<v(u)<v(q)$ then
    \[
    X(q,u) = \frac{u}{(1-u)^2} + \sum_{d\geq 1}\left(\sum_{m\mid d}m(u^m+u^{-m}-2) \right)q^d.
    \]
    Define the power series
    \[
    F(q,u) = \sum_{d\geq 1}\left(\sum_{m\mid d}m u^m\right)q^d.
    \]
    To show that $X$ is $\cL_S$-definable on the specified domain, it is then enough to show that $(q,u)\mapsto F(q,u)$ is $\cL_S$-definable on the same domain. Now note that $F(q,u/q)$ is a power series in $q,u$ with coefficients in $\ZZ$, and so it is contained in $S_{0,2}$. But then the map $(q,u)\mapsto F(q,u)$ is $\cL_S$-definable on the domain $\{(q,u) \mid v(q)>0, v(u)>-v(q)\}$. A similar argument applied to $F(q,u^{-1})$ yields that $(q,u)\mapsto F(q,u)$ is $\cL_S$-definable on $\{(q,u)\mid v(q)>0, v(q)>v(u)>-v(q)\}$. The $\cL_U$-definability of $Y(q,u)$ follows from a similar argument, using that for $v(q)>v(u)>-v(q)$ we have that
    \[
    Y(q,u) = \frac{u^2}{(1-u)^3} + \sum_{d\geq 1}\left(\sum_{m\mid d}\frac{(m-1)m}{2}u^m - \frac{m(m+1)}{2}u^{-m}+m\right) q^d.
    \]
    We conclude that $\Phi$ is $\cL_S$-definable.

    To finish the proof, apply Lemma~\ref{lem:h.min.countable.language} to the language $\cL_0 = \cL_{\val}\cup\{\exp, \Phi\}$.
\end{proof}

If one wants to focus on a single Tate uniformization $\phi_q$ for some $q\in \CC_p$, then one can simply add a symbol for $q$ to the language $\cL$ of the above lemma.
Indeed, adding constant symbols preserves 1-h-minimality, and $\phi_q$ is clearly definable on its fundamental domain in the expanded structure.



\section{Existence of Derivations}
\label{sec:derivations}
The goal of this section is to show that $1$-h-minimal fields admit many non-trivial derivations and to prove Theorem~\ref{thm:derivations}. 
For this we adapt a method used in \cite{bays-kirby-wilkie} for building derivations on $\CC$ respecting the usual complex exponential function that is based on the o-minimality of $(\RR,\exp)$. The existence of derivations on h-minimal fields may be useful in other contexts.

\begin{defn}
    Let $F$ be a field. 
    A \emph{derivation on $F$} is a function $\partial:F\to F$ that satisfies the following properties:
    \begin{enumerate}
        \item For all $a,b\in F$ we have $\partial(a+b) = \partial(a) + \partial(b)$.
        \item For all $a,b\in F$ we have $\partial(ab) = a\partial(b) + b\partial(a)$. 
    \end{enumerate}
    A \emph{differential field} is a pair $(F,\Delta)$, where $
    \Delta$ is a set of derivations on $F$.

    The set $\bigcap_{\partial\in\Delta}\ker \partial$ is called the \emph{field of constants}.
\end{defn}

\begin{defn}
    Let $(F, \Delta)$  be a differential field of characteristic zero. 
    Suppose that $F$ is a topological field, let $U\subset F^n$ be an open subset, and let $f:U\to F$ be a differentiable function. 
    We say that $\partial\in\Delta$ \emph{respects $f$} if for every $x\in U$ we have 
    \begin{equation*}
        \partial(f(x)) = \sum_{i=1}^n (\partial_i f)(x)\partial x_i,
    \end{equation*}
    where $\partial_i f$ denote the partial derivative of $f$ with respect to the $i$-th variable. 
\end{defn}

Of course, the zero derivation (the trivial derivation) respects any function. 
With this terminology, Theorem~\ref{thm:derivations} may be restated as follows.

\begin{thm}[Existence of non-trivial derivations]\label{thm:derivations.text}
Let $\cT$ be a $1$-h-minimal theory and let $K$ be a model of $\cT$. 
Then there exists a collection of commuting derivations $\Delta=(\partial_\tau)_{\tau\in \cB}$ on $K$ with the following properties:
\begin{enumerate}
    \item \label{it:derivations.respect} if $U\subset K^n$ is open and $h: U\to K$ is a $\emptyset$-definable $C^1$ function, then every $\partial_\tau$ respects $h$, and
    \item \label{it:kernel.derivations} $\bigcap_\tau \operatorname{ker} \partial_\tau = \operatorname{acl}(\emptyset)$.
\end{enumerate}
\end{thm}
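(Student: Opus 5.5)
We adapt the construction of \cite{bays-kirby-wilkie}. After passing to the expansion by algebraic Skolem functions from Section~\ref{sec:hmin}, we may assume $\acl=\dcl$ is a pregeometry; this does not change $\acl(\emptyset)$. The expansion also only enlarges the class of $\emptyset$-definable $C^1$ functions, so proving (1) for the larger language suffices.

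Let $\cB$ be an $\acl$-basis of $K$ over $\emptyset$. Every $a\in K$ lies in $\dcl(\tau_1,\dots,\tau_k)$ for finitely many $\tau_i\in\cB$. Hence $a=g(\tau_1,\dots,\tau_k)$ for some $\emptyset$-definable function $g$ defined on a $\emptyset$-definable set containing $(\tau_1,\dots,\tau_k)$. Since the $\tau_i$ are $\acl$-independent, the tuple is generic. The multivariable analogue of the Jacobian property (the $C^1$-on-an-open-dense-set result for definable functions in 1-h-minimal fields, \cite{CHRV}) then shows that $g$ is $C^1$ on a $\emptyset$-definable open neighbourhood of $(\tau_1,\dots,\tau_k)$. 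The reason is that the bad set is $\emptyset$-definable of dimension $<k$, so it cannot contain a generic point.

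We define $\partial_\tau(a)=(\partial_{\tau} g)(\tau_1,\dots,\tau_k)$, the partial derivative of $g$ in the $\tau$-variable, taken to be $0$ if $\tau$ is not among the $\tau_i$. Five things must be checked.
\begin{enumerate}
\item \emph{Well-definedness.} Suppose $g(\bar\tau)=g'(\bar\tau)$. Then the $\emptyset$-definable set $\{g=g'\}$ contains the generic point $\bar\tau$, so it has full dimension near $\bar\tau$ and contains an open neighbourhood of it. Hence the partial derivatives agree there. Enlarging the tuple of basis elements is harmless, since $g$ then does not depend on the extra variables.
\item \emph{Derivation axioms.} If $a=g(\bar\tau)$ and $b=g'(\bar\tau)$, then $a+b$ and $ab$ are given by $g+g'$ and $gg'$. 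The sum and Leibniz rules follow from the usual rules for $C^1$ functions.
\item \emph{Commutation.} Apply the same argument to the $C^2$ locus. Definable functions are $C^2$ off a lower-dimensional $\emptyset$-definable set, by iterating the $C^1$ result on the derivative, which is again definable. Then use equality of mixed partials for $C^2$ functions in this non-Archimedean setting.
\item \emph{Property (1).} Let $a\in U$ with $a_i=g_i(\bar\tau)$. Then $h(a)=(h\circ\bar g)(\bar\tau)$, and the chain rule for $C^1$ maps gives the formula.
\item \emph{Property (2).} If $a\in\acl(\emptyset)$, take $g$ constant, so every $\partial_\tau a=0$. Conversely, if $a\notin\acl(\emptyset)$, write $a=g(\bar\tau)$ with the tuple minimal. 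Suppose all partials of $g$ vanish at $\bar\tau$. By genericity they then vanish on the whole $\emptyset$-definable set where $g$ is $C^1$ with zero derivative, which contains a neighbourhood $N$ of $\bar\tau$ that we may take $\emptyset$-definable. So $g$ is locally constant on $N$. A definable locally constant function on a $\emptyset$-definable set has only finitely many values on a small enough $\emptyset$-definable piece containing the generic point; this follows from the dimension theory, since the fibres have full dimension. It would follow that $a\in\acl(\emptyset)$, a contradiction.
\end{enumerate}

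The main obstacle is the analytic input in several variables: generic $C^1$/$C^2$ smoothness of definable functions, and the chain rule and Schwarz's theorem for strictly differentiable maps. Item 5 is the delicate one, since local constancy does not give finiteness in general. The plan is to argue via dimension: the image $g(N\cap(\text{component}))$ is $\emptyset$-definable. If it were infinite, the fibres would have dimension $<k$, and the derivative would then be generically nonzero by the Jacobian property applied along curves. Finiteness then puts $a$ in $\acl(\emptyset)$.
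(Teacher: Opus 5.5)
Your construction is the paper's, written in several variables. Writing $a=g(\bar\tau)$ and taking the $\tau$-partial derivative is the same as the paper's $f'(\tau)$, where $f$ is the one-variable $\cB_\tau$-definable function obtained by freezing the other coordinates of $g$. Your items 1--4 are sound and use the same inputs as the paper: generic $C^1$/$C^2$ smoothness, the chain rule, and symmetry of mixed partials.

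The gap is in item 5, at ``so $g$ is locally constant on $N$.'' In the non-Archimedean setting a vanishing derivative, even a strict one, does not give local constancy. For example, the map $\sum a_ip^i\mapsto\sum a_ip^{2i}$ on $\ZZ_p$ satisfies $|f(x)-f(y)|=|x-y|^2$. It is therefore strictly differentiable with derivative $0$ everywhere, yet it is injective. For definable functions this implication has to come from the Jacobian property, which you never apply at this point.

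Your closing paragraph also puts the difficulty in the wrong place. Once $g$ is locally constant on an open $N\subseteq K^k$, its fibres are open and hence $k$-dimensional, so additivity of dimension immediately makes $g(N)$ finite. The step that needs work is ``zero gradient implies locally constant.'' Your phrase ``the Jacobian property applied along curves'' names exactly the argument you have not carried out.

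The fix is the paper's one-variable route. For each $i$, let $f_i(x)=g(\tau_1,\ldots,\tau_{i-1},x,\tau_{i+1},\ldots,\tau_k)$. This is definable over the remaining $\tau_j$, and $f_i'(\tau_i)=0$. By the Jacobian property there are a finite set $C_i$ (over the same parameters) and an integer $m$ such that $v(f_i')$ is constant on each ball $m$-next to $C_i$. So every such ball meeting the zero-derivative locus $Z_i$ lies in $Z_i$, and $f_i$ is constant on it. Hence every nonempty fibre of $f_i$ on $Z_i\setminus C_i$ is infinite, and dimension theory forces $f_i(Z_i)$ to be finite. This gives $a\in\acl(\tau_j : j\neq i)$ for every $i$. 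Since $\bar\tau$ is $\acl$-independent, the intersection of these sets is $\acl(\emptyset)$.
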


For the proof, we will use the basic properties of h-minimal theories as recalled in Section~\ref{sec:hmin}.

\begin{proof}
First, we may expand our language to $\cL'\supset \cL$ by an $\RV$-expansion such that $\acl = \dcl$ in $\Th_{\cL'}(K)$ and such that $K$ is $1$-h-minimal. Since $\acl$ is a pregeometry, and since $\acl=\dcl$, also $\dcl$ is a pregeometry on $K$. Let $\cB$ be a basis for the pregeometry $\dcl$. For $\tau\in \cB$ denote $\cB_\tau = \cB\setminus \{\tau\}$. For each $\tau\in \cB$, we will define a derivation $\partial_\tau$ on $K$ in the following way: for $a\in K$ take an $\cL'(B_\tau)$-definable function $f: X\subset K\to K$ for which $\tau\in X$ and $f(\tau) = a$, and we put $\partial_\tau(a) = f'(\tau)$. We claim that 
\begin{enumerate}
    \item such an $f$ always exists,
    \item that it is differentiable at $\tau$,
    \item that $\partial_\tau$ is independent of the choice of $f$, and
    \item that the resulting map $\partial_\tau$ is a derivation with the above properties.
\end{enumerate}

First let us show that such an $f$ exists. Since $\cB$ is a basis for the $\dcl$-pregeometry, there exists an $\cL'$-formula $\phi(x, y, z)$ and a tuple $v\in \cB_\tau^n$ such that $\phi(x,\tau, v)$ has a unique solution in $K$ which is $a$. Let $Y$ be the set of $y\in K$ such that there exists a unique $x$ satisfying $\phi(x,y, v)$. Then $Y$ is an $\cL'(\cB_\tau)$-definable set containing $\tau$. On $Y$, $\phi$ determines an $\cL'(\cB_\tau)$-definable function $f$ by mapping $y\in Y$ to the unique solution of $\phi(x, y, v)$. Note that $f(\tau) = a$. This proves the existence of such a function $f$.

Let us check that any such $f$ is differentiable at $\tau$. Using $1$-h-minimality, let $C\subset K$ be a finite $\cL'(\cB_\tau)$-definable set and let $m$ be an integer such that $f$ is $C^1$ on balls $|m|$-next to $C$. Then $\tau$ cannot be in $C$, since then $\tau$ would be in a finite $\cL'(\cB_\tau)$-definable set, which is impossible since $\cB$ is a $\dcl$-basis and $\acl_{\cL'}=\dcl_{\cL'}$ in $K$. So $f$ is differentiable at $\tau$, and hence it makes sense to define $\partial_\tau(a) = f'(\tau)$.

Next, we show that $f'(a)$ is independent of the choice of $f$. Suppose $g: Y'\to K$ is another $\cL'(\cB_\tau)$-definable function with $\tau\in Y'$ and $g(\tau) = a$. Then as above $g$ is differentiable at $\tau$. Let $C'$ be a finite $\cL'(\cB_\tau)$-definable set and $m$ be a positive integer such that $C$ $|m|$-prepares the $\cL'(\cB_\tau)$-definable set
\[
Z = \{y\in Y\cap Y'\mid f(y) = g(y)\}.
\]
Note that $\tau \in Z$. For the same reason as before, $\tau$ cannot be in $C'$, so there is an open ball containing $\tau$ which is completely contained in $Z$. But then $f$ and $g$ agree on a neighbourhood of $\tau$, so that $f'(\tau) = g'(\tau)$.

We prove that the $\partial_\tau$ commute.
So take $\tau\neq\sigma$ in $\cB$ and $a\in K$.
Write $\cB_{\tau, \sigma} = \cB\setminus \{\tau, \sigma\}$.
Then there exists an $\cL'(\cB_{\tau, \sigma})$-definable function $f: X\subset K^2\to K$ with $f(\tau, \sigma) = a$.
We claim that $(\tau, \sigma)$ is contained in the interior of $X$.
Suppose this is not the case.
Then $(\tau, \sigma)$ is contained in $\overline{(K^2\setminus X)}$ but not in $K^2\setminus X$.
But then~\cite[Prop.\,3.1.1(3.f)]{CHRV} shows that $(\tau, \sigma)$ is contained in a $1$-dimensional $\cL'(\cB_{\sigma, \tau})$-definable set, contradicting the fact that $\cB$ is a basis for the $\dcl$-pregeometry.

Let $Y\subset X$ be the set of $x\in X$ at which $f$ is not $C^2$, which is an $\cL'(\cB_{\tau, \sigma})$-definable set. 
Then~\cite[Prop.\,3.1.1]{CHRV} shows that $Y$ has dimension at most $1$, and a similar argument as above shows that $(\tau, \sigma)\notin Y$.
Therefore $f$ is $C^2$ at $(\tau, \sigma)$ and a straight-forward computation shows that
\begin{align*}
    \partial_\tau(\partial_\sigma(a)) =  (\partial_1 \partial_2 f)(\tau, \sigma)= (\partial_2 \partial_1 f)(\tau, \sigma) = \partial_\sigma(\partial_\tau(a)).
\end{align*}

We now check properties~(\ref{it:derivations.respect}) and~(\ref{it:kernel.derivations}) of the theorem. So let $h: U\to K$ be a $\emptyset$-definable $C^1$ function on an open $U\subset K^n$ and let $a=(a_1, \ldots, a_n)\in U$. Let $f_i: X_i\subset K\to K$ be $\cB_\tau$-definable with $\tau\in X$ and $f(\tau) = a_i$, for $i=1, \ldots, n$. 
Then the function
\[
g: K\to K: t\mapsto \begin{cases}
    h(f_1(t), \ldots, f_n(t)) & \text{ if } t\in \bigcap_i X_i, \\
    0 & \text{ else}
\end{cases}
\]
is $\cL'(\cB_\tau)$-definable and satisfies $g(\tau) = h(a)$.
Hence, by our construction
\[
\partial_\tau (h(a)) = g'(\tau) = \sum_{i=1}^n (\partial_i h)(a)\partial_\tau(a).
\]

For the final property, first take $a$ in $\acl_{\cL}(\emptyset) = \dcl_{\cL'}(\emptyset)$. Then we can simply take $f = a$ a constant function, and so $\partial_\tau(a) = 0$. Conversely, let $a$ be in $\ker \partial_\tau$, we show that $a$ is in $\dcl_{\cL'}(\cB_\tau) = \acl_{\cL}(\cB_\tau)$. We have an $\cL'(\cB_\tau)$-definable function $f: X\subset K\to K$ defined at $\tau$ with $f(\tau) = a$ and such that $f'(\tau) = \partial_\tau(a) = 0$. The set of $x\in X$ with $f'(x) = 0$ is an $\cL'(\cB_\tau)$-definable set, and hence so is its image under $f$. By dimension theory, this image is a finite set. But $a$ is contained in this image, which is a finite $\cL'(\cB_\tau)$-definable set, and so $a\in \acl_{\cL'}(\cB_\tau) = \acl_{\cL}(\cB_\tau)$.
\end{proof}

\begin{cor}\label{cor:derivation.Cp}
    Let $t_1, \ldots, t_n$ be in $\CC_p$ and let $q$ be in $\CC_p$ with $v(q) > 0$. Then there exists a non-trivial derivation $\partial$ on $\CC_p$ respecting $\exp$ and $\phi_q$ for which $\partial t_i = 0$ for all $i$.
\end{cor}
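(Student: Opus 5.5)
The plan is to apply Theorem~\ref{thm:derivations.text} to a suitable expansion of the structure from Corollary~\ref{cor:countable.language.Cpexp}. Let $\cL$ be the countable language of Corollary~\ref{cor:countable.language.Cpexp}, and let $\cL^+ = \cL\cup\{c_q, c_1,\ldots,c_n\}$ be its expansion by constant symbols interpreted as $q,t_1,\ldots,t_n$. Adding constants preserves $1$-h-minimality, so $\cT = \Th_{\cL^+}(\CC_p)$ is $1$-h-minimal with $K=\CC_p$ as a model. In $\cL^+$ the map $\exp:\DD_p\to 1+\DD_p$ is $\emptyset$-definable. The restriction of $\phi_q$ to the fundamental domain $\{u\mid 0\le v(u)<v(q)\}$ is also $\emptyset$-definable, being the section $u\mapsto\Phi(q,u)$. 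Both are analytic, hence $C^1$, on open domains. Theorem~\ref{thm:derivations.text} then produces commuting derivations $\partial_\tau$, $\tau\in\cB$, that respect these functions. Moreover $\bigcap_\tau\ker\partial_\tau=\acl_{\cL^+}(\emptyset)$, and this set contains $q$ and all $t_i$. Since every $\partial_\tau$ kills $\acl(\emptyset)$, each $\partial_\tau$ satisfies $\partial_\tau t_i=0$.

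For non-triviality it is enough that $\acl_{\cL^+}(\emptyset)\neq\CC_p$. Because $\cL^+$ is countable, $\acl_{\cL^+}(\emptyset)$ is countable, while $\CC_p$ is uncountable. So the intersection of the kernels is a proper subset, hence some $\partial_\tau$ is nonzero, and we take $\partial=\partial_\tau$. The countability of the language supplied by Lemma~\ref{lem:h.min.countable.language} is used exactly here.

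The main obstacle is making precise that $\partial$ \emph{respects} $\phi_q$ as a map $\CC_p^\times\to E_q\subset\mathbb{P}^2$. The theorem only gives this on the fundamental domain, where $\phi_q$ is given by the coordinate functions $X(q,\cdot),Y(q,\cdot)$. Take an arbitrary $u\in\CC_p^\times\setminus q^\ZZ$ and write $u=q^ku_0$ with $u_0$ in the fundamental domain. Periodicity gives $X(q,u)=X(q,u_0)$. Since $q\in\ker\partial$, we get $\partial u=q^k\partial u_0$. The chain rule for the derivative of $X(q,\cdot)$ gives $X'(q,u)=q^{-k}X'(q,u_0)$. Hence
\[
\partial X(q,u)=X'(q,u_0)\,\partial u_0=X'(q,u)\,\partial u,
\]
and the same computation works for $Y$. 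I would also check that the fundamental domain is open, so that the theorem applies to it directly, and handle the boundary values $v(u)=0$ by translating instead by a suitable power of $q$. Finally, for $\exp$ nothing beyond the theorem is needed.
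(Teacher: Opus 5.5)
Your proposal is correct and takes the same approach as the paper: the paper's one-line proof applies Theorem~\ref{thm:derivations.text} to $\CC_p$ in the countable language of Corollary~\ref{cor:countable.language.Cpexp}, expanded by constants for $q,t_1,\ldots,t_n$. Your countability argument for non-triviality and your periodicity argument, which extends ``respects $\phi_q$'' from the fundamental domain to all of $\CC_p^\times\setminus q^{\ZZ}$, are correct details that the paper leaves implicit; the boundary contingency is unnecessary, since closed balls are open in $\CC_p$ and so $\{u : 0\le v(u)<v(q)\}$ minus the pole $u=1$ is already open (translating by powers of $q$ would not have helped there anyway).
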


\begin{proof}
    Apply the previous theorem to $\CC_p$ equipped with $\cL(q, t_1, \ldots, t_n)$-structure, where $\cL$ is the countable language from Corollary~\ref{cor:countable.language.Cpexp}. 
\end{proof}

In \cite{kirby-expalg} Kirby defines and studies \emph{Khovanskii systems} of exponential polynomials over any exponential field (even when the exponential is not defined over the whole field). 
One of the main results of \cite{kirby-expalg} is that a notion of exponential closure defined in terms of these Khovanskii systems agree with the notion of closure coming from derivations on the field respecting the exponential. 
Therefore, we immediately get the following. 

\begin{cor}[see {{\cite[Theorem 1.1]{kirby-expalg}}}]
    The pregeometry of derivations agrees with the pregeometry of Khovanskii systems.
\end{cor}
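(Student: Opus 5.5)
The corollary is essentially a translation of \cite[Theorem 1.1]{kirby-expalg} into our setting, so the plan is to check that $(\CC_p,\exp)$ is an exponential field in Kirby's sense and that the two closure operators being compared are exactly the ones his theorem treats. I will work with the partial exponential field $F=(\CC_p,\exp)$, where $\exp\colon \DD_p\to 1+\DD_p$ is as in \S\ref{subsec:pexp}. The first step is to verify Kirby's axioms for a partial $E$-field. The domain $\DD_p$ is an additive subgroup of $\CC_p$, being an open ball around $0$ in an ultrametric field. The identity $\exp(x+y)=\exp(x)\exp(y)$ from \S\ref{subsec:pexp} says that $\exp$ is a group homomorphism $(\DD_p,+)\to(\CC_p^\times,\cdot)$.

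If Kirby's setup demands a domain that is a $\QQ$-linear subspace, this will fail for $\DD_p$, which is not divisible by $p$. In that case I would either pass to the $\ZZ$-module version of his framework (which I believe his paper allows), or replace $\DD_p$ by the $\QQ$-span of the subgroup $\DD_p$. Note that $\exp$ does not naturally extend to that span, so only the first option is really available. Checking which hypothesis on the domain Kirby actually uses is the step I expect to be the main obstacle.

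Next I will identify the two notions of derivation. For a subset $A\subset\CC_p$, the pregeometry of derivations is
\[
\mathrm{cl}(A)=\{a\in\CC_p\mid \partial a=0 \text{ for every derivation } \partial \text{ on } \CC_p \text{ respecting } \exp \text{ with } \partial|_A=0\}.
\]
Since $\exp'=\exp$, a derivation $\partial$ respects $\exp$ in the sense of our definition exactly when $\partial(\exp x)=\exp(x)\,\partial x$ for all $x\in\DD_p$. This is precisely Kirby's algebraic notion of an exponential derivation on a partial $E$-field. Hence $\mathrm{cl}$ coincides with the derivation closure appearing in \cite[Theorem 1.1]{kirby-expalg}, and that theorem gives $\mathrm{cl}=\mathrm{ecl}$, the closure operator defined via Khovanskii systems. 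In particular $\mathrm{cl}$ is a pregeometry, since $\mathrm{ecl}$ is one by Kirby's work.

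Finally, I will use our results to show that the statement has content, namely that the pregeometry is nontrivial. By Corollary~\ref{cor:derivation.Cp}, for any finitely many $t_1,\dots,t_n\in\CC_p$ there is a nonzero derivation respecting $\exp$ that vanishes on all $t_i$. Therefore $\mathrm{cl}(\{t_1,\dots,t_n\})\neq\CC_p$. Likewise Theorem~\ref{thm:derivations.text} gives $\mathrm{cl}(\emptyset)\subseteq\acl(\emptyset)$, which is countable for the countable language of Corollary~\ref{cor:countable.language.Cpexp}. So $\mathrm{ecl}$ on $\CC_p$ has infinite dimension, and finite sets have countable closure.
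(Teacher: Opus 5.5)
Your plan matches the paper's: the corollary is obtained by directly invoking \cite[Theorem 1.1]{kirby-expalg} for $(\CC_p,\exp)$ with $\exp$ defined only on $\DD_p$. The paper simply asserts that Kirby's framework covers exponentials not defined on the whole field, so the domain question you flag is left implicit there too; your nontriviality paragraph (via Theorem~\ref{thm:derivations.text} and Corollary~\ref{cor:derivation.Cp}) is correct but is not part of the paper's argument.
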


On $\QQ_p$ and $\CC_p$, there can only be countably many zeros of a system of analytic equations, so one can use \cite[Remark 3.4 and Proposition 7.2]{kirby-expalg} (as well as some fixes provided in \cite{aek-differentialEC,aek-closureoperator}) to recover the existence of non-trivial derivations respecting $\exp$ through purely differential algebraic methods, without using 1-h-minimality.
However, these methods depends on Ax's famous functional transcendence result for exponentiation (which we will recall in \S\ref{subsec:p-adicschanuel}).
Such functional transcendence statements are not known for arbitrary functions definable in 1-h-minimal structures.

\subsection{Local definability}
\label{subsec:localdefinability}
Suppose we expand $\CC_p$ with a collection of analytic functions $\mathcal{F}$ so that the corresponding structure defines a 1-h-minimial theory. 
A natural question is: what other analytic functions are definable in $(\CC_p,\mathcal{F})$?
Following Wilkie in \cite{wilkie:holo}, we will answer this question at \emph{generic} points in Theorem \ref{thm:localdef}. 
We remark that further study about definability in the complex o-minimal setting at non-generic points was done in \cite{jkgs}. 

A function $F: U\subset \CC_p^n\to \CC_p$ on an open set $U$ is said to be \emph{analytic} if for every $a\in U$ there exists a closed box around $a$ such that on this box $F$ is described by a strictly convergent power series.

Given an analytic function $F$, we let $\mathrm{dom}(F)\subseteq\CC_p^n$ denote the domain of $F$, which is always an open subset of $\CC_p^n$. 

\begin{defn}
    Given an analytic function $F$, a \emph{proper restriction of $F$} is the restriction of $F$ a set $\Delta\subseteq\mathrm{dom}(F)$ of the form $\Delta = D_1\times\cdots\times D_n$, where each $D_i$ is a closed ball in $\CC_p$ whose centre is in $\overline{\QQ}$, so that $F\upharpoonright_{\Delta}$ can be expressed as a strictly convergent power series. 
    In this case we say that $\Delta$ is \emph{suitable} for $F$. 
\end{defn}

Given a collection $\mathcal{F}$ of analytic functions, we denote by $\cF^{\PR}$ the set of all proper restrictions of elements of $\cF$, and by $\left(\CC_p, \mathcal{F}^{\PR}\right)$ the corresponding expansion of $\CC_p$.
If $G$ is a proper restriction of $F\in \cF$ with domain $\Delta = D_1\times \ldots \times D_m$, then by definition there exist $a,b\in \overline{\QQ}^m$ and $c\in \overline{\QQ}$ such that the map
\[
(\xi_1, \ldots, \xi_m)\mapsto cF(a_1x_1 + b_1, \ldots, a_mx_m + b_m)
\]
is an element of $\cO_{\CC_p}\langle \xi\rangle$.
For each $m\in \NN$, let $A_m\subset \cO_{\CC_p}\langle \xi\rangle$ be all strictly convergent power series which can be obtained in this way.
Then there is a smallest Weierstrass system containing all of the $A_m$, which we denote by $\cA_{\cF}$.
In view of the discussion in Section~\ref{sec:weierstrass}, $(\CC_p, \cL_{\cA_\cF})$ has quantifier elimination, and is an expansion by definition of $(\CC_p, \cF^{\PR})$.

\begin{defn}
    Given a collection $\mathcal{F}$ of analytic functions and another analytic function $F$, we say that $F$ is \emph{locally definable from $\mathcal{F}$} if every proper restriction of $F$ is definable in $\left(\CC_p, \mathcal{F}^{PR}\right)$. 

    Equivalently, $F$ is locally definable from $\mathcal{F}$ if and only if for every $\mathbf{w}\in\mathrm{dom}(F)$ there is  $\Delta$ suitable for $F$ such that $\mathbf{w}\in\Delta$ and $F\upharpoonright_\Delta$ is definable in $\left(\CC_p, \mathcal{F}^{PR}\right)$
\end{defn}

\begin{defn}
    Given open sets $U\subseteq\CC_p^{n+1}$ and $V\subseteq\CC_p^n$ and analytic functions $F:U\to\CC_p$ and $f:V\to\CC_p$, we say that $f$ is \emph{implicitly defined} from $F$ if for all $\mathbf{w}\in V$ we have $(\mathbf{w},f(\mathbf{w}))\in U$ and $F(\mathbf{w},f(\mathbf{w}))=0\neq\frac{\partial F}{\partial z_{n+1}}(\mathbf{w},f(\mathbf{w}))$. 
\end{defn}

Given a collection $\mathcal{F}$ of analytic functions on $\CC_p$ which is closed under differentiation, we let $\widetilde{\mathcal{F}}$ denote the smallest collection of functions containing $\mathcal{F}$ and which is closed under composition and implicit definability. 
We note that $\cA_{\widetilde{\cF}} = \cA_{\cF}$, since Weierstrass systems are automatically closed under composition and implicit definition (see~\cite[Rem.\,4.5.2]{CL}).

\begin{defn}
    Given a collection $\mathcal{F}$ of analytic functions, we let $\mathrm{Der}_{\CC_p}(\mathcal{F})$ denote the set of all field derivations $\delta:\CC_p\to\CC_p$ which respect every function in $\mathcal{F}$ at every point of their corresponding domain. 
\end{defn}

\begin{lem}[{{cf.~\cite[Lemma 3.3]{wilkie:holo}}}]
\label{lem:derivationspaces}
For every collection $\mathcal{F}$ of analytic functions on $\CC_p$ which is closed under differentiation we have $\mathrm{Der}_{\CC_p}(\mathcal{F}) = \mathrm{Der}_{\CC_p}\left(\widetilde{\mathcal{F}}\right) = \mathrm{Der}_{\CC_p}\left(\cA_\cF\right)$. 
\end{lem}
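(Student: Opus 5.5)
Since $\mathcal{F}\subseteq\widetilde{\mathcal{F}}$, we have $\mathrm{Der}_{\CC_p}(\widetilde{\mathcal{F}})\subseteq\mathrm{Der}_{\CC_p}(\mathcal{F})$. For $\cA_\cF$ I will use the convention that a derivation respects $f\in A_m$ when it respects the function $\xi\mapsto f(\xi)$ on $\cO_{\CC_p}^m$. The plan is to prove the cycle
\[
\mathrm{Der}_{\CC_p}(\mathcal{F})\subseteq\mathrm{Der}_{\CC_p}(\widetilde{\mathcal{F}}),\qquad
\mathrm{Der}_{\CC_p}(\widetilde{\mathcal{F}})\subseteq \mathrm{Der}_{\CC_p}(\cA_\cF),\qquad
\mathrm{Der}_{\CC_p}(\cA_\cF)\subseteq\mathrm{Der}_{\CC_p}(\mathcal{F}).
\]

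For the first inclusion, I show that the class of functions respected by a fixed $\delta$ is closed under composition and implicit definition. Composition is the chain rule: if $\delta$ respects $g_1,\dots,g_n$ and $h$, then
\[
\delta h(g(\mathbf{w}))=\sum_i \partial_i h(g(\mathbf{w}))\,\delta g_i(\mathbf{w})=\sum_{i,j}\partial_ih\,\partial_jg_i\,\delta w_j,
\]
which is the derivative of $h\circ g$ applied to $\delta\mathbf{w}$. For implicit definition, suppose $F(\mathbf{w},f(\mathbf{w}))=0$ and $\delta$ respects $F$. Applying $\delta$ gives
\[
0=\sum_{i\le n}\partial_iF\,\delta w_i+\partial_{n+1}F\,\delta f(\mathbf{w}).
\]
Since $\partial_{n+1}F\neq 0$, we can solve for $\delta f(\mathbf{w})$. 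By the analytic implicit function theorem, $\partial_i f=-\partial_iF/\partial_{n+1}F$, so the solved expression is exactly $\sum_i \partial_if\,\delta w_i$. Closure under differentiation of $\mathcal{F}$ is not even needed for this step.

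For the third inclusion, take $F\in\mathcal{F}$ and $\mathbf{w}\in\mathrm{dom}(F)$. Choose a suitable box $\Delta\ni\mathbf{w}$ centred at a point of $\overline{\QQ}^m$, and the affine rescaling with $a,b\in\overline{\QQ}^m$, $c\in\overline{\QQ}$, so that $G(\xi)=cF(a\xi+b)$ lies in $A_m$. Every derivation kills $\overline{\QQ}$, because it kills $\QQ$ and $\overline{\QQ}$ is algebraic over it. Hence $\delta$ commutes with these affine maps, and respecting $G$ at $\xi=(\mathbf{w}-b)/a$ is equivalent to respecting $F$ at $\mathbf{w}$.

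The second inclusion is the main obstacle. The affine argument above shows that $\delta\in\mathrm{Der}(\widetilde{\mathcal F})$ respects every element of $\bigcup_m A_m$, including the rescaled derivatives, since $\mathcal{F}$ is closed under differentiation. It remains to show that the set of power series respected by $\delta$ contains $\bigcup_m A_m$ and is closed under the operations generating $\cA_\cF$.
\begin{enumerate}
\item Ring operations and adjoining polynomials: these are immediate from the Leibniz rule.
\item Taking coefficients $f_i$ in a partial expansion: I would express $f_i$ through derivatives in the last variables evaluated at $0$. The derivative is respected by closure under differentiation, and evaluation at constants $0$ is a composition.
\item Weierstrass division $g=qf+r$: the remainder $r$ is polynomial in $\xi_m$ with coefficients determined implicitly by $g$ and $f$ at the $d$ roots of $f$ in $\xi_m$, and $q=(g-r)/f$ away from the zeros of $f$.
\end{enumerate}
I would use \cite[Rem.\,4.5.2]{CL}, which the paper cites for closure of Weierstrass systems under composition and implicit definition, in the converse direction. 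The idea is to show that each new element of $\cA_\cF$ is, locally at generic points, obtained by composition and implicit definition from earlier elements and their derivatives. Derivations are then respected at generic points, and respect extends to all points by continuity of the partial derivatives together with density. I expect this last extension to be the genuinely delicate point, because derivations are not continuous. I would therefore try instead to handle zeros of $f$ directly, via the identity $g=qf+r$ and Weierstrass preparation.
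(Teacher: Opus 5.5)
\textbf{Verdict: genuine gap.} The Weierstrass-division step of your second inclusion is left open, and it is the whole difficulty.

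Your first and third inclusions are fine. The chain rule and implicit differentiation show that the functions respected by a fixed $\delta$ are closed under composition and implicit definition. The rescaling $cF(a\xi+b)$ has coefficients in $\overline{\QQ}$, which every derivation kills.

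Writing the remainder through implicitly defined roots only shows that $\delta$ respects $r$ and $q$ at points $a=(a',a_m)$ where $f(a',\cdot)$ has simple zeros and $f(a)\neq 0$. Extending to the remaining points by density cannot work. Every nonzero derivation of $\CC_p$ is discontinuous, since it kills the dense subfield $\overline{\QQ}$, so being respected at $a$ does not pass to limits. Nor does \cite[Rem.\,4.5.2]{CL} help: it says Weierstrass systems are closed under composition and implicit definition, not the converse.

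There is also a smaller gap in your item on coefficients. You need $\delta$ to respect the derivatives of elements produced \emph{earlier} in the induction. Closure of $\cF$ under differentiation only gives this for the generators. So the induction hypothesis must be ``$\delta$ respects $h$ and all partial derivatives of $h$'', and every operation must preserve it.

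The paper handles division by applying $\delta$ to coefficients, $h\mapsto h^\delta$. It uses $\delta(h(a))=h^\delta(a)+\sum_i\partial_ih(a)\,\delta a_i$ and concludes $q^\delta=r^\delta=0$ from $q^\delta f+r^\delta=0$ by uniqueness of division. You should not import this, because it has exactly the continuity problem you flagged. $h^\delta$ need not converge, and the identity fails. For example, take $h=\sum_n\epsilon_np^n\xi^n$ with $\epsilon_n\in\{0,1\}$ chosen so that $h(1)$ is transcendental. Then $h^\delta=0$ for all $\delta$, yet some derivation has $\delta(h(1))=1$.

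What works is a pointwise version of the same idea. Put $E_h(a):=\delta(h(a))-\sum_i\partial_ih(a)\,\delta a_i$. This is $\QQ$-linear, satisfies $E_{gh}=gE_h+hE_g$ pointwise, and vanishes on $\xi_m^j$. Assume $E_{\partial^\gamma f}\equiv E_{\partial^\gamma g}\equiv0$ for all $\gamma$, and write $r=\sum_{j<d}r_j(\xi')\xi_m^j$. Apply $E$ to $\partial_m^kg=\sum_{l\le k}\binom{k}{l}\partial_m^lq\,\partial_m^{k-l}f+\partial_m^kr$. At $a=(a',a_m)$ this gives
\[
0=\sum_{l\le k}\binom{k}{l}\,\partial_m^{k-l}f(a)\,E_{\partial_m^lq}(a)+R^{(k)}(a_m),\qquad R(X):=\sum_{j<d}E_{r_j}(a')X^j .
\]
\begin{itemize}
\item At a zero $\beta$ of $f(a',\cdot)$ of multiplicity $\mu$, the cases $k<\mu$ give $R^{(k)}(\beta)=0$.
\item By Weierstrass preparation, $f(a',\cdot)$ has exactly $d$ zeros in $\cO_{\CC_p}$ counted with multiplicity, while $\deg R<d$. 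Hence $R=0$, i.e.\ $\delta$ respects every $r_j$.
\item The case $k=0$ then gives $E_q(a)=0$ wherever $f(a)\neq0$.
\item At a zero of multiplicity $\mu$, the case $k=\mu$ collapses to $\partial_m^\mu f(a)\,E_q(a)=0$, so $E_q(a)=0$ there too.
\item Insert $\partial^{\alpha'}$ in the $\xi'$-variables and induct on $\alpha'$, then on $k$. This gives the same conclusion for all partial derivatives of $q$ and $r$, which preserves the strengthened hypothesis.
\end{itemize}
This root count with multiplicities is the missing step. It replaces both your density argument and the paper's appeal to uniqueness of division.
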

\begin{proof}
    Since $\cA_\cF = \cA_{\widetilde{\cF}}$, it is enough to prove that $\mathrm{Der}_{\CC_p}(\cF) = \mathrm{Der}_{\CC_p}(\cA_\cF)$.
    One inclusion is obvious, so let $\delta\in \mathrm{Der}_{\CC_p}(\cF)$. 
    The construction of $\cA_\cF$ is obtained inductively from elements of $\cF^\PR$, so we prove the statement by induction on the operations for a Weierstrass system.
    For the base of the induction, we have that $\delta$ respects all elements from $A_m\subset \cO_{\CC_p}\langle \xi_1, \ldots, \xi_m\rangle$ constructed above, since $\overline{\QQ}$ is contained in $\ker \delta$.
    For the induction step it is enough to prove the following facts.
    \begin{enumerate}
        \item If $\delta$ respects $f,g\in \cO_{\CC_p}\langle \xi\rangle$ then it also respect $f+g$ and $fg$. 
        This is clear by definition of a derivation.
        \item If $\delta$ respects $f_0, \ldots, f_d\in \cO_{\CC_p}\langle \xi_1, \ldots, \xi_{m-1}\rangle$, then $\delta$ also respects $\sum_i f_i \xi_m^i\in \cO\langle \xi_1, \ldots, \xi_m\rangle$. 
        This is clear by the previous fact, using that $\delta$ respects the identity function.
        \item If $\delta$ respects $f\in \cO_{\CC_p}\langle \xi_1, \ldots, \xi_m\rangle$ and we write $f = \sum_i f_i(\xi_1, \ldots, \xi_k)(\xi_{k+1}, \ldots, \xi_m)^i$, then $\delta$ also respects each $f_i$.
        This follows by taking partial derivatives of $f$ and plugging in $\xi_{k+1} = \ldots = \xi_m = 0$.
        Note that this step uses that $\cF$ is closed under differentiation.
        \item Finally, let $f\in \cO_{\CC_p}\langle \xi_1, \ldots, \xi_m\rangle$ be regular of degree $d$ in $\xi_m$, and take $g\in \cO_{\CC_p}\langle\xi_1, \ldots, \xi_m\rangle$. 
        Then there exist unique $q\in \cO_{\CC_p}\langle \xi_1, \ldots, \xi_m\rangle$ and $r\in \cO_{\CC_p}\langle\xi_1, \ldots, \xi_{m-1}\rangle [\xi_m]$ of degree at most $d-1$ such that $g=qf+r$.
        We have to show that if $\delta$ respects $f$ and $g$, then it also respects $q$ and $r$.

        For $h\in \cO_{\CC_p}\langle \xi_1, \ldots, \xi_m\rangle$ denote by $h^\delta\in \cO_{\CC_p}\langle \xi_1, \ldots, \xi_m\rangle$ the element obtain from $h$ by applying $\delta$ to all coefficients of the power series.
        Then $h\mapsto h^\delta$ is a derivation on $\cO_{\CC_p}\langle \xi_1, \ldots, \xi_m\rangle$.
        Moreover, we have that for $a\in \cO_{\CC_p}^m$
        \[
        \delta(h(a)) = h^{\delta}(a) + \sum_{i=1}^m (\partial_i h)(a)\delta a_i.
        \]
        Therefore $\delta$ respects $h$ if and only if $h^\delta = 0$.

        Now since $g=qf+r$ we have that
        \[
        g^\delta = q^\delta f + qf^\delta +r^\delta.
        \]
        Since $\delta$ respects $f$ and $g$, we obtain that
        \[
        q^\delta f + r^\delta = 0.
        \]
        But $f$ is regular of degree $d$, while $r^\delta$ is a polynomial in $\xi_m$ of degree at most $d-1$.
        Hence we conclude that $r^\delta = q^\delta = 0$ and $\delta$ respects both $q$ and $r$.
        \qedhere

    \end{enumerate}
\end{proof}

\begin{defn}
    Given a collection of functions $\mathcal{F}$ and a subset $X\subseteq\CC_p$, we let $D_{\mathcal{F}}(X)$ denote the set of all elements of $\CC_p$ of the form $F(\mathbf{w})$, where $F\in\mathcal{F}$ and $\mathbf{w}$ is a tuple of elements of $X$ such that $\mathbf{w}$ is in the domain of $F$.  
    We also define $LD_{\mathcal{F}}(X)$ in a similar way, but now $F$ is allowed to be locally definable from $\mathcal{F}$. 
\end{defn}

\begin{lem}[cf.\ {{\cite[Theorem 2.2]{wilkie:holo}}}]
\label{lem:pregeometryoperators}
    For any collection $\mathcal{F}$ of analytic functions which is closed under differentiation we have that $LD_{\mathcal{F}}$ and  $D_{\widetilde{\mathcal{F}}}$ are pregeometries on $\CC_p$. 
\end{lem}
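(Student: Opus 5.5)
The plan is to follow Wilkie's argument and identify both operators with a closure operator built from derivations, namely the one Lemma~\ref{lem:derivationspaces} lets us manipulate. For $X\subseteq\CC_p$ set
\[
\cl(X) = \{a\in\CC_p \mid \delta(a)=0 \text{ for all } \delta\in \mathrm{Der}_{\CC_p}(\cF) \text{ with } \delta\upharpoonright_X = 0\}.
\]
Any operator of this form (the common kernel of all derivations from a fixed class that vanish on $X$) is a pregeometry, since the derivations respecting $\cF$ form a vector space and the constant field is a field. Monotonicity and idempotence are formal. Finite character and exchange come from linear algebra in the space of derivations. The finite character step is the delicate one: I would use that, by the extension argument below, derivations vanishing on a finite $X_0$ can be prescribed freely on an independent set. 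Exchange then follows from the dual-space dimension count, as in Wilkie~\cite[\S2]{wilkie:holo}.

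The first main step is to show $D_{\widetilde{\cF}}(X)\subseteq \cl(X)$ and $LD_{\cF}(X)\subseteq\cl(X)$. For $D_{\widetilde\cF}$ this is immediate: by Lemma~\ref{lem:derivationspaces} every $\delta\in\mathrm{Der}(\cF)$ respects every $F\in\widetilde\cF$, so $\delta(F(\mathbf w))=\sum \partial_iF(\mathbf w)\delta w_i=0$. For $LD_\cF$, I would observe that a proper restriction definable in $(\CC_p,\cF^{\PR})$ is, by quantifier elimination in $\cL_{\cA_\cF}$ together with the Jacobian property, piecewise given on an open dense set by terms in $\cA_\cF$ and implicit functions of them. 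Since $\delta$ respects $\cA_\cF$ (again Lemma~\ref{lem:derivationspaces}), the same computation gives the inclusion at points of that open set. At the remaining, lower-dimensional points, I would reduce the dimension and induct, applying Weierstrass preparation to reach a lower-dimensional analytic chart.

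The second main step, and the main obstacle, is the reverse inclusion $\cl(X)\subseteq D_{\widetilde{\cF}}(X)$. Here I would argue contrapositively. Suppose $a\notin D_{\widetilde\cF}(X)$. Choose a $D_{\widetilde\cF}$-closed set containing $X$ (the value $D_{\widetilde\cF}(X)$ itself is closed, since $\widetilde\cF$ is closed under composition). Then construct a derivation respecting $\cF$ that vanishes on it but not at $a$. Following \cite[Lemma 3.3 ff.]{wilkie:holo}, the construction is by Zorn's lemma over partial derivations defined on $\widetilde\cF$-closed subfields: a derivation on $D_{\widetilde\cF}(Y)$ respecting $\cF$ is extended to $D_{\widetilde\cF}(Y\cup\{b\})$. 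If $b$ is "$\widetilde\cF$-transcendental" over $Y$, we assign $\delta b$ freely. Otherwise $b$ is implicitly defined by some $F\in\widetilde\cF$ with nonzero partial derivative, and $\delta b$ is forced by the implicit function formula; well-definedness uses the uniqueness in Weierstrass division, exactly as in step (4) of Lemma~\ref{lem:derivationspaces}. Since the paper's Theorem~\ref{thm:derivations.text} gives existence only over $\acl(\emptyset)$, I expect that the well-definedness check (independence of the choice of implicit definition) is where the real work lies. The fallback is to use Wilkie's Khovanskii-type rank argument with the Jacobian determinants of $\widetilde\cF$-systems.

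Finally, with $LD_\cF(X)\subseteq\cl(X)=D_{\widetilde\cF}(X)\subseteq LD_\cF(X)$ (the last inclusion holds because elements of $\widetilde\cF$ are locally definable from $\cF$ via Weierstrass systems), all three operators coincide. Both are therefore pregeometries.
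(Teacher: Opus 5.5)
There is a genuine gap. Your whole argument rests on the third step, $\mathrm{cl}(X)\subseteq D_{\widetilde{\cF}}(X)$, and that step is not carried out. It is also your only source of finite character for $\mathrm{cl}$, which is not a formal property of closures cut out by a vector space of derivations; only exchange is.

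The mechanism you sketch targets a case that cannot occur. $D_{\widetilde{\cF}}(Y)$ is already closed under implicit definition, so when you extend from $D_{\widetilde{\cF}}(Y)$ to $D_{\widetilde{\cF}}(Y\cup\{b\})$ there are only two cases. Either $b\in D_{\widetilde{\cF}}(Y)$ and there is nothing to do, or $b\notin D_{\widetilde{\cF}}(Y)$. The difficulty lies entirely in this second, ``free'' case: if $c=F(\mathbf{y},b)=G(\mathbf{y}',b)$ with $\mathbf{y},\mathbf{y}'$ from $D_{\widetilde{\cF}}(Y)$, the two chain-rule formulas for $\delta c$ must agree. Uniqueness in Weierstrass division says nothing about this. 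You also cannot borrow the construction from Theorem~\ref{thm:localdef}: there the derivation is built from a $D_{\widetilde{\cF}}$-basis $B\cup\{w\}$, which presupposes the present lemma.

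The missing idea is exactly the content of the paper's proof. Put $H(\mathbf{s},\mathbf{s}',t)=F(\mathbf{s},t)-G(\mathbf{s}',t)$ and suppose $t\mapsto H(\mathbf{y},\mathbf{y}',t)$ is not identically zero near $b$. Take the least $k\geq 1$ with $\partial_t^kH(\mathbf{y},\mathbf{y}',b)\neq 0$. Applying the implicit function theorem to $\partial_t^{k-1}H\in\widetilde{\cF}$ (this is where closure under differentiation is used) gives $b\in D_{\widetilde{\cF}}(Y)$, a contradiction. So $H(\mathbf{y},\mathbf{y}',\cdot)\equiv 0$ near $b$, and the $\delta b$-terms agree. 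The remaining terms agree because $\delta$ already respects $H$ at the points $(\mathbf{y},\mathbf{y}',t_0)$ with $t_0\in\overline{\QQ}$ near $b$, and these $t_0$ are dense.

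With this your route closes, but the same observation proves exchange directly, and that is all the paper does. Suppose $b=F(\mathbf{a})\notin D_{\widetilde{\cF}}(a_1,\ldots,a_{n-1})$. Then some $\partial_n^iF(\mathbf{a})$ with $i\geq 1$ is non-zero. For the least such $i$, replacing $F$ by $F+\partial_n^{i-1}F$ when $i\geq 2$ keeps the value $b$ at $\mathbf{a}$ and makes the $n$-th partial derivative there non-zero. The implicit function theorem then gives $a_n\in D_{\widetilde{\cF}}(a_1,\ldots,a_{n-1},b)$. The remaining axioms follow at once from closure under composition and the finiteness of arities, and $LD_\cF$ is handled the same way. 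So the detour through derivations, and through Theorem~\ref{thm:localdef}, for which this lemma is an input, is not needed here.
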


\begin{proof}
    Let us check that $D_{\widetilde{\cF}}$ is a pregeometry, the proof for $LD_\cF$ is similar.
    The only non-obvious property is the exchange principle.
    So take $a_1, \ldots, a_n\in \CC_p$ and $b\in D_{\widetilde{\cF}}(a_1, \ldots, a_n)\setminus D_{\widetilde{\cF}}(a_1, \ldots, a_{n-1})$.
    By definition, there exists a function $F\in \widetilde{F}$ with $(a_1, \ldots, a_n)$ in its domain such that
    \[
    b = F(a_1, \ldots, a_n).
    \]
    Now, if $(\partial_n^i F)(a_1, \ldots, a_n) = 0$ for all $i\geq 1$, then in a neighbourhood of $(a_1, \ldots, a_n)$, $F$ is independent of $a_n$.
    But then $b\in D_{\widetilde{\cF}}(a_1, \ldots, a_{n-1})$, which is impossible.
    Let $i$ be minimal such that $(\partial_n^i F)(a_1, \ldots, a_n) \neq 0$.
    After replacing $F$ by $F + \partial_n^{i-1}F$ if necessary, we may assume that $i=1$.
    By the implicit function theorem, there now exists a function $g$ of $n$ variables such that 
    \[
    g(a_1, \ldots, a_{n-1}, b) = a_n,
    \]
    and such that $g$ is in $\widetilde{\cF}$. 
    Hence $a_n\in D_{\widetilde{\cF}}(a_1, \ldots, a_{n-1}, b)$.
\end{proof}

\begin{cor}[cf.\ {{\cite[Lemma 2.3]{wilkie:holo}}}]
    \label{cor:D-dependent}
    Elements $w_1,\ldots,w_n\in\mathbb{C}_p$ are $LD_\mathcal{F}$-dependent if and only if there is $F\in\widetilde{\mathcal{F}}$ such that $\mathbf{w}=(w_1,\ldots,w_n)\in\mathrm{dom}(F)$, $F(\mathbf{w})=0$ and there is $\Delta\subseteq\mathrm{dom}(F)$ suitable for $F$ satisfying $\mathbf{w}\in\Delta$ and $F\upharpoonright_{\Delta}\not\equiv0$.  
\end{cor}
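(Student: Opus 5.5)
The natural route mirrors Wilkie's Lemma 2.3. We unwind the definition of $LD_\cF$-dependence. One point needs care: the statement mixes $LD_\cF$ with $\widetilde{\cF}$. So the plan is to prove the equivalence by passing through locally definable functions, and to use that $D_{\widetilde\cF}$ and $LD_\cF$ are pregeometries (Lemma~\ref{lem:pregeometryoperators}).

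For the forward direction, suppose $w_1,\ldots,w_n$ are $LD_\cF$-dependent. After reordering we may assume $w_n\in LD_\cF(w_1,\ldots,w_{n-1})$, say $w_n=G(w_1,\ldots,w_{n-1})$ with $G$ locally definable from $\cF$. Put $H(\mathbf z)=z_n-G(z_1,\ldots,z_{n-1})$. Then $H(\mathbf w)=0$, and $H$ is nonzero on any box around $\mathbf w$, since $\partial H/\partial z_n=1$. The issue is that $H$ is only locally definable, not an element of $\widetilde\cF$. I would fix this by choosing a suitable box $\Delta\ni\mathbf w$ on which $G$ is definable in $(\CC_p,\cF^{\PR})$. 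By quantifier elimination in $\cL_{\cA_\cF}$, the graph of $G\upharpoonright_\Delta$ is then described by a quantifier-free formula in terms functions from the Weierstrass system $\cA_\cF$. Since $G$ is analytic and its graph is a $(n-1)$-dimensional definable set, near a generic point of the relevant piece the graph lies in the zero set of a single nonzero term $F$ built from $\cA_\cF$. Such terms are compositions, implicit definitions and rescalings of elements of $\cF^{\PR}$, and $\widetilde\cF$ is closed under these operations; this is the remark $\cA_{\widetilde\cF}=\cA_\cF$. So we may take $F\in\widetilde\cF$ with $F(\mathbf w)=0$ and $F\not\equiv0$ near $\mathbf w$.

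The main obstacle is making that last step precise at the specific point $\mathbf w$ rather than at a generic point. The case $\mathbf w$ sits at a bad point of the quantifier-free description has to be handled. I would handle it by induction on the complexity of the term, using the Weierstrass preparation available in $\cA_\cF$ to pick a term vanishing identically on the graph but not identically on a box around $\mathbf w$. Alternatively, one can argue via derivations: Lemma~\ref{lem:derivationspaces} gives $\mathrm{Der}(\cF)=\mathrm{Der}(\cA_\cF)$, which shows the closures agree.

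For the converse, suppose we are given $F\in\widetilde\cF$ with $F(\mathbf w)=0$ and $F\upharpoonright_\Delta\not\equiv0$. We argue as in the proof of Lemma~\ref{lem:pregeometryoperators}. Choose a coordinate $j$ and the minimal order $i$ such that $\partial_j^iF(\mathbf w)\neq0$, after a linear change of coordinates over $\overline{\QQ}$ if necessary; since $F\not\equiv 0$ on $\Delta$, some partial derivative of $F$ is nonzero at $\mathbf w$. Replace $F$ by $\partial_j^{i-1}F$ if $i\geq 2$; this stays in $\widetilde\cF$ because $\cF$ is closed under differentiation and $\widetilde\cF$ is closed under composition. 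The implicit function theorem then gives $g\in\widetilde\cF$ with $w_j=g(\mathbf w_{\hat j})$. Every element of $\widetilde\cF$ is locally definable from $\cF$: a proper restriction of $g$ is given by an element of $\cA_\cF$, and hence is definable in $(\CC_p,\cF^{\PR})$. Therefore $w_j\in LD_\cF(\mathbf w_{\hat j})$, which is the required dependence.
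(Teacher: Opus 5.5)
The paper gives no separate proof of this corollary. It is meant to be read off from the implicit-function argument of Lemma~\ref{lem:pregeometryoperators}, together with the identity $LD_\cF=D_{\widetilde\cF}$ from the first part of Theorem~\ref{thm:localdef}. Your converse direction is that argument and is fine up to a slip. After a $\overline{\QQ}$-linear change of coordinates the implicit function theorem gives $u_j=g(\mathbf u_{\hat j})$ with $\mathbf u=A^{-1}\mathbf w$, not $w_j=g(\mathbf w_{\hat j})$. It is cleaner to skip the coordinate change: take $\alpha$ of minimal total order with $\partial^\alpha F(\mathbf w)\neq 0$, write $\partial^\alpha=\partial_j\partial^\beta$, and apply the implicit function theorem to $\partial^\beta F$, which vanishes at $\mathbf w$ by minimality.

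The forward direction, however, has a genuine gap. Its key step, that terms of $\cL_{\cA_\cF}$ are compositions, implicit definitions and rescalings of elements of $\cF^{\PR}$, misreads the remark $\cA_{\widetilde\cF}=\cA_\cF$. That remark says Weierstrass systems are closed under composition and implicit definition, so $\widetilde\cF$ sits locally inside $\cA_\cF$; it says nothing about the converse. $\cA_\cF$ is generated using Weierstrass division, and nothing makes $\widetilde\cF$ closed under it. The obvious way to write a remainder, via implicitly defined roots and interpolation, works only where the roots of the divisor are simple. Your claim is also much stronger than anything the paper proves: the paper matches locally definable functions with $\widetilde\cF$ only at $D_{\widetilde\cF}$-independent points (the `furthermore' clause of Theorem~\ref{thm:localdef}), and only via derivations. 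Finally, a dependent $\mathbf w$ is by definition non-generic, so ``a generic point of the relevant piece'' never reaches $\mathbf w$. Your proposed induction on terms is left open exactly where such local descriptions become delicate (cf.\ \cite{jkgs}).

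The missing idea is that the statement is pointwise, so you need no $\widetilde\cF$-function related to $G$ at all. Suppose $LD_\cF(X)=D_{\widetilde\cF}(X)$ for $X=\{w_1,\ldots,w_{n-1}\}$. Then $w_n=G'(w_1,\ldots,w_{n-1})$ for some $G'\in\widetilde\cF$ that may have nothing to do with $G$ near $\mathbf w$. Now $F=z_n-G'(z_1,\ldots,z_{n-1})$ works, since $\partial F/\partial z_n=1$.

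Your one-line alternative via Lemma~\ref{lem:derivationspaces} does not by itself give this identity. Equality of derivation spaces, through Lemma~\ref{lem:locallydef}, only yields $LD_\cF(X)\subseteq\cF\mathrm{cl}(X)$. The reverse inclusion $\cF\mathrm{cl}(X)\subseteq D_{\widetilde\cF}(X)$ requires constructing a derivation from a $D_{\widetilde\cF}$-basis $B\cup\{w\}$ with $X\subseteq D_{\widetilde\cF}(B)$. That derivation must respect $\cF$, vanish on $D_{\widetilde\cF}(B)$, and send $w$ to $1$. Its well-definedness follows from your converse argument, which in fact produces $D_{\widetilde\cF}$-dependence. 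Applied at the independent point $(\mathbf b,w)$, it shows that two $\widetilde\cF$-representations of the same element agree near that point. So deriving the forward direction from this identity is not circular.
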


\begin{lem}[cf.\ {{\cite[Lemma 5.3]{wilkie:holo}}}]
\label{lem:locallydef}
    For any collection $\mathcal{F}$ of analytic functions which is closed under differentiation we have that for all $\delta\in\mathrm{Der}_{\CC_p}(\mathcal{F})$, $\delta$ respects all functions which are locally definable from $\mathcal{F}$. 
\end{lem}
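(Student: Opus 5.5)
The plan is to reduce to the Weierstrass system $\cA_\cF$ and use quantifier elimination there. By Lemma~\ref{lem:derivationspaces}, $\delta\in\mathrm{Der}_{\CC_p}(\cF)$ also lies in $\mathrm{Der}_{\CC_p}(\cA_\cF)$. So $\delta$ respects every strictly convergent power series in $\cA_\cF$, viewed as a function on $\cO_{\CC_p}^m$. Let $F$ be locally definable from $\cF$ and fix $\mathbf{w}\in\mathrm{dom}(F)$. Choose $\Delta$ suitable with $\mathbf{w}\in\Delta$ such that $F\upharpoonright_\Delta$ is definable in $(\CC_p,\cF^{\PR})$, and hence in $(\CC_p,\cL_{\cA_\cF})$. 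Since respecting $F$ is a pointwise condition at $\mathbf{w}$, it suffices to show that $\delta$ respects $G:=F\upharpoonright_\Delta$ at $\mathbf{w}$. After an affine change of variables with coefficients in $\overline{\QQ}$, which $\delta$ kills, we may assume $\Delta=\cO_{\CC_p}^n$ and that $G$ is a strictly convergent power series.

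The key step is to show that the graph of $G$ is locally, near $(\mathbf{w},G(\mathbf{w}))$, cut out by $\cL_{\cA_\cF}$-terms in a way that forces $\delta$-compatibility. By quantifier elimination for $\cL_{\cA_\cF}$ (\cite[Thm.\,4.5.15]{CL}), together with the term structure of Weierstrass systems (which allows division restricted to $\cO$, and is closed under composition and implicit definition, \cite[Rem.\,4.5.2]{CL}), one gets the following: the graph of a definable function is, on a dense open subset of its domain, given piecewise by $\cL_{\cA_\cF}$-terms. A cleaner route is the term-definability theorem for analytic Weierstrass structures, namely that definable functions are piecewise given by terms (cf.\ the analog of Denef--van den Dries). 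Granting this, there is a finite partition of $\Delta$ into definable pieces on each of which $G$ equals a term $t$ built from elements of $\cA_\cF$, field operations and restricted division.

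The generic point is the hard part, because $\mathbf{w}$ need not lie in the interior of a piece. I would handle it by Taylor expansion. The coefficients of $G$ at $\mathbf{w}$ are determined by the values of $G$ on any nonempty open set, by analytic continuation on the ball $\Delta$. Choose an open box $U\subset\Delta$ on which $G=t$. Derivations respecting all of $\cA_\cF$ respect such terms $t$ wherever they are analytic, by the chain rule and the derivation rules for quotients. Applying Lemma~\ref{lem:derivationspaces} once more, now to the collection $\cF\cup\{G\}$ (its closure under differentiation is still locally in $\cA_\cF$, since $G$ is $\cL_{\cA_\cF}$-definable), it suffices to show that $G^\delta=0$ in the notation from the proof of Lemma~\ref{lem:derivationspaces}. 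That is, $\delta$ applied coefficientwise to $G$ should vanish.

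On $U$ we have $\delta(G(a))=G^\delta(a)+\sum_i\partial_iG(a)\delta a_i$, while respecting $t$ gives $\delta(G(a))=\sum_i\partial_iG(a)\delta a_i$. So $G^\delta$ vanishes on $U$, and hence, being a strictly convergent power series on the ball $\Delta$, vanishes identically. This yields the identity at $\mathbf{w}$.

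The main obstacle I expect is justifying the term-description step for $\cL_{\cA_\cF}$-definable functions. If that is not available directly, I would instead use the $1$-h-minimal dimension theory. The set of points where $G$ fails to coincide locally with some term has dimension $<n$, so a nonempty open $U$ as above still exists, and only this existence is needed in the final argument.
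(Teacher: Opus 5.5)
Up to the point where you pick the box $U$, your argument is the paper's. By Lemma~\ref{lem:derivationspaces}, $\delta$ respects every element of $\cA_\cF$, and the term structure of $\cL_{\cA_\cF}$-definable functions from \cite[Thm.\,6.3.8]{CL} is used to pass to definable functions. Those terms also involve Henselian root functions, which you omit; $\delta$ respects them by implicit differentiation. The paper stops there, saying that definable functions are ``locally given by a term''. You are right that such term descriptions are in general only piecewise, so a point $\mathbf{w}$ on the boundary of the pieces needs an extra argument. The argument you supply for it, however, fails.

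\textbf{The failing step.} You use the identity $\delta(G(a))=G^\delta(a)+\sum_i\partial_iG(a)\,\delta a_i$ on $U$ and again at $\mathbf{w}$. This would require $\delta$ to commute with the infinite sum defining $G(a)$. Every derivation of $\CC_p$ kills the dense subfield $\overline{\QQ}$, so no nonzero derivation is continuous. The identity also appears in the proof of Lemma~\ref{lem:derivationspaces}, but as a general statement it is false.
\begin{itemize}
\item Counterexample: choose $\varepsilon_k\in\{0,1\}$ such that $h(1)\notin\overline{\QQ}$ for $h=\sum_k\varepsilon_kp^k\xi^k$. Then $h^\delta=0$ for every $\delta$, yet some derivation has $\delta(h(1))=1\neq h'(1)\,\delta(1)$.
\item For the same reason $G^\delta$ need not have coefficients tending to $0$. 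So it need not define a function, and the identity principle cannot be applied to it.
\end{itemize}
Nothing therefore carries ``$\delta$ respects $G$'' from $U$ to $\mathbf{w}$. Your appeal to Lemma~\ref{lem:derivationspaces} for $\cF\cup\{G\}$ is also circular, since $\delta\in\mathrm{Der}_{\CC_p}(\cF\cup\{G\})$ is exactly what you are trying to prove.

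\textbf{A valid repair.} Use genericity over the constants, as in the proof of Theorem~\ref{thm:derivations.text}.
\begin{enumerate}
\item Put $D=\ker\delta\supseteq\overline{\QQ}$, and take a maximal subtuple $\mathbf{w}'$ of $\mathbf{w}$ that is $\acl$-independent over $D$.
\item Express the other coordinates of $\mathbf{w}$, and $G(\mathbf{w})$, as values at $\mathbf{w}'$ of $D$-definable functions. Select the correct branches using balls centred at points of $\overline{\QQ}$.
\item Since $\mathbf{w}'$ lies in no lower-dimensional $D$-definable set, these functions are given near $\mathbf{w}'$ by terms with parameters in $D$. Hence $\delta$ respects them at $\mathbf{w}'$.
\item The chain rule then yields $\delta(G(\mathbf{w}))=\sum_i\partial_iG(\mathbf{w})\,\delta w_i$.
\end{enumerate}
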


\begin{proof}
    Let $F$ be locally definable from $\cF$.
    Since the statement is local, we may assume that $F$ is definable in $(\CC_p, \cF^{\PR})$.
    But $(\CC_p, \cL_{\cA_\cF})$ is an expansion by definition of $(\CC_p, \cF^{\PR})$, and so $F$ is also definable in this structure.
    By Lemma~\ref{lem:derivationspaces}, $\delta$ respects all elements of $\cA_\cF$.
    Now by~\cite[Thm.\,6.3.8]{CL} every definable function in $(\CC_p, \cL_{\cA_\cF})$ is locally given by a term in a slightly larger language from~\cite[Sec.\,6.2]{CL}, and hence $\delta$ respects every definable function in $(\CC_p, \cL_{\cA_\cF})$.
\end{proof}

\begin{defn}
Given a collection of functions $\mathcal{F}$ and a subset $X\subseteq\CC_p$, we define 
\[\mathcal{F}\mathrm{cl}(X):=\{a\in\CC_p : \forall\delta\in\mathrm{Der}_{\CC_p}(\mathcal{F})(\delta(X) = \{0\}\implies \delta(a)=0)\}.\]
\end{defn}

\begin{thm}[cf.\ {{\cite[Theorem 1.10]{wilkie:holo}}}]
\label{thm:localdef}
    Given a collection $\mathcal{F}$ of analytic functions which is closed under differentiation, the operators $D_{\widetilde{\mathcal{F}}}$ and $LD_{\mathcal{F}}$ are both pregeometries and they are identical. 
    
    Furthermore, given $D_{\widetilde{\mathcal{F}}}$-independent elements $a_1,\ldots,a_n\in \CC_p$ and a function $F$ which is locally definable from $\mathcal{F}$ with $\mathbf{a}\in\mathrm{dom}(F)$, there is $G\in\widetilde{\mathcal{F}}$ with $\mathbf{a}\in\mathrm{dom}(G)$ and there is a suitable $\Delta\subseteq\mathrm{dom}(F)\cap\mathrm{dom}(G)$, such that $G\upharpoonright_{\Delta} = F\upharpoonright_{\Delta}$. 
\end{thm}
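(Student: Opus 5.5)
We follow Wilkie's strategy: compare both operators with the derivation closure $\cF\mathrm{cl}$. Since $\widetilde{\cF}\subseteq$ the locally definable functions (each element of $\widetilde{\cF}$ restricts on suitable boxes to an element of $\cA_\cF$, hence to something definable in $(\CC_p,\cL_{\cA_\cF})$, an expansion by definitions of $(\CC_p,\cF^{\PR})$), we have $D_{\widetilde{\cF}}(X)\subseteq LD_\cF(X)$. Lemma~\ref{lem:locallydef} gives $LD_\cF(X)\subseteq \cF\mathrm{cl}(X)$: if $a=F(\mathbf{w})$ with $F$ locally definable and $\delta$ kills $X$, then $\delta(a)=\sum_i(\partial_iF)(\mathbf{w})\delta w_i=0$. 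So it remains to prove $\cF\mathrm{cl}(X)\subseteq D_{\widetilde{\cF}}(X)$. Both operators are pregeometries by Lemma~\ref{lem:pregeometryoperators}, so this gives equality of all three.

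For the reverse inclusion, it suffices to handle finite $X$. Given $a\notin D_{\widetilde{\cF}}(X)$, we construct $\delta\in\mathrm{Der}_{\CC_p}(\cF)$ with $\delta(X)=0$ and $\delta(a)\neq 0$. We would work in the $\cL_{\cA_\cF}$-structure, or a countable sublanguage made $1$-h-minimal via Lemma~\ref{lem:h.min.countable.language}. We add constants for $X$ and apply Theorem~\ref{thm:derivations.text}. This yields derivations that respect every $\emptyset$-definable $C^1$ function, hence every element of $\cA_\cF$ and therefore of $\cF$ (by Lemma~\ref{lem:derivationspaces}). Their common kernel is $\acl(X)$. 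So we need $a\notin\acl(X)$ in this structure, i.e.\ $\acl$ in $(\CC_p,\cL_{\cA_\cF})$ is contained in $D_{\widetilde{\cF}}$.

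\textbf{This is the main obstacle}, and it is where the missing non-Archimedean Gabrielov theorem bites. We would argue by induction on the size of a $D_{\widetilde{\cF}}$-basis of $X$. Suppose $a\in\acl(\mathbf{x})$ with $\mathbf{x}$ $D_{\widetilde{\cF}}$-independent. Then $a$ lies in a finite $\mathbf{x}$-definable set, so there is a $\emptyset$-definable set $Z$ with $(\mathbf{x},a)\in Z$ and finite fibres over the first coordinates. Using quantifier elimination in $\cL_{\cA_\cF}$ and the term description from \cite[Thm.\,6.3.8]{CL}, we can locally around $(\mathbf{x},a)$ write $Z$ inside the zero set of a term function $T(\mathbf{y},z)$ built from elements of $\cA_\cF$, field operations and division. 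Such terms are in $\widetilde{\cF}$ after clearing denominators, using closure under composition and implicit definition. Since the fibre is finite, $T(\mathbf{x},\cdot)$ is not identically zero near $a$. Genericity of $\mathbf{x}$ should then ensure $T$ has a nonvanishing $z$-derivative of some order, so by the argument in Lemma~\ref{lem:pregeometryoperators}, $a\in D_{\widetilde{\cF}}(\mathbf{x})$. The delicate point is ensuring the term representation is valid at the generic point $\mathbf{x}$ rather than merely off a lower-dimensional set. Here we would use that the bad set is $\emptyset$-definable of lower dimension, so by genericity it avoids $(\mathbf{x})$, as in the proof of Theorem~\ref{thm:derivations.text}.

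For the second statement, let $\mathbf{a}$ be $D_{\widetilde{\cF}}$-independent and $b=F(\mathbf{a})$. Then $b\in LD_\cF(\mathbf{a})=D_{\widetilde{\cF}}(\mathbf{a})$, so $b=G(\mathbf{a})$ for some $G\in\widetilde{\cF}$. Then $H=F-G$ is locally definable and vanishes at $\mathbf{a}$. If $H$ were nonzero on every suitable box around $\mathbf{a}$, then by Corollary~\ref{cor:D-dependent}, applied after the identification $LD_\cF=D_{\widetilde{\cF}}$, the tuple $\mathbf{a}$ would be dependent, a contradiction. Hence $F$ and $G$ agree on some suitable $\Delta\ni\mathbf{a}$.
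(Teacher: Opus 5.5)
Your inclusions $D_{\widetilde{\cF}}(X)\subseteq LD_\cF(X)\subseteq \cF\mathrm{cl}(X)$ and your handling of the ``furthermore'' clause are as in the paper. The reverse inclusion, however, has a genuine gap.

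Routing through Theorem~\ref{thm:derivations.text} gives derivations in $\mathrm{Der}_{\CC_p}(\cF)$ whose common kernel is $\acl(X)$. You are therefore forced to prove $\acl(X)\subseteq D_{\widetilde{\cF}}(X)$, i.e.\ that a $D_{\widetilde{\cF}}$-independent tuple $\mathbf{x}$ is $\acl$-independent. Your sketch of this is circular. The step ``the bad set is $\emptyset$-definable of lower dimension, so by genericity it avoids $\mathbf{x}$, as in Theorem~\ref{thm:derivations.text}'' needs $\mathbf{x}$ to lie in no $\emptyset$-definable set of dimension $<n$. That is exactly $\acl$-independence of $\mathbf{x}$, whereas you only know $D_{\widetilde{\cF}}$-independence, and nothing in your argument passes from the latter to the former.

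The step ``such terms are in $\widetilde{\cF}$'' is also unsupported. The paper only records $\cA_{\widetilde{\cF}}=\cA_\cF$, i.e.\ that restrictions of elements of $\widetilde{\cF}$ lie in $\cA_\cF$. It does not show that elements of $\cA_\cF$ produced by Weierstrass division, or the extra symbols in the term description of \cite[Thm.\,6.3.8]{CL}, are locally in $\widetilde{\cF}$.

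The paper sidesteps $\acl$ entirely. Given $w\notin D_{\widetilde{\cF}}(X)$, it takes a $D_{\widetilde{\cF}}$-basis of $\CC_p$ of the form $B\cup\{w\}$, with $B$ containing a basis of $X$. It writes each $c\in\CC_p$ as $c=F(b_1,\ldots,b_n,w)$ with $F\in\widetilde{\cF}$ and $b_i\in B$, and sets $\delta(c)=\frac{\partial F}{\partial z_{n+1}}(b_1,\ldots,b_n,w)$. This is the construction of Theorem~\ref{thm:derivations.text} with the $\dcl$-basis replaced by a $D_{\widetilde{\cF}}$-basis, so $\delta$ kills $X$ and $\delta(w)=1$ by design.

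The paper says the verification goes as in Theorem~\ref{thm:derivations.text}. Concretely, well-definedness only needs that $F(\mathbf{b},w)=G(\mathbf{b},w)$ forces $F(\mathbf{b},\cdot)-G(\mathbf{b},\cdot)$ to vanish identically near $w$. If it did not, some $z$-derivative would be nonzero at $(\mathbf{b},w)$, and the implicit-function argument of Lemma~\ref{lem:pregeometryoperators} would put $w\in D_{\widetilde{\cF}}(\mathbf{b})$, a contradiction. The Leibniz rule and respect for $\cF$ then follow from the closure properties of $\widetilde{\cF}$ and the chain rule.

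So the missing idea is to build the derivation on a $D_{\widetilde{\cF}}$-basis, where independence is exactly what the well-definedness argument uses. Building it on an $\acl$-basis, as you do, forces the unproved comparison between $\acl$ and $D_{\widetilde{\cF}}$.
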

\begin{proof}
We first observe that any function in $\widetilde{\mathcal{F}}$ is locally definable from $\mathcal{F}$, so $D_{\widetilde{\mathcal{F}}}(X)\subseteq LD_{\mathcal{F}}(X)$ for all $X\subseteq\mathbb{C}_p$. 

Next we show that $LD_{\mathcal{F}}(X)\subseteq\mathcal{F}\mathrm{cl}(X)$. 
Given $a\in LD_{\mathcal{F}}(X)$ there exist an analytic function $F$ locally definable from $\mathcal{F}$ and elements $x_1,\ldots,x_n\in X$ so that $\mathbf{x}=(x_1,\ldots,x_n)\in\mathrm{dom}(F)$ and $F(\mathbf{x}) = a$. 
By Lemma \ref{lem:locallydef} we know that any $\delta\in \mathrm{Der}_{\CC_p}(\mathcal{F})$ will respect $F$ on its domain, in particular if $\delta$ vanishes on $x_1,\ldots,x_n$, then $\delta(a)=0$.
This proves that $LD_{\mathcal{F}}(X)\subseteq\mathcal{F}\mathrm{cl}(X)$. 

To complete the proof that $D_{\widetilde{\mathcal{F}}}$ and $LD_{\mathcal{F}}$ are identical, it suffices to prove that  $\mathcal{F}\mathrm{cl}(X)\subseteq D_{\widetilde{\mathcal{F}}}(X)$.
By Lemma \ref{lem:pregeometryoperators} we know that $D_{\widetilde{\mathcal{F}}}$ is a pregeometry, so given $w\notin D_{\widetilde{\mathcal{F}}}(X)$, we may choose a $D_{\widetilde{\mathcal{F}}}$-basis of $\CC_p$ of the form $B\cup\{w\}$.
Define a map $\delta:\CC_p\to\CC_p$ in the following way. 
Given $c\in\CC_p$ there is a function $F\in\widetilde{\mathcal{F}}$ and elements $b_1,\ldots,b_n\in B$ so that $c= F(b_1,\ldots,b_n,w)$. 
Set $\delta(c):=\frac{\partial F}{\partial z_{n+1}}(b_1,\ldots,b_n,w)$.
Just as in the proof of Theorem~\ref{thm:derivations}, one uses $1$-h-minimality of $(\CC_p, \cF^\PR)$ to show that $\delta$ is well-defined and indeed a derivation.
One then gets that $\delta$ vanishes on $X$ and satisfies $\delta(w)=1$, which then proves that $\mathcal{F}\mathrm{cl}(X)\subseteq D_{\widetilde{\mathcal{F}}}(X)$ as desired. 

The `furthermore' clause now follows from Corollary \ref{cor:D-dependent}.
\end{proof}

\subsection{Applications to \texorpdfstring{$p$}{p}-adic Schanuel}
\label{subsec:p-adicschanuel}
We first recall the classical conjecture for the usual complex exponential function, see \cite{lang:introtrans}. 

\begin{conj}[Schanuel]
    Let $z_1,\ldots,z_n\in\CC$ be $\QQ$-linearly independent. 
    Then
    \begin{equation*}
        \mathrm{tr.deg.}_\QQ \QQ(z_1,\ldots,z_n,\exp(z_1),\ldots,\exp(z_n))\geq n.
    \end{equation*}
\end{conj}

One can formulate a similar conjecture for the $p$-adic exponential function, see \cite{mariaule:padicexponential,mariaule:phdthesis}.

\begin{conj}[$p$-adic Schanuel]
\label{conj:$p$-adicschanuel}
    Suppose $z_1,\ldots,z_n\in\DD_p\subset \CC_p$ are $\QQ$-linearly independent. Then
    \begin{equation*}
        \mathrm{tr.deg.}_\QQ \QQ(z_1,\ldots,z_n,\exp(z_1),\ldots,\exp(z_n))\geq n.
    \end{equation*}
\end{conj}

When used to obtain conditional results, often only a weaker form of the $p$-adic Schanuel conjecture is employed, where $z_1,\ldots,z_n$ are further assumed to be algebraic, see for example~\cite{padic.schanuel, skolem.meets.schanuel}. 

\begin{remark}
Schanuel's conjecture (both the usual one and the $p$-adic one) may be reformulated in more geometric terms in the following way:
given an algebraic variety $V\subseteq\mathbb{G}_a^n\times\mathbb{G}_m^n$ defined over a number field and with $\dim V<n$, for any point $\mathbf{x}$ in the domain of the exponential map (where the choice of domain depends on whether we are working or $\mathbb{C}$ or over $\mathbb{C}_p$) we have that if $(\mathbf{x},\exp(\mathbf{x}))\in V$, then the coordinates of $\mathbf{x}$ are $\mathbb{Q}$-linearly dependent. 
\end{remark}

Kirby and Zilber show in \cite{kirby-zilber:unifschanuel} that Schanuel's conjecture for real numbers implies a uniform version of itself (a uniform version of Schanuel's conjecture for complex numbers can only be achieved using the also open Zilber--Pink conjecture for powers of $\GG_m$, sometimes called the conjecture on intersection with tori, see \cite{kirby-zilber}). 
We show now that the same holds for $p$-adic exponentiation. 

\begin{proof}[Proof of Theorem \ref{thm:unifpschanuel}]
    A proper $\QQ$-linear subspace of $\GG_a^n$ is determined by equations of the form
    \[m_1X_1 + \cdots + m_nX_n = 0\]
    where $m_1,\ldots,m_n\in\ZZ$ are not all zero.
    To prove the theorem, we will show that Conjecture \ref{conj:$p$-adicschanuel} implies that there is a positive integer $N$ (depending only on $V$ and possibly $p$) so that the set $X:= \{\mathbf{x}\in\CC_p^n : \mathbf{x}\in\DD_p^n \wedge (\mathbf{x},\exp(\mathbf{x})) \in V\}$ is contained in the union of proper $\QQ$-linear subspaces whose coefficients are bounded above by $N$ in absolute value. 
    
    First observe that the set $X$ is definable a 1-h-minimal expansion of $\CC_p$. 
    By Conjecture \ref{conj:$p$-adicschanuel}, for every $\mathbf{x}\in X$ there are $m_1,\ldots,m_n\in\mathbb{Z}$ such that $m_1x_1+\cdots+m_nx_n=0$. 
    If the 1-h-minimal dimension of $X$ is zero, then $X$ is a finite set, so in this case the bound $N$ can be obtained by taking the maximum of the $|m_i|$ appearing in the $\mathbb{Z}$-linear combinations of each $\mathbf{x}\in X$. 

    From now on we assume that $X$ has positive 1-h-minimal dimension. 
    Now~\cite[Thm.\,6.3.8]{CL} shows that we have analytic cell decomposition, and hence there is a positive dimensional cell $C\subseteq X$ and a definable analytic diffeomorphism $\theta: B\to C$ from a product of open disks and annuli $B$ to $C$. 
    Given $a,b\in C$ distinct let $x,y\in B$ be so that $\theta(x)=a$ and $\theta(y)=b$.
    There exists some annulus $A\subset \cO_{\CC_p}$ containing $0$ and $1$ such that the map $\gamma: A\to B: t\mapsto t\mathbf{x} + (1-t)\mathbf{y}$ is well-defined.
    Since there are only countably many $\mathbb{Z}$-linear relations and uncountably many elements in $A$, by Conjecture \ref{conj:$p$-adicschanuel} there must be some equation of the form $m_1x_1+\cdots +m_nx_n=0$, with $m_1,\ldots,m_n\in\mathbb{Z}$, which is satisfied by $\gamma(t)$ for uncountably many $t\in A$. 
    By 1-h-minimality, the set $\{t\in A : m_1\gamma(t)_1+\cdots +m_n\gamma(t)_n=0\}$ must contain an open ball. 
    By the identity principle, this implies that $m_1\gamma(t)_1+\cdots +m_n\gamma(t)_n=0$ for all $t\in A$. 
    We can now proceed inductively in the dimension on the cell to conclude that every point in $C$ must satisfy the same $\mathbb{Z}$-linear relation. 
\end{proof}

Next, we wish to prove some weak forms of Conjecture \ref{conj:$p$-adicschanuel}.
For this we recall the theorem of Ax for exponentiation in the setting of differential fields.

Let $F$ be a field and $C\subset F$ a subfield. 
Recall that given a field $F$ of characteristic zero, and a subfield $C\subset F$, elements $x_1, \ldots, x_n\in F$ are said to be \emph{$\QQ$-linearly independent modulo $C$} if $C$ does not contain any non-trivial $\QQ$-linear combinations of the $x_i$.

\begin{thm}[{{\cite[Theorem 3]{ax}}}]
\label{thm:ax}
    Let $(F,\Delta)$ be a differential field of characteristic zero with field of constants $C$. 
    Suppose that $x_1, \ldots, x_n, y_1, \ldots, y_n\in F$ are such that $\partial(y_i) = y_i\partial(x_i)$ for all $i\in\{1,\ldots,n\}$ and all $\partial\in\Delta$, and $x_1,\ldots,x_n$ are $\QQ$-linearly independent modulo $C$. Then
    \begin{equation*}
        \mathrm{tr.deg.}_C C(x_1, \ldots, x_n, y_1, \ldots, y_n)\geq n + \rk(\partial x_i).
    \end{equation*}
\end{thm}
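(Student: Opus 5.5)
The proof I have in mind follows Ax's original argument via Kähler differentials. Throughout I may assume that $C$ is algebraically closed in $F$, since the constants of a differential field of characteristic zero are relatively algebraically closed. Put $K=C(\mathbf{x},\mathbf{y})$ and $t=\trdeg_C K$, and let $\Omega=\Omega_{K/C}$, a $K$-vector space of dimension $t$. Every $\partial\in\Delta$ restricts to a $C$-derivation $K\to F$. So each $\partial$ gives a $K$-linear functional $\omega\mapsto\langle\omega,\partial\rangle$ on $\Omega$, after tensoring with $F$ if necessary.

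Let $D$ be the $F$-span of $\Delta$ and let $W\subset\Omega\otimes_K F$ be its annihilator. First, since the matrix $(\partial x_i)_{\partial,i}$ has rank $r=\rk(\partial x_i)$, the functionals coming from $D$ have rank at least $r$ on $\Omega$. Hence $\dim W\le t-r$. Second, the hypothesis $\partial y_i=y_i\partial x_i$ says exactly that
\[
\omega_i:=\frac{dy_i}{y_i}-dx_i\in W \quad\text{for all } i.
\]
Now suppose, for a contradiction, that $t<n+r$. Then $\dim W<n$, so the $\omega_i$ satisfy a nontrivial $F$-linear relation $\sum_i f_i\omega_i=0$.

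Choose such a relation with minimal support and normalise $f_1=1$. I then apply a Lie derivative $L_\partial$ for $\partial\in\Delta$. Because the derivations commute, $D$ is closed under brackets, so $W$ is an integrable distribution. Because each $\omega_i$ is closed, Cartan's formula gives $L_\partial\omega_i=d\langle\omega_i,\partial\rangle=0$. Hence $\sum_i(\partial f_i)\,\omega_i=0$ is again a relation. Its support is strictly smaller, since $\partial f_1=0$, so minimality forces $\partial f_i=0$ for every $\partial$. Thus all $f_i\in C$.

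This is the step I expect to need the most care. To make it rigorous I would work in the $F$-space of differentials of $F/C$, where $d$ and contraction with elements of $\Delta$ make sense. I would also check the identity $L_\partial(f\omega)=(\partial f)\omega+fL_\partial\omega$ carefully.

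We now have $\sum_i c_i\,dy_i/y_i=d\bigl(\sum_i c_i x_i\bigr)$ with $c_i\in C$ not all zero. Write the $c_i$ in terms of a $\QQ$-basis of their $\QQ$-span. This reduces us, after replacing the $y_i$ by monomials in them, to the following lemma of Ax. If $\sum_j e_j\,du_j/u_j=dv$ in $\Omega_{K/C}$ with the $e_j\in C$ linearly independent over $\QQ$, then each $du_j/u_j$ is $0$ and $dv=0$.

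I would prove this lemma by the residue argument. Specialise to a curve, i.e. a transcendence degree $1$ subextension over the algebraically closed field $C$. Take residues at each place: the residue of $dv$ is $0$, while that of the left side is $\sum_j e_j\operatorname{ord}(u_j)$ with $\operatorname{ord}(u_j)\in\ZZ$. By the $\QQ$-independence of the $e_j$, every $u_j$ has order $0$ at every place, so $u_j\in C$. A standard induction on transcendence degree extends this to the general case.

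Applying the lemma along one $\QQ$-basis coordinate gives integers $m_i$, not all zero, with $d\bigl(\sum_i m_i x_i\bigr)=0$ in $\Omega$. Therefore $\partial\bigl(\sum_i m_ix_i\bigr)=0$ for all $\partial$, so $\sum_i m_ix_i\in C$. This contradicts the $\QQ$-linear independence of the $x_i$ modulo $C$, and hence $t\ge n+r$. The main obstacles are the minimal-relation step using commutativity and the residue lemma; the dimension count is routine.
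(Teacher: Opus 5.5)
The paper gives no proof of this statement; it simply quotes Ax's Theorem~3. Your proposal is essentially Ax's own argument (the annihilator dimension count, a minimal $F$-linear relation among the closed forms $dy_i/y_i-dx_i$ made constant via $d$ and contraction with each single $\partial$, then the residue lemma), so it is correct. Only small repairs are needed:
\begin{itemize}
\item Commutativity of $\Delta$ is not assumed in this statement, and your computation does not use it, since Cartan's formula is applied to one $\partial$ at a time.
\item $C$ is merely algebraically closed in $F$, not algebraically closed. Before taking residues, base change to $\overline{C}$; this is harmless because $K/C$ is regular.
\item Run the residue step relative to a subfield $K_1\subset K$ with $K$ of transcendence degree one over $K_1$, rather than on a ``transcendence degree one subextension'' of $K/C$, which need not contain the $u_j$ and $v$.
\item Your argument, like Ax's theorem, needs $y_i\neq 0$. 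The statement as printed omits this hypothesis and is false without it.
\end{itemize}
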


As we mentioned before, the work of Kirby in \cite{kirby-expalg} (along with fixes provided in \cite{aek-differentialEC,aek-closureoperator}) for abstract exponential fields uses Theorem \ref{thm:ax} to deduce the existence of non-trivial derivations respecting exponentiation in the case of $(\CC,\exp)$ and $(\RR,\exp)$.
Although not mentioned in \cite{kirby-expalg}, the same argument would give the existence of such derivations in $(\CC_p,\exp)$. 
From this one gets the following two corollaries, both of which can be seen as weak forms of Conjecture \ref{conj:$p$-adicschanuel}. 

\begin{cor}[cf {{\cite[Theorem 1.4]{kirby-expalg}}}]
\label{cor:countable.subfield.schanuel}
    There exists a countable subfield $C$ of $\CC_p$ such that if $z_1, \ldots, z_n\in \DD_p$ are $\QQ$-linearly independent modulo $C$, then
    \[
    \mathrm{tr.deg.}_C C(z_1,\ldots,z_n,\exp(z_1),\ldots,\exp(z_n))> n.
    \]
\end{cor}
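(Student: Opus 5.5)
We take $C = \acl(\emptyset)$, computed in the countable language $\cL$ of Corollary~\ref{cor:countable.language.Cpexp}, and combine the derivations of Theorem~\ref{thm:derivations.text} with Ax's Theorem~\ref{thm:ax}.

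First, $C$ is countable. Since $\cL$ is countable, there are only countably many $\cL$-formulas without parameters, and each one witnessing algebraicity has only finitely many solutions. Moreover, $C$ is a subfield of $\CC_p$, since it is closed under the field operations.

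Next, apply Theorem~\ref{thm:derivations.text} to $K=\CC_p$ with this $\cL$-structure. This gives a family $\Delta=(\partial_\tau)_{\tau\in\cB}$ of commuting derivations with $\bigcap_\tau\ker\partial_\tau = C$. We need these derivations to respect $\exp$. The function $\exp:\DD_p\to 1+\DD_p$ is $\emptyset$-definable in $\cL$ and $C^1$ with $\exp'=\exp$ on the open set $\DD_p$. So property~(1) gives $\partial_\tau(\exp z)=\exp(z)\,\partial_\tau z$ for all $z\in\DD_p$ and all $\tau$. Set $F=\CC_p$, $x_i=z_i$ and $y_i=\exp(z_i)$. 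Then $(F,\Delta)$ is a differential field of characteristic zero with field of constants $C$, and the hypothesis $\partial y_i = y_i\partial x_i$ of Theorem~\ref{thm:ax} holds. Since the $z_i$ are $\QQ$-linearly independent modulo $C$, Ax's theorem gives
\[
\trdeg_C C(\mathbf z,\exp(\mathbf z))\geq n+\rk(\partial_\tau z_i)_{i,\tau}.
\]

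The strict inequality comes down to showing that the rank is at least $1$, which means some $\partial_\tau z_i\neq 0$. The $z_i$ are $\QQ$-linearly independent modulo $C$, so in particular $z_1\notin C$, since otherwise $1\cdot z_1\in C$. By property~(2), $z_1\notin\bigcap_\tau\ker\partial_\tau$, so $\partial_\tau z_1\neq 0$ for some $\tau$. Hence the rank is at least $1$ and the transcendence degree is at least $n+1>n$.

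The main point to double-check is that $\exp$ is genuinely $\emptyset$-definable in $\cL$, with no parameters such as the $\alpha$ used in the proof of Corollary~\ref{cor:countable.language.Cpexp}. This holds because $\exp$ is placed directly into $\cL_0\subset\cL$ in that proof. Alternatively, if we added finitely many parameters, $C$ would still be countable.
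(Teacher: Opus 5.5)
Your proof is correct and is essentially the paper's argument: take $C=\bigcap_\tau\ker\partial_\tau=\acl(\emptyset)$ for the derivations of Theorem~\ref{thm:derivations.text} in the countable language of Corollary~\ref{cor:countable.language.Cpexp}, then apply Ax's Theorem~\ref{thm:ax}. You also spell out two points the paper leaves implicit: why $C$ is countable, and why the rank term is at least $1$ (from $z_1\notin C$, for $n\geq 1$), which is what gives the strict inequality.
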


\begin{proof}
Consider $\CC_p$ as an $\cL$-structure, where $\cL$ is the countable language from Corollary~\ref{cor:countable.language.Cpexp}. 
Let $\Delta=(\partial_\tau)_{\tau\in \cB}$ be the collection of derivations on $\CC_p$ constructed using Theorem~\ref{thm:derivations}. 
We take $C = \bigcap_\tau \ker \partial_\tau$, which is indeed a countable subfield by Theorem~\ref{thm:derivations}(\ref{it:kernel.derivations}). 
If $z_1, \ldots, z_n\in \DD_p$ are  $\QQ$-linearly independent modulo $C$, we conclude via Theorem \ref{thm:ax}.
\end{proof}

\begin{defn}
    Let $\Delta_{\exp}$ denote the collection of all field derivations on $\CC_p$ which respect $\exp$, and define
    \[C_{\exp}:=\bigcap_{\partial\in\Delta_{\exp}}\ker\partial.\]
    As we saw in the proof of Corollary \ref{cor:countable.subfield.schanuel}, $C_{\exp}$ is a countable set. 
    An element $\lambda\in\mathbb{C}_p$ is called \emph{exponentially transcendental} if $\lambda\notin C_{\exp}$. 
\end{defn}

A somewhat more explicit description of the elements of $C_{\exp}$ can be given as solutions to \emph{Khovanskii systems}, see \cite[Theorem 1.1]{kirb-weierstrass}. 

By \cite[Theorem 1.2]{bays-kirby-wilkie} we also get the following version of Conjecture \ref{conj:$p$-adicschanuel} for exponentially transcendental powers.

\begin{cor}
    Suppose $\lambda\in\DD_p$ is exponentially transcendental. 
    Then for all $x_1,\ldots,x_n\in\DD_p$ which are $\QQ$-linearly independent we have
    \[\mathrm{tr.deg.}_{\QQ(\lambda)}\QQ(\lambda, \exp(x_1),\ldots,\exp(x_n),\exp(\lambda x_1),\ldots,\exp(\lambda x_n))\geq n.\]
\end{cor}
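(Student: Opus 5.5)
The plan is to reduce the statement to \cite[Theorem 1.2]{bays-kirby-wilkie}, after checking that its abstract hypotheses hold for $(\CC_p,\exp)$ on the domain $\DD_p$. That theorem is formulated for an exponential field together with the pregeometry of derivations respecting $\exp$. Its proof uses two inputs: the existence of enough derivations respecting $\exp$, and Ax's theorem (Theorem~\ref{thm:ax}). Its conclusion is the inequality in the statement, with $\lambda$ assumed exponentially transcendental, meaning $\lambda\notin C_{\exp}$.

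\textbf{Step 1: produce a derivation that kills $C_{\exp}$ and moves $\lambda$.} Since $\lambda\notin C_{\exp}$, by definition there is a derivation $\partial\in\Delta_{\exp}$ with $\partial\lambda\neq 0$. We need a collection $\Delta'\subseteq\Delta_{\exp}$ whose common field of constants $C'$ excludes $\lambda$ and is small, ideally equal to $C_{\exp}$. Theorem~\ref{thm:derivations}, applied to the countable language of Corollary~\ref{cor:countable.language.Cpexp}, supplies commuting derivations respecting $\exp$ whose common kernel is $\acl(\emptyset)\supseteq C_{\exp}$. To get $\lambda$ outside the constants, we apply the same theorem over a base on which $\lambda$ is not algebraic. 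Alternatively, we simply take all of $\Delta_{\exp}$, whose constant field is $C_{\exp}$ by definition, and note that Ax's theorem needs no commutativity.

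\textbf{Step 2: set up the field and apply Ax's theorem.} Put $y_i=\exp(x_i)$ and $w_i=\exp(\lambda x_i)$. These are defined because $\lambda\in\DD_p$ forces $v(\lambda x_i)\geq v(x_i)>1/(p-1)$, so $\lambda x_i\in\DD_p$. Every $\partial\in\Delta_{\exp}$ satisfies
\[
\partial y_i=y_i\,\partial x_i, \qquad \partial w_i=w_i\,\partial(\lambda x_i).
\]
We then run the argument of \cite{bays-kirby-wilkie}. Apply Theorem~\ref{thm:ax} to the $2n$ elements $x_1,\ldots,x_n,\lambda x_1,\ldots,\lambda x_n$ over $C_{\exp}(\lambda)$. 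The comparison with transcendence degree over $\QQ(\lambda)$ then uses the rank term $\rk(\partial x_i)$, together with the linear relation $\partial(\lambda x_i)=\lambda\,\partial x_i+x_i\,\partial\lambda$. This produces the bound $n$ without needing the $x_i$ themselves.

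\textbf{Main obstacle.} The hard step is the linear-independence hypothesis. The $x_i$ are only $\QQ$-linearly independent, not independent modulo constants, so one must split into cases according to the $\QQ$-linear dimension of $\{x_i\}$ modulo $C_{\exp}$. The constant part must then be handled using the fact that $\exp$ maps $C_{\exp}\cap\DD_p$ into $C_{\exp}$, which holds because $C_{\exp}$ is closed under respected functions. This case analysis is exactly what \cite[Theorem 1.2]{bays-kirby-wilkie} carries out abstractly. I would first check that its proof uses only the derivation-theoretic properties verified above and never uses totality of $\exp$. If it does use totality, I would restrict to the partial exponential field $(\CC_p,\exp\upharpoonright_{\DD_p})$, which is closed under multiplication by $\lambda$ since $\lambda\in\DD_p$.
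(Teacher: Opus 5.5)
Your proposal matches the paper, which gets this corollary by a bare appeal to \cite[Theorem 1.2]{bays-kirby-wilkie}, relying on the surrounding discussion (derivations respecting $\exp$, the constant field $C_{\exp}$, Ax's theorem) to supply its hypotheses; your extra checks, including $\lambda\DD_p\subseteq\DD_p$, are the right ones, and you should use your second option in Step 1 (all of $\Delta_{\exp}$), since $\lambda\notin C_{\exp}$ does not guarantee $\lambda\notin\acl(\emptyset)$. One slip to fix: in Step 2, Ax's theorem must be applied with constant field $C_{\exp}$, not $C_{\exp}(\lambda)$, because the derivations witnessing exponential transcendence move $\lambda$; the field $C_{\exp}(\lambda)$ should only appear when you compare transcendence degrees at the end.
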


\section{Blurring Tate Uniformization}
\label{sec:blurredtate}

In the rest of the paper we will shift our attention from $p$-adic exponentiation to Tate's uniformization map for elliptic curves. 

Throughout this section we fix some $q\in \CC_p, v(q) > 0$ and work with the elliptic curve $E = E_q$ with uniformization map $\phi = \phi_q$.
We begin by showing a differential Ax--Schanuel result for Tate's uniformization map. 
Recall that this map satisfies the differential equation (\ref{eq:diffeqtate}).
The result will follow from a mixed Ax--Schanuel theorem which combines elliptic exponential maps with the usual exponential, and which may be found in work of Ax \cite{ax2} and Kirby \cite{kirb-weierstrass}.

\begin{prop}[Ax--Schanuel for Tate's uniformization map]\label{prop:ax.schanuel.tate}
    Let $(K,\Delta)$ be a differential field of characteristic zero where $\Delta$ is a set of $m$ commuting derivations, and let $C$ denote its field of constants.
    Let $f\in K[t]$ be a cubic polynomial without repeated roots and such that the elliptic curve defined by $y^2 = f(t)$ does not have complex multiplication. 
    Suppose $u_1,\ldots,u_n,y_1,\ldots,y_n\in K^\times$ are such that
    $(u_i\partial y_i)^2 = f(y_i)(\partial u_i)^2$ for all $\partial\in\Delta$ and all $i\in\{1,\ldots,n\}$. 
    If $u_1,\ldots,u_n$ are multiplicatively independent modulo $C$, then 
    \[\mathrm{tr.deg.}_CC(u_1,\ldots ,u_n,y_1,\ldots,y_n)\geq n+\mathrm{rank}(\partial u_i)_{i\in\{1,\ldots,n\}, \partial\in\Delta}.\]
\end{prop}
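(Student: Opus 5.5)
Our plan is to reduce the statement to the mixed Ax--Schanuel theorem for products of semiabelian varieties, in the differential-field form of Ax~\cite{ax2} and Kirby~\cite{kirb-weierstrass}. That theorem concerns a commutative algebraic group $G$ over $C$, the logarithmic derivative $\mathrm{ld}_G: G(K)\to L(G)\otimes K$, and points $x\in L(G)(K)$, $g\in G(K)$ with $\mathrm{ld}_G(g)=\partial x$ for all $\partial\in\Delta$. It gives $\mathrm{tr.deg.}_C C(x,g)\ge \dim G+\mathrm{rank}(\partial x)$ unless $g$ lies in a coset of a proper algebraic subgroup defined over $C$. The natural choice is $G=\GG_m^n\times \mathcal{E}^n$, where $\mathcal{E}$ is the curve $y^2=f(t)$. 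Since $f\in K[t]$, we first reduce to $f\in C[t]$. The hypothesis $(u_i\partial y_i)^2=f(y_i)(\partial u_i)^2$ for all $\partial$ forces the coefficients of $f$ to be constants, at least generically. Alternatively, we take this as part of the setup, as in the application where $f=4t^3+t^2+4a_4t+4a_6$ has coefficients in $\overline{\QQ}(q)\subset C$.

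\textbf{Encoding the data.} Put $w_i=\partial u_i/u_i$, which plays the role of $\partial\log u_i$. Define $z_i$ by $\partial y_i = z_i\, \partial u_i/u_i$ (when $\partial u_i\neq0$), so that $z_i^2=f(y_i)$; we choose the sign of $z_i$ uniformly in $\partial$. This choice needs an argument: for two derivations $\partial,\partial'$ the ratio $\partial y_i/\partial u_i$ must agree. That follows because $y_i$ and $u_i$ are algebraically related to first order, so $dy_i$ is proportional to $du_i$ in the K\"ahler differentials, or more simply because both ratios square to $f(y_i)/u_i^2$ and are constant-coefficient linear combinations. The point $P_i=(y_i,z_i)\in\mathcal{E}(K)$ then satisfies $\mathrm{ld}_{\mathcal{E}}(P_i)=\partial y_i/z_i=\partial u_i/u_i=\mathrm{ld}_{\GG_m}(u_i)$. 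We therefore apply the semiabelian Ax theorem to $G=\GG_m^n\times\mathcal{E}^n$, with the point $(u,P)$ and the common logarithm $x=(x_i,x_i)$. In the abstract differential setting the $x_i$ may have to be adjoined formally, via Kirby's version, which works directly with the $\mathrm{ld}$-equations and not with logarithms.

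\textbf{Excluding subgroups.} The main obstacle is ruling out that $(u,P)$ lies in a coset of a proper algebraic subgroup of $G$. By Kolchin/Ax, any such subgroup relevant to the graph-type relation $\mathrm{ld}(u_i)=\mathrm{ld}(P_i)$ projects to a subgroup of $\GG_m^n$ or of $\mathcal{E}^n$. This uses that $\mathrm{Hom}(\GG_m,\mathcal{E})=0$, so subgroups of $\GG_m^n\times\mathcal{E}^n$ split as products. Since $\mathcal{E}$ has no CM, $\mathrm{End}(\mathcal{E})=\ZZ$, and every proper subgroup of $\mathcal{E}^n$ is cut out by $\ZZ$-linear relations. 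A subgroup relation on the $\GG_m$ side contradicts the multiplicative independence of the $u_i$ modulo $C$. A relation $\sum m_iP_i\in\mathcal{E}(C)$ gives $\sum m_i\,\mathrm{ld}(P_i)=0$, hence $\sum m_i\,\partial u_i/u_i=0$ for all $\partial$. That means $\prod u_i^{m_i}$ is a constant, again contradicting independence. We would also handle the degenerate case with no $\partial u_i\neq0$, where the rank is zero and the bound $n$ follows from independence of the $u_i$ via Ax's $\GG_m$ case.

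\textbf{Counting.} The theorem gives $\mathrm{tr.deg.}_C C(x,u,P)\ge 2n+\mathrm{rank}$. Dropping the $n$ logarithm variables $x$, and noting that $z_i$ is algebraic over $C(y_i)$, leaves $\mathrm{tr.deg.}_C C(u,y)\ge n+\mathrm{rank}(\partial u_i)$. We would have to check that $x$ is exactly the right tuple to drop, namely that it has only $n$ coordinates because the two blocks share the same logarithm. Alternatively, we would cite Kirby's formulation for $\mathrm{ld}$-equations directly, which avoids the logarithms altogether.
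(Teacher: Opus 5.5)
\textbf{There is a genuine gap, at the sign choice, and it cannot be closed as the statement is written.} Your skeleton matches the paper's: adjoin logarithms with $\partial x_i=\partial u_i/u_i$, apply the mixed $\GG_m$/elliptic Ax--Schanuel theorem with a common logarithm, and drop the $n$ logarithms at the end. The paper quotes Kirby's Proposition~3.2, whose exceptional case is already ``$x_1,\dots,x_n$ are $\QQ$-linearly dependent modulo $C$''. You instead use the semiabelian form for $\GG_m^n\times\mathcal{E}^n$ and exclude cosets by hand via $\mathrm{Hom}(\GG_m,\mathcal{E})=0$ and $\mathrm{End}(\mathcal{E})=\ZZ$; that part is fine.

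The gap is where you fix the sign of $z_i$ uniformly in $\partial$. Your claim that $\partial y_i/\partial u_i$ is the same for all $\partial\in\Delta$ is false, and neither justification works. The elements $u_i$ and $y_i$ need not be algebraically related, and the hypothesis constrains each $\partial\in\Delta$ separately, not their linear combinations. Take meromorphic functions of $(s_1,s_2)$ with $\partial_j=\partial/\partial s_j$, $C=\CC$, and $f$ the cubic of a non-CM Weierstrass function $\wp$. Put $u_1=e^{s_1+s_2}$ and $y_1=\wp(s_1-s_2)$. Then $u_1\partial_1y_1=\wp'(s_1-s_2)\,\partial_1u_1$ but $u_1\partial_2y_1=-\wp'(s_1-s_2)\,\partial_2u_1$, so the squared hypothesis holds for both derivations with opposite signs. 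Now add $u_2=e^{s_1-s_2}$ and $y_2=\wp(s_1-s_2)$. The $u_i$ are multiplicatively independent modulo $\CC$ and $\mathrm{rank}(\partial u_i)=2$. Yet $y_1=y_2$ gives $\mathrm{tr.deg.}_{\CC}\CC(u_1,u_2,y_1,y_2)=3<4$, so the statement itself fails. With your encoding you get $P_1=P_2$ while $u_1/u_2=e^{2s_2}$ is not constant, which is exactly where your coset exclusion breaks.

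Your encoding also breaks when $f(y_i)=0$. Then $P_i$ is a constant $2$-torsion point, so $\mathrm{ld}_{\mathcal{E}}(P_i)=0\neq\partial u_i/u_i$, and again the statement fails. This happens already for $m=1$: with $K=\CC(s)$, $\partial=d/ds$, $u_1=s$ and $y_1$ a nonzero root of $f$, the transcendence degree is $1<2$. Likewise, your claim that the hypothesis forces the coefficients of $f$ into $C$ is false. With $K=C(s)$, $\partial=d/ds$, $u_1=y_1=s$ and $f(t)=4t^3+st-4s^3$ you get another counterexample, so $f\in C[t]$ must be assumed, as in your alternative.

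The paper's proof does not address these points either; it only records the squared relation $(\partial y_i)^2=f(y_i)(\partial x_i)^2$ before citing Kirby. So the fix belongs in the hypothesis: take $f\in C[t]$, and require $z_i\in K^\times$ with $z_i^2=f(y_i)$ and $u_i\,\partial y_i=z_i\,\partial u_i$ for every $\partial\in\Delta$. The application supplies such a uniform multiplier, since there the derivations respect $\phi$ and $\phi_1'$ gives a factor independent of $\partial$. Under that hypothesis your encoding, coset exclusion and count go through as written.
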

\begin{proof}
    By passing to a differential field extension if necessary, we let $x_1,\ldots,x_n\in K^\times$ be such that $\frac{\partial u_i}{u_i}=\partial x_i$ for all $\partial\in \Delta$ and all $i\in\{1,\ldots,n\}$. 
    Then the equation $(u_i\partial y_i)^2 = f(y_i)(\partial u_i)^2$ implies that $(\partial y_i)^2 = f(y_i)(\partial x_i)^2$.
    By \cite[Proposition 3.2]{kirb-weierstrass} (see also \cite{kirby-weierstrasscorrigendum}) we have that
    \[\mathrm{tr.deg.}_CC(x_1,\ldots,x_n,u_1,\ldots,u_n,y_1,\ldots,y_n)\geq 2n+\mathrm{rank}(\partial x_i)_{i\in\{1,\ldots,n\}, \partial\in\Delta}\]
    unless $x_1,\ldots,x_n$ are $\QQ$-linearly dependent modulo $C$, which is equivalent to saying that $u_1,\ldots,u_n$ are multiplicatively dependent modulo $C$. 
    
    Finally, we observe that any $a_1,\ldots,a_n\in K$ we have
    \[a_1\begin{pmatrix}
        \partial_1 x_1\\
        \vdots\\
        \partial_m x_1
    \end{pmatrix} +\cdots + a_n\begin{pmatrix}
        \partial_1 x_n\\
        \vdots\\
        \partial_m x_n
    \end{pmatrix} = 0\implies \frac{a_1}{u_1}\begin{pmatrix}
        \partial_1 u_1\\
        \vdots\\
        \partial_m u_1
    \end{pmatrix} + \cdots + \frac{a_n}{u_n}\begin{pmatrix}
        \partial_1 u_n\\
        \vdots\\
        \partial_m u_n
    \end{pmatrix} = 0,\]
    showing that $\mathrm{rank}(\partial u_i)_{i\in\{1,\ldots,n\}, \partial\in\Delta}\leq \mathrm{rank}(\partial x_i)_{i\in\{1,\ldots,n\}, \partial\in\Delta}$, which completes the proof. 
\end{proof}

There is a natural action of $\MM_n(\ZZ)$ (the set of $n\times n$ matrices with coefficients in $\ZZ$) on $\GG_m^n$.
Indeed, denote the entries of $M=(m_{ij})_{i,j}$, and let  $\mathbf{w}=(w_{1},\ldots,w_{n})$ be any element of $\mathbb{G}_m^n$. 
We define the action in the following way: 
\begin{equation*}
\mathbf{w}^M:=\left(\prod_{j=1}^{n}w_j^{m_{1,j}},\ldots,\prod_{j=1}^{n}w_j^{m_{n,j}}\right).
\end{equation*}
We also have an action of $\MM_n(\ZZ)$ on $E^n$ given by, 
\[M\mathbf{x}:=\left(\sum_{j=1}^nm_{1,j}x_{j},\ldots,\sum_{j=1}^nm_{n,j}x_{j}\right),\]
and therefore combining both definitions, $\MM_n(\ZZ)$ also acts on $\GG_m^n\times E^n$ by setting $M(\mathbf{x},\mathbf{y}):=\left(\mathbf{x}^M, M\mathbf{y}\right)$.

\begin{defn}
    Let $V\subset \GG_m^n\times E^n$ be an irreducible algebraic variety.
    We say that $V$ is \emph{rotund} if for every $M\in \MM_n(\ZZ)$ we have $\dim(M V)\geq \rk(M)$.
\end{defn}

\begin{lem}\label{lem:V_M}
    Let $V\subset \GG_m^n\times E^n$ be an irreducible rotund algebraic variety, and let $M\in \MM_n(\ZZ)$ have rank $n-r$ for some $r\geq 1$.
    Let $T\subset \GG_m^n\times E^n$ be the algebraic subgroup given by all $(X,Y)\in \GG_m^n\times E^n$ for which $X^M = 1$ and $MY = 0$.
    Then there exists a non-empty Zariski-open subset $V_M\subset V$ such that for $\gamma\in V_M$ we have 
    \[
    \dim (V\cap \gamma + T) \leq \dim V - n + r.
    \]
\end{lem}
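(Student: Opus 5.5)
The plan is to view the fibres of the quotient by $T$ and apply rotundity together with the fibre-dimension theorem. The map $M: \GG_m^n\times E^n\to \GG_m^n\times E^n$, $(X,Y)\mapsto (X^M, MY)$, is a homomorphism of algebraic groups with kernel exactly $T$. Hence for $\gamma\in V$ the fibre of $M|_V: V\to MV$ over $M\gamma$ is precisely $V\cap(\gamma+T)$, where $\gamma + T$ denotes the coset in the group $\GG_m^n\times E^n$.

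Next I would apply the theorem on dimension of fibres to the dominant morphism $M|_V: V\to \overline{MV}$ of irreducible varieties. Here $MV$ is constructible and its closure is irreducible with $\dim \overline{MV} = \dim MV$. The theorem gives a non-empty Zariski-open $W\subset \overline{MV}$ such that every fibre over a point of $W$ has pure dimension $\dim V - \dim MV$. I would then set $V_M := (M|_V)^{-1}(W)$, which is non-empty and Zariski-open in $V$ because $M|_V$ is dominant.

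Finally, rotundity gives $\dim MV \geq \rk(M) = n-r$, so for $\gamma\in V_M$ we get
\[
\dim (V\cap \gamma + T) = \dim V - \dim MV \leq \dim V - n + r.
\]

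The only point needing care is that the dimension-theoretic upper semicontinuity statement must be applied to a dominant map of irreducible varieties; passing to $\overline{MV}$ handles this. Note that the rank of $M$ as an integer matrix equals $n - \dim\ker$ of the induced map on the tori and elliptic parts, but only the rotundity inequality is actually used. I expect no genuine obstacle beyond this bookkeeping.
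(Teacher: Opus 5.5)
Your proposal is correct and is essentially the paper's own proof. The paper also takes the map $(X,Y)\mapsto (X^M, MY)$ on $V$, identifies its fibres with $V\cap(\gamma+T)$, uses the fibre-dimension theorem to get $V_M$ on which $\dim(V\cap \gamma+T) = \dim V - \dim MV$, and concludes by rotundity; your passage to $\overline{MV}$ to make the map dominant only spells out a step the paper leaves implicit.
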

\begin{proof}
    Consider the regular map
    \[
    F: V\to \GG_m^n\times E^n: (X,Y)\mapsto (X^M, MY).
    \]
    The fibres of $F$ are of the form $V\cap \gamma + T$. 
    By the fibre-dimension theorem, there exists a non-empty Zariski open subset $V_M$ of $V$ such that for $\gamma \in V_M$ we have
    \[
    \dim(V\cap \gamma + T) = \dim(V)-\dim(MV).
    \]
    By rotundity, $\dim(MV)\geq \rk M = n-r$, and hence $\dim(V\cap \gamma+T) \leq \dim(V) - n + r$.
\end{proof}


\begin{proof}[Proof of Theorem~\ref{thm:tateblurring}]
    Write $m = \dim V$ and note that since $V$ is rotund we have that $m\geq n$.
    Consider the map
    \[
    \theta: V\to E^n: (\mathbf{x},\mathbf{y})\mapsto \mathbf{y}-\phi(\mathbf{x}).
    \]
    We will show that most fibres of $\theta$ are locally of dimension at most $m-n$, from which the result will follow.

    Let $\cL_0$ be the countable language from Corollary~\ref{cor:countable.language.Cpexp} expanded by a symbol for $q$.
    This gives a $1$-h-minimal structure on $\CC_p$.
    
    Take $\mathbf{v}\in V$.
    Then there is an open definable neighbourhood $V'\subset V$ of $\mathbf{v}$ in $V$ such that the restriction of $\theta$ to $V'$ is $\cL_0$-definable.
    Note that the local dimension of $V'$ at every point is equal to $m$.
    Put $t = \theta(\mathbf{v})$ and $A = \theta^{-1}(t)\cap V'$, which is an $\cL$-definable subset of $V$.
    Put $\cL = \cL_0(t)$.

    Assume that $\dim A > m-n$. 
    By~\cite[Lem.\,5.3.5]{CHR} dimension in h-minimal structures coincides with $\acl$-dimension, and so there exists a point $(x_1, \ldots, x_n, y_1, \ldots, y_n)\in A$ whose $\acl$-dimension is at least $m-n+1$.
    Since $\phi$ is $\cL$-definable, we find $1\leq i_1 < \ldots < i_{m-n+1}\leq n$ such that the elements $x_{i_1}, \ldots, x_{i_{m-n+1}}$ are $\acl$-independent.
    The proof of Theorem~\ref{thm:derivations} gives $m-n+1$ commuting derivations $\partial_1, \ldots, \partial_{m-n+1}$ with the following properties:
    \begin{enumerate}
        \item $\partial_j x_{i_\ell} = 1$ if $j=\ell$ and $0$ else,
        \item $\partial_j t_i = 0$ for every $i,j$, and
        \item $\partial_j$ respects $\phi$.
    \end{enumerate}
    
    Write $y_i = (y_{i1}, y_{i2}), t_i = (t_{i1}, t_{i2})$ and $\phi = (\phi_1, \phi_2)$.
    Then we have that $y_{i1} = t_{i1} + \phi_1(x_i)$ and hence $\partial_j y_{i1} = \partial_j (\phi_1(x_i))$.
    Since $\partial_j$ respects $\phi$, we therefore obtain that
    \[
    (x_i \partial_j (y_{i1}-t_{i1}))^2 = f(y_{i1}-t_{i1})(\partial_j x_i)^2,
    \]
    where $f(z) = 4z^3 + z^2 + 4a_4(q)z + 4a_6(q)$ is the polynomial appearing in the differential equation (\ref{eq:diffeqtate}) for $\phi_1$.

    Let $C = \cap_j \ker \partial_j$.
    Since $\partial_j$ respects $\phi$, we see that $C$ is closed under taking $\phi$.
    Let $k$ be the smallest integer for which there exists a matrix $M\in \MM_n(\ZZ)$ of rank $n-k$ such that
    \[
    \gamma = (\mathbf{x}^M, M\mathbf{y}) \in \GG_m^n(C)\times E^n(C).
    \]
    Let $T\leq \GG_m^n\times E^n$ be the subgroup defined by $\mathbf{X}^M = 1$ and $M\mathbf{Y} = 0$, so that $(\mathbf{x},\mathbf{y}) = \gamma\in T$.
    Consider $F = C(\mathbf{x},\mathbf{y})$, so that $(\mathbf{x},\mathbf{y})$ is an $F$-point on the algebraic variety $V\cap (\gamma+T)$ which is defined over $C$. 
    Hence
    \[
    \trdeg_C C(\mathbf{x},\mathbf{y})\leq \dim(V\cap (\gamma+T)).
    \]
    By Proposition~\ref{prop:ax.schanuel.tate} we have $\trdeg_C C(\mathbf{x},\mathbf{y}) -k\geq m-n+1 = \rk(\partial_j x_{i_\ell})_{j,\ell}$ and hence
    \[
    \dim(V\cap (\gamma+T)) > k + m-n.
    \]

    For $M\in \MM_n(\ZZ)$ of rank at most $n-1$ let $V_M\subset V$ be the non-empty Zariski-open subset of $V$ from Lemma~\ref{lem:V_M}.
    Consider the set
    \[
    W = V'\cap  \bigcap_{M} V_M(\CC_p)
    \]
    where the final intersection is over all $M\in \MM_n(\ZZ)$ of rank at most $n-1$.
    Note that this is the intersection of $V'$ with countably many non-empty Zariski-open subsets of $V$. 
    Since $\CC_p$ is uncountable, $W$ is therefore dense in $V'$ in the $p$-adic topology.
    Let $V_0\subset V$ consist of all $v\in V'$ for which $\dim(V'\cap \theta^{-1}(\theta(v))) \leq m-n$, which is a definable subset of $V'$.
    By the arguments above, $W$ is contained in $V_0$.
    We claim that for every open subset $B\subset V'$ we have $\dim V_0\cap B = m$.
    To see this, note that $W\cap B$ is dense in $B$, and hence since $W\subset V_0$, also $V_0\cap B$ is dense in $B$.
    But both $B$ and $V_0\cap B$ are definable and $\dim B = m$, so that also $\dim V_0\cap B = m$.

    Now, dimension theory implies that $\theta(B\cap V_0)\subset E^n$ has dimension $n$ and so has non-empty interior.
    Therefore, $\theta(B\cap V_0)$ contains an elements with coordinates from $Q$, say $\mathbf{r}$.
    This means that for some $(\mathbf{x},\mathbf{y})\in B$ we have
    \[
    \theta(\mathbf{x},\mathbf{y})=\mathbf{y}-\phi(\mathbf{x}) = \mathbf{r}. \qedhere
    \]
\end{proof}

\subsection{Quasiminimality}
\label{sec:quasiminimality}

\begin{defn}
    A first-order structure $M$ is said to be \emph{quasiminimal} if for every countable subset $A\subseteq M$ and every subset $S\subseteq M$, if $S$ is invariant under the action of $\mathrm{Aut}(M/A)$, then $S$ is countable or its complement in $M$ is countable. 
\end{defn}

Given a subset $Q\subseteq E$, define the set
\[T_Q:=\{(x,y)\in \CC_p^\times\times E : \exists b\in Q : y = \phi( x +  b)\}.\]
We call $T_Q$ the \emph{blurring of $\phi$ by $Q$}. 

Blurrings have previously been considered for other functions. 
In \cite{kirby2019blurred}, Kirby shows that some blurrings of the complex exponential define quasiminimal structures on $\CC$. 
An analogous result for blurrings of the modular $j$-function are obtained in \cite[\S 5]{aslanyan-kirby}.
In our setting, it also makes sense to ask whether $(\CC_p, +, \cdot, T_Q)$ is quasiminimal. 
It is important to note that this structure does not include the valuation, one should think about it as the expansion of the pure field structure on $\CC_p$ by the relation $T_Q$.

Whether $(\CC_p, +, \cdot, T_Q)$ is quasiminimal depends on the choice of $Q$.
The details behind the discussion below would take us far away from any matters related to 1-h-minimality, so we have not pursued them here, but we give the following overview for the interested reader. 
As explained in \cite[Corollary 11.7]{bays-kirby} and \cite[\S5]{aslanyan-kirby}, to show $(\CC_p, +, \cdot, T_Q)$ is quasiminimal, two conditions need to be verified: the first is referred to as \emph{$\Gamma$-closedness} in \cite{bays-kirby} and is guaranteed by Theorem \ref{thm:tateblurring} when $Q$ is $p$-adically dense in $E^n$, and the second one which is called the \emph{countable closure property}. 
Regarding this second condition, one can in fact show something stronger, namely that the structure $(\CC_p, +, \cdot, \phi)$ has the countable closure property, for example by replicating the proof presented in \cite[Lemma 5.12]{zilberexp} for exponentiation. 
But for our purposes, the results presented in \cite[\S10D]{bays-kirby} already show that, in our case, this property follows from a differential form of Ax--Schanuel, which we have in Proposition \ref{prop:ax.schanuel.tate}.  

The results in \cite{aslanyan-kirby} and \cite{kirby2019blurred} also show that if one chooses the blurring carefully, then not only is the resulting structure quasiminimal, but one is even able to axiomatize the first-order theory of the blurred structure by using Hrushovski--Fra\"iss\'e amalgamation constructions.
Informed by these results, we expect that if one chooses $Q$ to be intersection of the kernels of all derivations on $\CC_p$ which respect $\phi$ (recall that by Theorem \ref{thm:derivations} non-trivial derivations respecting $\phi$ exist, and the intersection of their kernels is countable) then a similar result can be obtained for the first-order theory of $(\CC_p, +, \cdot, T_Q)$.

\section{Likely Intersections}\label{sec:likely}

In this section, we will give a $p$-adic analog of a result about Euclidean density of likely intersections given in \cite{eterovic-scanlon} in the context of elliptic curves admitting a Tate uniformization. 

\subsection{A dimension estimate}
Call a subset $X\subset \CC_p^n$ \emph{analytic} if for every $x\in X$ there is a closed box $B$ around $x$, such that on $B$, $X$ is defined by the vanishing of finitely many strictly convergent power series.
We will need a lower bound on the dimension of an intersection of two analytic subsets of $\CC_p^n$. 
For notions coming from rigid analytic geometry, such as affinoid and $\mathrm{Spa}(A)$, we refer the reader to \cite{fresnel-vanderput:rigid,huber:contval}.

If $X\subset \cO_{\CC_p}^n$ is an affinoid, then $X(\CC_p)\subset \cO_{\CC_p}^n$ is a definable set in the language from Section~\ref{sec:weierstrass} over the full Weierstrass system.
In that case, the dimension of $X$ as a rigid analytic space coincides with the 1-h-minimal dimension of $X(\CC_p)$.

\begin{prop}\label{prop:dim.intersection}
    Let $X,Y$ be analytic subsets of $\CC_p^n$ and let $x\in X\cap Y$.
    Then
    \[
    \dim_x(X\cap Y)\geq \dim X + \dim Y-n.
    \]
\end{prop}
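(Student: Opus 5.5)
The plan is to reduce to the diagonal and apply Krull's principal ideal theorem in the local ring of an analytic germ. Throughout, $\dim X$ and $\dim Y$ are read as the dimensions of $X$ and $Y$ near $x$. That is, we assume $X$ and $Y$ are equidimensional at $x$, so that $\dim_x X=\dim X$ and $\dim_x Y=\dim Y$; this is how the proposition is used later, for pieces of algebraic varieties and their translates. Without this reading the inequality fails. For example, in $\CC_p^3$ take $X=\{z_3=0\}\cup\{z_1=z_2=1\}$ and $Y=\{z_3=1\}$, with $x=(1,1,1)$. Then $X\cap Y$ is isolated at $x$, so $\dim_x(X\cap Y)=0$, while $\dim X+\dim Y-n=2+2-3=1$. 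Since $\dim_x X\le \dim X$ always, the hypothesis is exactly what lets the local inequality imply the stated one. If $X$ is not equidimensional at $x$, we replace it by the union of the local analytic components through $x$ of maximal dimension, and do the same for $Y$.

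\textbf{Step 1 (reduction to the diagonal).} Let $\Delta\subset\CC_p^{2n}$ be the diagonal, cut out by the $n$ linear forms $z_i-w_i$. The projection $(z,w)\mapsto z$ restricts to a definable homeomorphism $(X\times Y)\cap\Delta\to X\cap Y$. Hence it suffices to show
\[
\dim_{(x,x)}\bigl((X\times Y)\cap\Delta\bigr)\ge \dim_{(x,x)}(X\times Y)-n .
\]
We also need $\dim_{(x,x)}(X\times Y)=\dim X+\dim Y$. This follows from additivity of dimension in fibrations (property (c) of the dimension theory in Section~\ref{sec:hmin}), applied to the projection of $(X\cap B)\times(Y\cap B')$ for small boxes $B,B'$ around $x$, together with equidimensionality.

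\textbf{Step 2 (algebraic translation).} Fix a small closed box around $(x,x)$ on which $X\times Y$ is the zero set of finitely many strictly convergent series. Let $R$ be the local ring at $(x,x)$ of the corresponding affinoid; this is a quotient of the ring of convergent power series at $(x,x)$, and it is Noetherian. The key claim is that the Krull dimension of such a local ring equals the $1$-h-minimal local dimension of the zero set at that point. To prove it, apply Weierstrass preparation (property (4) of Weierstrass systems) to obtain a Noether normalization: after a generic linear change of coordinates, some coordinate projection to $\CC_p^d$ is finite on the germ, with $d=\dim R$. A finite-to-one definable map with open image preserves $1$-h-minimal dimension, and finiteness of the map makes the image contain a neighbourhood of the origin in $\CC_p^d$. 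Together these give local dimension exactly $d$.

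\textbf{Step 3 (Krull).} Modding out by the $n$ elements $z_i-w_i$ lowers the dimension of a Noetherian local ring by at most $n$, by Krull's principal ideal theorem applied repeatedly. Thus the local ring of $(X\times Y)\cap\Delta$ at $(x,x)$ has dimension at least $\dim X+\dim Y-n$. Applying the comparison from Step 2 to this ring and then Step 1 gives the proposition.

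I expect the main obstacle to be the comparison in Step 2, between the Krull dimension of the analytic local ring and the h-minimal local dimension. Passing from the affinoid algebra over the box to its localization at $(x,x)$ requires care, because distinct irreducible components through $x$ may have different dimensions. This is exactly why we first restrict to the germ of maximal dimension.
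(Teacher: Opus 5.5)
Your argument is correct and is essentially the paper's proof: reduce to $(X\times Y)\cap\Delta$, then apply Krull's principal ideal theorem. The one difference is that you work in the local ring at $(x,x)$ rather than in an irreducible affinoid domain, which gives $\dim_x(X\cap Y)\ge \dim_x X+\dim_x Y-n$ directly, so the restriction to top-dimensional components you worry about at the end is unnecessary, since $\dim R/(f_1,\ldots,f_n)\ge\dim R-n$ holds in any Noetherian local ring. Your counterexample is also correct and shows that the paper's ``the statement is local'' reduction tacitly assumes $\dim_x X=\dim X$ and $\dim_x Y=\dim Y$ (true in the paper's application), though that equality, not equidimensionality at $x$, is the right hypothesis: your own $X$ is equidimensional at $(1,1,1)$ yet has $\dim_x X=1<2=\dim X$.
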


\begin{proof}
    The statement is local, so we may assume that $X$ and $Y$ are the $\CC_p$-points of irreducible and reduced affinoids.
    The dimension of $X\cap Y$ is the same as the dimension of $X\times Y$ intersected with the diagonal in $\CC_p^n\times \CC_p^n$. 
    The diagonal is cut out by hyperplanes, so it suffices to check that the dimension of the irreducible components of the intersection of $Z=X\times Y$ with a linear hyperplane have dimension at least $\dim Z-1$.
    Now $Z$ consists of the $\CC_p$-points of a reduced and irreducible affinoid $\mathrm{Spa}(A)$, where $A$ is a domain. 
    By Krull's principal ideal theorem \cite[Theorem 10.2]{eisenbud:commutativealgebra}, given $f\in A$ non-zero, the irreducible components of the zero set of $f$ in $Z$ have dimension $\dim Z-1$.
\end{proof}

 \subsection{Special and weakly special varieties} 
Since $\phi_q:\mathbb{G}_m^n\to E_q^n$ is a transcendental map, it is {\bf not} usually the case that if $W\subseteq\mathbb{G}_m^n$ is an algebraic variety, then $\phi_q(W)$ is also an algebraic variety. 
But this does sometimes happen, in which case we say that $W$ is \emph{bi-algebraic} (with respect to $\phi_q$). 

\begin{defn}
    A subvariety $S\subseteq E_q^n$ is said to be \emph{weakly special} if $S$ is an irreducible component of a coset of an algebraic subgroup of $E_q^n$.
    
    We say that $S$ is \emph{special} if it is an irreducible component of a torsion coset of an algebraic subgroup of $E_q^n$.
\end{defn}

\begin{prop}
\label{prop:weaklyspecial}
    A subvariety $S\subseteq E_q^n$ is weakly special if and only if there is a variety $W\subseteq\mathbb{G}_m^n$ that is bi-algebraic with respect to $\phi_q$ and such that $\phi_q(W)=S$. 
\end{prop}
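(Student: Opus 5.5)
We prove the two directions separately. The key tool is the Ax--Schanuel statement for $\phi_q$, Proposition~\ref{prop:ax.schanuel.tate}, in the form of an Ax--Lindemann type corollary: if $W\subseteq\GG_m^n$ is irreducible and bi-algebraic, then $W$ is a coset of an algebraic subtorus. The direction ($\Leftarrow$) needs this. The direction ($\Rightarrow$) is an explicit construction.

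For ($\Rightarrow$), let $S$ be an irreducible component of $\gamma+H$, where $H\le E_q^n$ is an algebraic subgroup. Since $E_q$ has no complex multiplication, $\mathrm{End}(E_q)=\ZZ$. Hence the connected component $H^0$ is the kernel of some $M\in\MM_n(\ZZ)$, or more precisely the image of $\ZZ^k$ under an integer matrix $N$, so $H^0=N(E_q^k)$. Put $T=\{\mathbf{u}^N:\mathbf{u}\in\GG_m^k\}$, the corresponding subtorus of $\GG_m^n$. Because $\phi_q$ is a homomorphism commuting with the $\MM_n(\ZZ)$-actions, $\phi_q(T)=N\phi_q(\GG_m^k)=N(E_q^k)=H^0$, using surjectivity of $\phi_q$. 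Write $S=\gamma'+H^0$ and pick $\mathbf{w}\in\GG_m^n$ with $\phi_q(\mathbf{w})=\gamma'$, again by surjectivity. Then $W=\mathbf{w}T$ is algebraic and $\phi_q(W)=S$ is algebraic, so $W$ is bi-algebraic.

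For ($\Leftarrow$), let $W$ be bi-algebraic with $\phi_q(W)=S$; we may take $W$ irreducible. Replacing $W$ by its Zariski closure, consider $V=\overline{\Gamma_{\phi_q}\cap(W\times S)}$. This is essentially the graph of $\phi_q$ over $W$, it is algebraic, and $\dim V=\dim W$. Choose a generic point $\mathbf{u}$ of $W$ in the $\acl$-sense of the $1$-h-minimal structure of Corollary~\ref{cor:countable.language.Cpexp} with $q$ added as a constant, working over the parameters defining $W$ and $S$. Then, as in the proof of Theorem~\ref{thm:tateblurring}, there are $d=\dim W$ commuting derivations respecting $\phi_q$, killing those parameters, with $\rk(\partial u_i)=d$. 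Let $C$ be their common kernel. Apply Proposition~\ref{prop:ax.schanuel.tate} after reducing modulo the largest multiplicative dependence modulo $C$: take $M$ of rank $n-k$ with $\mathbf{u}^M\in C$ and pass to the $k$ independent coordinates. This yields $\trdeg_C C(\mathbf{u},\phi_q(\mathbf{u}))\ge k+d$. But $(\mathbf{u},\phi_q(\mathbf{u}))$ lies in $V\cap(\gamma+T_M)$, which is defined over $C$ and has dimension at most $\dim V$. This forces $d\le \dim (W\cap \mathbf{c}T_M)$ with $k=\dim T_M\le d$. Hence $W$ contains the coset $\mathbf{c}T_M$ of dimension $k\ge d$, and since $W$ is irreducible, $W=\mathbf{c}T_M$ is a torus coset. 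Then $S=\phi_q(\mathbf{c})+\phi_q(T_M)$ is a coset of an algebraic subgroup, by the computation in ($\Rightarrow$). So $S$ is weakly special.

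The main obstacle is the Ax--Lindemann step: keeping the dimension bookkeeping straight between $\dim W$, $\dim V$ and $k$, and making sure the generic point is chosen so that the derivations realize full rank $d$. There is a further subtlety in that $\phi_q$ is only definable on a fundamental domain, so one works locally near $\mathbf{u}$, exactly as with $V'$ in the proof of Theorem~\ref{thm:tateblurring}. The non-CM property of $E_q$, noted in \S\ref{sec:Tate}, is what allows algebraic subgroups to be described by integer matrices.
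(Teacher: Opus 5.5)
Your direction ($\Rightarrow$) is fine: it is the paper's argument with more detail. The gap is in ($\Leftarrow$), exactly at the dimension count you flag as the main obstacle.

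\textbf{The failing step.} The variety $V=\overline{\Gamma_{\phi_q}\cap(W\times S)}$ does not have dimension $\dim W$. If it did, your own chain
\[
k+d\le \trdeg_C C(\mathbf{u},\phi_q(\mathbf{u}))\le \dim\bigl(V\cap(\gamma+T_M)\bigr)\le \dim V=d
\]
would force $k=0$. Then $\mathbf{u}\in C^n$, since the constant field $C$ is relatively algebraically closed, and so $d=\rk(\partial u_i)=0$. So for $d>0$ the same chain actually shows $\dim V\ge k+d>d$: the Zariski closure of a transcendental graph is bigger than the graph. For instance, for a torus coset $W$ one has $V=W\times S$, of dimension $2d$. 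Consequently your conclusions ``$d\le\dim(W\cap\mathbf{c}T_M)$ and $k\le d$'' do not follow from anything you wrote. Two further slips: you conflate the $2k$-dimensional subgroup $T_M\le\GG_m^n\times E^n$ with the $k$-dimensional subtorus of $\GG_m^n$, and the containment you need is $W\subseteq\mathbf{c}T_M$, not the reverse.

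\textbf{The fix.} Bound the transcendence degree by the algebraic variety $W\times S$ itself. This is what the paper does: it intersects $W\times S$ with the graph and uses $\dim W=\dim S$. Put $T'=\{\mathbf{X}^M=1\}\le\GG_m^n$ and $H'=\ker M\le E^n$, both of dimension $k$. The point $(\mathbf{u},\phi_q(\mathbf{u}))$ lies in $(W\cap\mathbf{u}T')\times(S\cap(\phi_q(\mathbf{u})+H'))$. This set is defined over $C$, because $\mathbf{u}^M\in C^n$ and $C$ is closed under $\phi_q$. Hence
\[
k+d\le\dim(W\cap\mathbf{u}T')+\dim\bigl(S\cap(\phi_q(\mathbf{u})+H')\bigr)\le \dim(W\cap\mathbf{u}T')+\min(k,\dim S).
\]
Since $\dim S\le d$ and $\dim(W\cap\mathbf{u}T')\le\min(k,d)$, this forces $k=d$ and $\dim(W\cap\mathbf{u}T')=d$. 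So $W$ is an irreducible component of $\mathbf{u}T'$, and your last step applies.

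\textbf{Comparison with the paper.} With this repair your argument is a legitimate alternative route. You prove Ax--Lindemann in one stroke at an $\acl$-generic point, using the $1$-h-minimal derivations as in the proof of Theorem~\ref{thm:tateblurring}. The paper instead applies Proposition~\ref{prop:ax.schanuel.tate} in the functional differential field of analytic functions on a piece of $W$, whose constants are $\CC_p$. From this it extracts only that $W$ lies in a proper torus coset when $\dim S<n$, and then inducts on $n$.
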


\begin{proof}
Suppose first that $S$ is bi-algebraic with respect to $\phi_q$.  
We observe that the result is immediate when $\dim S = 0$ as every point is a coset of the identity.   
Now we proceed by induction on $n$.
When $n=1$, we have that $S$ is a point (so we are done) or $S = E_q$ (so we are also done).

Now we treat the general case. 
    By the definition of bi-algebraic, there is an algebraic variety $W\subseteq\mathbb{G}_m^n$ such that $\phi_{q}(W) = S$, which implies in particular that $\dim W = \dim S$. 
    Let $\mathcal{G}_{\phi_q}^n$ denote the $n$-folds graph of $\phi_q$, that is
    \[\mathcal{G}_{\phi_q}^n:=\{(\mathbf{x},\phi_q(\mathbf{x}))\in\mathbb{G}_m^n\times E_q^n\}.\]
    Then the intersection $(W\times S)\cap\mathcal{G}_{\phi_{q}}^n$ contains the analytic subset $U=\{(\mathbf{w},\phi_{q}(\mathbf{w})) : \mathbf{w}\in W\}$ of dimension $\dim U =\dim W = \dim S$. 
    We can choose algebraic functions $f_1,\ldots,f$ so that $W$ is locally parametrized by $(x_1,\ldots,x_d,f_1(x_1,\ldots,x_d),\ldots,f_{n-d}(x_1,\ldots,x_d))$, where $d=\dim W$. 
    So, if we work in the differential field $(K,\Delta)$ of analytic functions defined on a $p$-adically open subset of $W$ (where the set $\Delta$ consists of the $d$ derivations given by partial differentiation) and we choose the above parametrization as the tuple $(u_1,\ldots,u_n)$ of elements of $K$, then we get that
    \[\dim U = \mathrm{rank}(\partial u_i)_{i\in\{1,\ldots,n\},\partial\in\Delta}.\]
    Since either $\dim S=n$ (and then $S = E_q^n$, in which case we are done) or $\dim U > \dim W + \dim S - n$, in the latter case we use Proposition \ref{prop:ax.schanuel.tate} to conclude that $\phi_q(U) = S$ is contained in a coset of a proper algebraic subgroup. 

    Suppose there are $\mathbf{c}\in E_q^n$ and a proper irreducible algebraic subgroup $G\subset E_q^n$ such that $S\subseteq \mathbf{c} +G$.
    Choose a subtorus $T\subseteq\mathbb{G}_m^n$ such that $\phi_{q}(T)=G$, and choose $\mathbf{a}\in\mathbb{G}_m^n$ such that $\phi_{q}(\mathbf{a}) = \mathbf{c}$ and $W\subseteq \mathbf{a}T$. 
    Then $-\mathbf{c}+S \subseteq G$ and $G\cong E_q^k$ for some $0\leq k<n$. 
    Furthermore, the restriction of $\phi_q$ to $T$ shows that $-\mathbf{c} + S$ is bi-algebraic (as witnessed by $\mathbf{a}^{-1}W$).
    Since $\dim G<n$, by the induction hypothesis $-\mathbf{c}+S$ is a coset of an algebraic subgroup, from which we conclude that $S$ is also a coset of an algebraic subgroup. 

Conversely, suppose $S = \mathbf{c}+G$, for some algebraic subgroup $G\subseteq E_q^n$. 
    Then there is a subtorus $T\subseteq\mathbb{G}_m^n$ such that $\phi_{q}(T) = G$. 
    Choosing $\mathbf{a}\in\mathbb{G}_m^n$ such that $\phi_{q}(\mathbf{a}) = \mathbf{c}$ shows that $S = \phi_{q}(\mathbf{a}T)$, and hence $S$ is bi-algebraic. 
\end{proof}


\subsection{Atypical intersections}
In order to motivate Theorem \ref{thm:likely}, we first introduce the Zilber--Pink conjecture for powers of the elliptic curve $E_q$. 

\begin{defn}
Suppose that $V$ and $W$ are subvarieties of an irreducible smooth algebraic variety $Z$. 
We say that $V$ and $W$ are \emph{likely to intersect}, if
\begin{equation*}
    \dim V+\dim W\geq \dim Z. 
\end{equation*}
We say that $V$ and $W$ are \emph{unlikely to intersect} if the opposite happens
\begin{equation*}
    \dim V+ \dim W<\dim Z.
\end{equation*}
Now assume that $V\cap W\neq\emptyset$, and let $X$ be an irreducible component of the intersection $V\cap W$.
We say that $X$ is an \emph{atypical component of $V\cap W$ (in $Z$)} if
\begin{equation*}
    \dim X > \dim V + \dim W - \dim Z.
\end{equation*}

We say that the intersection \emph{$V\cap W$ is atypical (in $Z$)} if it has an atypical component. 
\end{defn}

\begin{defn}
    Given a subvariety $V\subseteq E_q^n$, there is a smallest special subvariety containing $V$ which we call the \emph{special closure} of $V$, and we denote this by $\mathrm{spcl}(V)$. 
    We define the \emph{defect} of $V$ as
    \[\mathrm{def}(V) := \dim \mathrm{spcl}(V) - \dim V.\]
\end{defn}

Given a positive integer $n$ and $k\in\{0,\ldots,n\}$, let $\mathscr{S}^k$ denote the set of special subvarieties of $E_q^n$ of dimension at most $k$. 
The Zilber--Pink conjecture can be formulated in different ways, the version we give below is phrased in terms of unlikely intersections.

\begin{conj}[Zilber--Pink, unlikely version]
    \label{conj:zpunlikely}
    For every positive integer $n$ and any algebraic variety $V\subseteq E_q^{n}$, the set
\[\bigcup_{S\in\mathscr{S}^{\mathrm{def}(V)-1}}V\cap S\]
is not Zariski dense in $V$. 
\end{conj}

One can also formulate the conjecture in terms of atypical components.

\begin{defn}
We say that $X$ is an \emph{atypical component} of the variety $V\subseteq E_q^n$ if there exists a special subvariety $S$ of $E_q^n$ such that $X$ is an atypical component of $V\cap S$. 
\end{defn}

\begin{conj}[Zilber--Pink, atypical version]
    For every positive integer $n$, every subvariety $V\subset E_q^n$ contains only finitely many maximal (with respect to inclusion) atypical components. 
\end{conj}

We refer the reader to Pila's book \cite{pila_2022} for the equivalence of the two statements, as well as yet another formulation of the conjecture in terms of so-called \emph{optimal subvarieties}. 

Although Conjecture \ref{conj:zpunlikely} remains open, an important result towards it is the following theorem which shows that the atypical components of a variety $V \subseteq E_q^n$ are contained in the cosets of finitely many algebraic subgroups, or in other words, the atypical components of $V$ are contained in finitely many families of weakly special varieties. 

\begin{thm}[Weak Zilber--Pink, {{\cite[Theorem 4.6]{Kirby-semiab}}}]
    \label{thm:weakzp}
Let $K$ denote an algebraically closed field of characteristic zero so that $E_q$ is definable over $K$. 
For every positive integer $n$ and for every $K$-definable family $\{V_b\}_{b\in B}$ of subvarieties of $E_q^n$, there exists a finite collection $\mathscr{H}$ of proper irreducible algebraic subgroups of $E_q^n$ with the following property: for every connected algebraic subgroup $T$ of $E_q^n$, every $\mathbf{c}\in E_q^n\left(K\right)$ and every $b\in B(K)$, if $X$ is an atypical component of $V_b\cap(\mathbf{c}+T)$, then there is $H\in\mathscr{H}$ such that $X\subseteq(\mathbf{c}'+H)$ for some $\mathbf{c'}\in\mathbf{c}+T(K)$, $\dim H\leq \dim V_b+ \dim T - \dim X$ and
\begin{equation*}
    \dim X\leq \dim V_b\cap(\mathbf{c}'+H) + \dim T\cap H - \dim H.
\end{equation*}
Furthermore, if $\mathbf{c}+T$ is the weakly special closure of $X$, then $T\subseteq H$. 
\end{thm}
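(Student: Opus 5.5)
The plan is to deduce the statement from a differential Ax--Schanuel theorem for $E_q^n$, in the spirit of Proposition~\ref{prop:ax.schanuel.tate}, and to get the finite collection $\mathscr{H}$ from a compactness argument. I would proceed by induction on $n$, in the same style as the proof of Proposition~\ref{prop:weaklyspecial}.

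\textbf{Step 1 (pointwise statement).} Fix $b$, a connected subgroup $T$, a point $\mathbf{c}$ and an atypical component $X$ of $V_b\cap(\mathbf{c}+T)$. Let $K'$ be a differentially closed field extending $K$, with $d=\dim X$ commuting derivations. Choose a point $\mathbf{y}\in X(K')$ that is generic over a field $C$ of definition of $V_b$, $\mathbf{c}$, $T$. Arrange $C$ to be the constants, and arrange that the Jacobian of $\mathbf{y}$ with respect to the derivations has rank $d$.

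Next take elliptic logarithms $\mathbf{x}$ of $\mathbf{y}$, that is, differential solutions of the abelian logarithmic derivative equation, as in \eqref{eq:diffeqtate}. For an abelian variety this replaces the multiplicative variables $u_i$ of Proposition~\ref{prop:ax.schanuel.tate}.

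Then apply Ax's theorem for semiabelian varieties (\cite{ax2}, \cite{kirb-weierstrass}) inside $E_q^n$. It gives a smallest connected subgroup $H$ and a coset $\mathbf{c}'+H$ containing $\mathbf{y}$, with the logarithms $\mathbf{x}$ lying in the corresponding $\mathrm{Lie}(H)$-coset modulo constants. It also gives the inequality
\[
\trdeg_C C(\mathbf{x},\mathbf{y}) \geq \dim H + \rk(\partial y_i).
\]

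\textbf{Step 2 (dimension bounds).} Combine this inequality with $\trdeg_C C(\mathbf{y})\le \dim V_b\cap(\mathbf{c}'+H)$. Bound $\trdeg_C C(\mathbf{x}/\mathbf{y})$ by $\dim T\cap H$, since the logarithms lie in $\mathrm{Lie}(T)\cap\mathrm{Lie}(H)$ translated. Rank $d$ and atypicality then yield
\[
\dim X\le \dim V_b\cap(\mathbf{c}'+H)+\dim T\cap H-\dim H \quad\text{and}\quad \dim H\le \dim V_b+\dim T-\dim X.
\]
Atypicality forces $H$ to be proper. The ``furthermore'' clause follows because minimality of $H$ for the generic point means $\mathbf{c}'+H$ contains the weakly special closure of $X$.

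\textbf{Step 3 (uniformity, the main obstacle).} The pointwise argument produces some $H$, but there are countably many subgroups and they must be bounded uniformly in $b$, $T$, $\mathbf{c}$. I would use compactness. Suppose no finite $\mathscr{H}$ works. Then, enumerating the countably many proper subgroups, the type in $b,\mathbf{c}$ and a code for $X$, together with a fixed $T$ (there are countably many choices of $T$, handled by a further diagonal argument), saying ``$X$ is an atypical component not satisfying the conclusion for $H_1,\dots,H_k$'' is finitely satisfiable. This uses definability of dimension in algebraically closed fields. A realization in an elementary extension contradicts Steps 1--2, since algebraic subgroups of $E_q^n$ are all defined over the prime model.

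Making the type countable is delicate when $T$ also varies. I would first reduce to $T$ ranging over finitely many families, because the subgroups of $E_q^n$ are parametrized by matrices and one can work with the family of all cosets simultaneously, then apply the compactness argument to that single definable family.
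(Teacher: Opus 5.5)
\paragraph{A genuine gap: uniformity in $T$.}
Your Step~1 set-up is fine: constants equal to $K$, $\mathbf{y}$ generic in $X$ with $\rk(\partial y_i)=\dim X$, and logarithms $\mathbf{x}$. The trouble is that Steps~1--2 carry no information, so everything rests on Step~3, and Step~3 fails.

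Since $\mathbf{c}$ is constant and $\mathbf{y}\in\mathbf{c}+T$, the smallest $H$ with $\mathbf{y}$ in a constant coset satisfies $H\subseteq T$; it is just the group of the weakly special closure of $X$. Then $T\cap H=H$, and your two inequalities collapse to $\dim X\le\dim V_b\cap(\mathbf{c}'+H)$ and $\dim H\le\dim T$, both trivial. Note that atypicality is never used. In fact, for a single $X$ the choice $H=T$, $\mathbf{c}'=\mathbf{c}$ already satisfies every clause of the conclusion, with $T$ proper because $X$ is atypical. So the whole content of the theorem is the finiteness of $\mathscr{H}$ uniformly in $T$.

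Step~3 does not deliver this. For fixed $T$, compactness gives nothing, since $\{T\}$ already works. Diagonalising over the countably many $T$ yields a countable union, not a finite set. Your proposed reduction is also false. Connected subgroups of $E_q^n$ have unbounded degree: for example, the images of $t\mapsto(bt,at)$ in $E_q^2$ with $\gcd(a,b)=1$ meet $\{0\}\times E_q$ and $E_q\times\{0\}$ in $b^2$ and $a^2$ points. Hence they do not lie in finitely many constructible families, and there is no first-order variable in $\mathrm{ACF}$ ranging over them.

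\paragraph{The missing idea: linearise before using compactness.}
The paper gives no proof and defers to Kirby. The argument behind that citation (Zilber for tori, Kirby for semiabelian varieties) linearises first. Since $\partial\mathbf{x}$ lies in $\mathrm{Lie}(T)$, one has $\mathbf{x}\in\mathbf{a}+\mathrm{Lie}(T)$ with $\mathbf{a}$ constant. So $(\mathbf{x},\mathbf{y})$ lies on $W=(\mathbf{a}+\mathrm{Lie}(T))\times V_b\subseteq\GG_a^n\times E_q^n$.

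Now forget that $\mathrm{Lie}(T)$ is rational. Then $W$ is a member of the single definable family $(\mathbf{a}+L)\times V_b$, with $L$ ranging over a Grassmannian of all linear subspaces. Atypicality is then exactly the Ax hypothesis $\rk(\partial y_i)=\dim X>\dim W-n$.

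Compactness in differentially closed fields over this family, with the countably many proper subgroups enumerated, gives a finite list $\mathscr{J}$. For each such point, $\mathbf{y}$ lies in a constant coset $\gamma+J$ with $J\in\mathscr{J}$. Iterate inside $\gamma+J$ as long as $\dim X>\dim\bigl(W\cap((\mathbf{x}+\mathrm{Lie}(J))\times(\gamma+J))\bigr)-\dim J$ persists. The dimension drops at each step, and each stage again involves only finitely many definable families. The process ends at some $J$ from a finite list satisfying the reverse inequality.

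Since $(\mathbf{a}+\mathrm{Lie}(T))\cap(\mathbf{x}+\mathrm{Lie}(J))=\mathbf{x}+\mathrm{Lie}(T\cap J)$, the reverse inequality is precisely $\dim X\le\dim V_b\cap(\gamma+J)+\dim T\cap J-\dim J$. Choose $\mathbf{c}'$ to be a $K$-point of $(\mathbf{c}+T)\cap(\gamma+J)$. Genericity of $\mathbf{y}$ then gives $X\subseteq\mathbf{c}'+J$. The bound on $\dim J$ and the ``furthermore'' clause follow as well.
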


\subsection{Persistently likely intersections}
\label{subsec:persistent}
\begin{defn}
Let $V$ and $W$ be subvarieties of $E_q^n$. 
We say that the intersection between $V$ and $W$ is \emph{persistently likely} if for every algebraic subgroup $T\subseteq E_q^n$ we have $\dim \psi(V) + \dim \psi(W)\geq n-\dim T$, where $\psi:E_q^n\to E_q^n/T$ denotes the quotient map.  
\end{defn}

The following elementary lemma shows that if $V$ and $S$ are persistently likely to intersect, where $S$ is weakly special, then so are $V$ and $\mathbf{c}+S$ for all $\mathbf{c}\in E_q^n$.

\begin{lem}
\label{lem:dimcosetintersection}
    Let $T,S$ be two irreducible algebraic subgroups of $E_q^n$. 
    For every $\mathbf{a}\in S$, every $\mathbf{c}\in E_q^n$ and every $\mathbf{b}\in \mathbf{c}+S$ we have
    \[\dim S\cap (\mathbf{a}+T) = \dim(\mathbf{c}+S)\cap(\mathbf{b}+T).\]
\end{lem}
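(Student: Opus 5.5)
The plan is to exhibit an explicit isomorphism of varieties between the two intersections, namely a translation of $E_q^n$. Since $\mathbf{a}\in S$, we have $\mathbf{a}+S=S$. Since $\mathbf{b}\in\mathbf{c}+S$, we have $\mathbf{c}+S=\mathbf{b}+S$. So the right-hand side equals $(\mathbf{b}+S)\cap(\mathbf{b}+T)$. The left-hand side can likewise be rewritten as $(\mathbf{a}+S)\cap(\mathbf{a}+T)$.

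Next I will use that translation by a fixed point $\mathbf{g}$ is a bijective morphism of $E_q^n$ with inverse translation by $-\mathbf{g}$. Hence it commutes with intersections: $(\mathbf{g}+S)\cap(\mathbf{g}+T)=\mathbf{g}+(S\cap T)$. Applying this with $\mathbf{g}=\mathbf{a}$ and with $\mathbf{g}=\mathbf{b}$ shows that both sides are translates of the same algebraic subgroup $S\cap T$. Because translation is an automorphism of the variety $E_q^n$, it preserves dimension, so both sides have dimension $\dim(S\cap T)$. That gives the claimed equality.

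There is no real obstacle here. The only points to take care with are the two identities $\mathbf{a}+S=S$ and $\mathbf{c}+S=\mathbf{b}+S$, which use that $S$ is a subgroup. Irreducibility of $S$ and $T$ is not even needed. If one wants to phrase dimensions for possibly reducible intersections, note that translation maps irreducible components bijectively onto irreducible components, so the maximum of their dimensions is preserved.
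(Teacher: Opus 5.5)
Your proof is correct and is essentially the paper's argument. The paper uses the single translation $\mathbf{x}\mapsto \mathbf{b}-\mathbf{a}+\mathbf{x}$ to carry $S\cap(\mathbf{a}+T)$ onto $(\mathbf{c}+S)\cap(\mathbf{b}+T)$, while you factor that same translation through $S\cap T$ by identifying the two sides as $\mathbf{a}+(S\cap T)$ and $\mathbf{b}+(S\cap T)$; your explicit note that translation is an automorphism of $E_q^n$ (not merely a bijection) is what justifies equality of dimensions, and you are right that irreducibility is not needed.
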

\begin{proof}
    Define $\varphi(\mathbf{x}):=\mathbf{b}-\mathbf{a}+\mathbf{x}$. 
    Observe that given $\mathbf{s}\in S$, $\varphi(\mathbf{s}) = \mathbf{b}-\mathbf{a}+\mathbf{s} + \mathbf{c} - \mathbf{c} \in \mathbf{c} + S$, and given $\mathbf{t}\in T$, $\varphi(\mathbf{a}+\mathbf{t}) = \mathbf{b}+\mathbf{t}\in\mathbf{b}+T$.
    Therefore, $\varphi$ defines a bijection between $S\cap (\mathbf{a}+T)$ and $(\mathbf{c}+S)\cap (\mathbf{b}+T)$, which finishes the proof.  
\end{proof}

\begin{cor}
\label{cor:perslikelycosets}
Let $T,S$ be two irreducible algebraic subgroups of $E_q^n$ and let $\psi:E_q^n\to E_q^n/T$ denote the quotient map. 
Then for every $\mathbf{c}\in E_q^n$ we have $\dim\psi(S) = \dim\psi(\mathbf{c}+S)$.
\end{cor}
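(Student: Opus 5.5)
The plan is to show that $\psi(\mathbf{c}+S)$ is a translate of $\psi(S)$ in the quotient group $E_q^n/T$; since translation is an automorphism of the variety $E_q^n/T$, dimensions are preserved. Recall that $\psi: E_q^n\to E_q^n/T$ is a surjective homomorphism of algebraic groups, so for every $\mathbf{x}\in S$ we have
\[
\psi(\mathbf{c}+\mathbf{x}) = \psi(\mathbf{c}) + \psi(\mathbf{x}).
\]
This gives $\psi(\mathbf{c}+S) = \psi(\mathbf{c}) + \psi(S)$. The image $\psi(S)$ is an algebraic subgroup of $E_q^n/T$, being the image of an algebraic group under a homomorphism, hence closed. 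Translation by $\psi(\mathbf{c})$ is an isomorphism of varieties, so $\dim\psi(\mathbf{c}+S) = \dim\psi(S)$.

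Alternatively, one can argue in the spirit of Lemma~\ref{lem:dimcosetintersection}, via the fibre-dimension theorem. Consider the restriction of $\psi$ to $S$. Its fibres are of the form $S\cap(\mathbf{a}+T)$ with $\mathbf{a}\in S$, so $\dim\psi(S) = \dim S - \dim S\cap(\mathbf{a}+T)$. This fibre dimension is constant, since $S\cap (\mathbf{a}+T)$ is a translate of $S\cap T$. Similarly, the restriction of $\psi$ to $\mathbf{c}+S$ has fibres $(\mathbf{c}+S)\cap(\mathbf{b}+T)$ with $\mathbf{b}\in \mathbf{c}+S$. By Lemma~\ref{lem:dimcosetintersection} these have the same dimension as $S\cap(\mathbf{a}+T)$. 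Since $\dim(\mathbf{c}+S) = \dim S$, the fibre-dimension theorem gives $\dim\psi(\mathbf{c}+S) = \dim(\mathbf{c}+S) - \dim S\cap(\mathbf{a}+T) = \dim\psi(S)$.

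The only point requiring care is that images are constructible rather than closed. Dimension is unaffected by this, since we may take Zariski closures, and translation commutes with closure. There is no real obstacle here; the argument is formal.
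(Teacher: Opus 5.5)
Your proof is correct, and your first argument takes a different and shorter route than the paper's.

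The paper fixes a finitely generated field $K$ of definition. It takes $\mathbf{a}\in S$ generic over $K$ and $\mathbf{b}\in\mathbf{c}+S$ generic over $K(\mathbf{c})$. It then applies the fibre-dimension theorem to $\psi$ restricted to $S$ and to $\mathbf{c}+S$. Finally it matches the fibre dimensions $\dim S\cap(\mathbf{a}+T)$ and $\dim(\mathbf{c}+S)\cap(\mathbf{b}+T)$ via Lemma~\ref{lem:dimcosetintersection}.

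That is essentially your second argument. Your version needs no genericity, because you observe that every fibre of $\psi|_S$ is a translate $\mathbf{a}+(S\cap T)$, so all fibres have the same dimension.

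Your first argument avoids fibres altogether. Since $\psi$ is a homomorphism, $\psi(\mathbf{c}+S)=\psi(\mathbf{c})+\psi(S)$, and translation is an automorphism of the variety $E_q^n/T$. This uses neither Lemma~\ref{lem:dimcosetintersection} nor the fibre-dimension theorem. It also proves more: $\dim\psi(\mathbf{c}+V)=\dim\psi(V)$ for any subvariety $V\subseteq E_q^n$, not only for subgroups.

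What the paper's route records in addition is the formula $\dim\psi(S)=\dim S-\dim S\cap(\mathbf{a}+T)$. This is the form in which such dimensions are manipulated later, for instance in Lemma~\ref{lem:persislikelyfinite}. For the corollary itself, your translation argument is sufficient and cleaner.
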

\begin{proof}
    Choose a finitely generated field $K\subseteq\CC_p$ over which $E_q$, $S$, and $T$ are defined. 
    Choose $\mathbf{a}\in S$ generic over $K$. 
    Then by the fiber-dimension theorem we have $\dim S = \dim (\mathbf{a}+T)\cap S+\dim\psi(S)$. 

    Choose $\mathbf{c}\in E_q^n$ and let $\mathbf{b}\in \mathbf{c}+S$ be generic over $K(\mathbf{c})$. 
    As above, we get $\dim(\mathbf{c}+S) = \dim(\mathbf{c}+S)\cap(\mathbf{b}+T) + \dim\psi(\mathbf{c}+S)$. 
    Since $\dim S = \dim(\mathbf{c}+S)$, the result follows from Lemma \ref{lem:dimcosetintersection}. 
\end{proof}

Although the definition of persistent likely intersections considers all possible quotients of $E_q^n$, the next lemma shows that by weak Zilber--Pink, it suffices to consider an appropriate finite collection of quotient maps. 
Of course, trying to determine this finite collection for a given variety $V$ could only be done in practice if one had an effective weak Zilber--Pink statement.

\begin{lem}
    \label{lem:persislikelyfinite}
    Let $V\subseteq E_q^n$ be an irreducible subvariety. 
    There exist finitely many proper algebraic subgroups $H_1,\ldots,H_n$ of $E_q^n$ such that for every special subvariety $S\subset E_q^n$ we have: $V$ and $S$ are persistently likely to intersect if and only if for every $i\in\{1,\ldots,n\}$ we have $\dim\psi_i(V) + \dim\psi_i(S)\geq n-\dim H_i$, where $\psi_i:E_q^n\to E_q^n/H_i$ denotes the quotient map.
\end{lem}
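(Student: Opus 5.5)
The plan is to reduce the condition over all algebraic subgroups $T$ to a condition over the finitely many subgroups supplied by Theorem~\ref{thm:weakzp}. We apply the weak Zilber--Pink theorem to the constant family consisting of $V$ alone, with $K=\CC_p$. This gives a finite collection $\mathscr{H}=\{H_1,\ldots,H_N\}$ of proper algebraic subgroups, which we enlarge by the trivial subgroup. One direction of the equivalence is immediate: if $V$ and $S$ are persistently likely to intersect, the inequality holds for every $T$, in particular for each $H_i$.

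For the converse, suppose the inequality holds for every $H_i$ but fails for some algebraic subgroup $T$ (which we may take connected). Then
\[
\dim\psi(V)+\dim\psi(S)<n-\dim T .
\]
By the fibre-dimension theorem, $\dim\psi(V)=\dim V-\dim(V\cap(\mathbf{v}+T))$ for generic $\mathbf{v}\in V$. By Corollary~\ref{cor:perslikelycosets} and Lemma~\ref{lem:dimcosetintersection}, $\dim\psi(S)=\dim S-\dim(S\cap(\mathbf{s}+T))$, and this does not depend on the coset of $S$. Rewriting, the failure becomes
\[
\dim X>\dim V+\dim(\mathbf{v}+T)-n
\]
(plus correction terms coming from $S\cap T$), where $X$ is a component of $V\cap(\mathbf{v}+T)$ through a generic point. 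So $X$ is an atypical component of $V\cap(\mathbf{v}+T)$. I expect the correction terms to be controlled by replacing $T$ with the connected subgroup $T+S_0$, where $S_0$ is the group underlying $S$. This should turn failure for $(V,S,T)$ into an atypical intersection of $V$ with a coset of a single subgroup $T'$, together with $\psi(S)=0$ in $E_q^n/T'$.

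Next we apply Theorem~\ref{thm:weakzp} to $X$. It yields $H\in\mathscr{H}$ and $\mathbf{c}'\in\mathbf{v}+T'$ with $X\subseteq\mathbf{c}'+H$, $\dim H\le\dim V+\dim T'-\dim X$, and
\[
\dim X\le \dim(V\cap(\mathbf{c}'+H))+\dim(T'\cap H)-\dim H .
\]
Since the generic point of $V$ varies in the family, $\dim(V\cap(\mathbf{c}'+H))$ is the generic fibre dimension of $V\to E_q^n/H$, which equals $\dim V-\dim\psi_H(V)$. Combining these inequalities with the failure inequality for $T'$ should give
\[
\dim\psi_H(V)+\dim\psi_H(S)<n-\dim H ,
\]
which contradicts the hypothesis for $H$. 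To get there we also need $\dim\psi_H(S)\le \dim\psi_{T'}(S)+(\dim T'-\dim(T'\cap H))$, an elementary subgroup computation. The subgroups $H_i$ depend only on $V$, so they work for every special $S$.

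The main obstacle is this bookkeeping. We must check that the atypical component attached to a generic point of $V$ really has its weak Zilber--Pink subgroup $H$ satisfying the bound with the generic fibre dimension, and not with a special fibre. We must also check that passing from $T$ to $T+S_0$ (or choosing the right component) keeps the inequality strict. My first approach would be to choose $T$ maximal-dimensional among violating subgroups. Then the "furthermore" clause of Theorem~\ref{thm:weakzp}, which gives $T\subseteq H$ when $\mathbf{v}+T$ is the weakly special closure of $X$, forces $H$ itself to violate the inequality.
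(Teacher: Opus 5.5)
Your proposal is correct and follows essentially the same route as the paper: apply weak Zilber--Pink to $V$, replace a violating $T$ by $T'=S_0+T$ to get an atypical component through a generic point $\mathbf{v}\in V$, and combine the resulting bound for some $H\in\mathscr{H}$ with generic fibre dimensions and the inequality $\dim\psi_H(S)\le\dim T'-\dim(T'\cap H)$. That inequality is exactly what the paper's identity $\dim((S_0+T)\cap H)+\dim(S_0\cap T)-\dim(S_0\cap H)=\dim((S_0+H)\cap T)$ encodes.

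The bookkeeping you worry about closes directly: since $\dim\psi_T(S)=\dim T'-\dim T$ and $\dim\psi_{T'}(V)\le\dim\psi_T(V)$, failure for $T$ gives $\dim\psi_{T'}(V)<n-\dim T'$. So your closing idea with a maximal violating $T$ and the ``furthermore'' clause is unnecessary. It would also not work as stated, since $\mathbf{v}+T$ need not be the weakly special closure of $X$.
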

\begin{proof}
One direction is obvious, so we will focus on the other one.
Let $\mathscr{H}$ be the finite collection of proper subgroups of $E_q^n$ satisfying the conclusion of Theorem \ref{thm:weakzp} applied to the constant family $V_p=V$. 
    Suppose $T$ is a proper irreducible algebraic subgroup of $E_q^n$ such that $\dim\psi(V) + \dim\psi(S) < n-\dim T$, where $\psi:E_q^n\to E_q^n/T$ denotes the quotient map.
    Let $S_0$ be an irreducible algebraic subgroup of $E_q^n$ and let $\mathbf{c}\in\mathrm{Tor}(E_q^n)$ be such that $S = \mathbf{c} + S_0$. 
    Let $K\subset\CC_p$ be a finitely generated subfield such that $E_q$, $V$, $S_0$, $T$ and every element of $\mathscr{H}$ are defined over $K$. 
    Observe that then every element of $\mathrm{Tor}(E_q^n)$ is defined over $\overline{K}$. 

    Consider the quotient map $\psi':E_q^n\to E_q^n/(S_0+T)$. 
    Choose $\mathbf{v}\in V$ generic over $K$. 
    Then $\dim\psi(V) = \dim V- \dim V\cap(\mathbf{v}+T)$ and $\dim\psi'(V) = \dim V - \dim V\cap(\mathbf{v} +S_0+T)$. 
    On the other hand, since $\dim V\cap(\mathbf{v}+T)\leq\dim V\cap(\mathbf{v} +S_0+T)$, we get
    \begin{equation*}
        \dim V - \dim V\cap(\mathbf{v} +S_0+T) + \dim\psi(S) \leq \dim\psi(V) + \dim\psi(S) < n-\dim T,
    \end{equation*}
    which gives
    \begin{equation*}
        \dim V - \dim V\cap(\mathbf{v} +S_0+T) < n - \dim T -\dim\psi(S).
    \end{equation*}
    The fiber-dimension theorem and Corollary \ref{cor:perslikelycosets} give $\dim (S_0+T) = \dim \psi(S_0) + \dim T$ and $\dim\psi(S) = \psi(S_0)$, hence
    \[ \dim V - \dim V\cap(\mathbf{v} +S_0+T) < n - \dim (S_0+T).\]
    Therefore
    \begin{equation*}
        \dim V\cap(\mathbf{v} +S_0+T) > \dim V + (\dim S_0+T) - n,
    \end{equation*}
    which shows that the intersection between $V$ and $\mathbf{v} +S_0+T$ is atypical. 
    Let $X$ be a component of $V\cap(\mathbf{v}+ S_0+T)$ of maximal dimension. 
    By Theorem \ref{thm:weakzp} there is $H\in\mathscr{H}$ such that $X\subseteq V\cap(\mathbf{v}+H)$ and
    \begin{equation*}
        \dim V\cap(\mathbf{v}+ S_0+T) = \dim X \leq \dim V\cap (\mathbf{v}+H) + \dim( S_0+T)\cap H - \dim H.
    \end{equation*}
    Combining the last two inequalities gives
    \begin{equation*}
        \dim V\cap (\mathbf{v}+H) > \dim V + \dim (S_0+T) + \dim H - \dim (S_0+T)\cap H - n.
    \end{equation*}
    Since $\mathbf{v}\in V$ is generic over $K$ and $H$ is defined over $K$, 
    we still have $\dim\psi_H(V) = \dim V - \dim V\cap(\mathbf{v}+H)$, where $\psi_H:E_q^n\to E_q^n/H$ denotes the quotient map. 
    Furthermore, Lemma \ref{lem:dimcosetintersection} and the fiber-dimension theorem give that for any $\mathbf{b}\in S$ and any $\mathbf{a}\in S_0$,
    \[\dim S = \dim\psi_H(S) + \dim (\mathbf{c}+S_0)\cap(\mathbf{b}+H) =  \dim\psi_H(S) + \dim S_0\cap(\mathbf{a}+H).\]
    Thus, choosing $\mathbf{a}$ to be the identity gives
    \begin{align*}
        &\dim\psi_H(V) + \dim\psi_H(S)\\ &= \dim V-\dim V\cap(\mathbf{v}+H) + \dim S - \dim S_0\cap H\\
        &< \dim V - (\dim V + \dim (S_0+T) + \dim H - \dim (S_0+T)\cap H - n) + \dim S_0-\dim S_0\cap H\\
        &=n-\dim H + \dim(S_0+T)\cap H+\dim S_0\cap T -\dim S_0\cap H- \dim T\\
        &= n-\dim H +\dim (S_0+H)\cap T - \dim T\\
        &\leq n-\dim H.
    \end{align*}
    This shows that $\mathscr{H}$ is the desired finite collection. 
\end{proof}

The following result is a $p$-adic analog of \cite[Theorem 3.3]{eterovic-scanlon}. 

\begin{thm}
Let $V\subseteq E_q^n$ be an irreducible subvariety. 
If $S$ is a weakly special subvariety of $E_q^n$ such that the intersection of $V$ and $S$ is persistently likely, then for every p-adically dense subset $A\subseteq E_q^n$,
\begin{equation*}
    \bigcup_{\mathbf{g}\in A}V\cap (\mathbf{g}+S)
\end{equation*}
is $p$-adically dense in $V$.
\end{thm}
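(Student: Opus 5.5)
The strategy mirrors the proof of Theorem~\ref{thm:tateblurring}. We study the algebraic map
\[
\theta: V\to E_q^n/S_0,\qquad \mathbf{v}\mapsto \mathbf{v}+S_0,
\]
where $S_0$ is the algebraic subgroup with $S=\mathbf{c}+S_0$. A point $\mathbf{v}\in V$ lies in $V\cap(\mathbf{g}+S)$ exactly when $\theta(\mathbf{v})=\theta(\mathbf{g}+\mathbf{c})$. By Corollary~\ref{cor:perslikelycosets} we may replace $S$ by $S_0$: applying it with $T=0$ shows the persistently likely hypothesis is translation invariant, and we can absorb $\mathbf{c}$ into $A$, since $A+\mathbf{c}$ is still $p$-adically dense. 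So it suffices to show the following: for every $p$-adic open $B\subset V$ (in the $1$-h-minimal structure on $\CC_p$ obtained from $\cL_{\mathrm{val}}$ with parameters for $V$ and $E_q$), the image $\theta(B)$ has non-empty interior in $E_q^n/S_0\cong E_q^{n-\dim S_0}$ (up to isogeny). Given that, $\theta(B)$ meets the image of the dense set $A$ under the open quotient map, which yields $\mathbf{g}\in A$ and $\mathbf{v}\in B\cap(\mathbf{g}+S_0)$.

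To get non-empty interior, I would use dimension theory (item (c) and (d) of \S\ref{sec:hmin}). Every open $B\subset V$ has local dimension $\dim V$. If the generic fibre dimension of $\theta$ is $\dim V-\dim\theta(V)$, then $\dim\theta(B)=\dim V-(\text{fibre dim})$. I would then like to prove that the fibres of $\theta$ restricted to $B$ have dimension exactly $\dim V+\dim S_0-n$ on a dense subset, so that $\dim\theta(B)=n-\dim S_0=\dim E_q^n/S_0$, giving interior. This needs $\dim\theta(V)=n-\dim S_0$, i.e.\ $\theta$ dominant. That is precisely the persistently likely condition with $T=S_0$: $\dim\psi(V)+\dim\psi(S_0)\ge n-\dim S_0$ with $\dim\psi(S_0)=0$.

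The main obstacle is that the generic fibre dimension only holds on a Zariski open $V'\subset V$. Non-generic fibres might be larger and could swallow $B$ in the $1$-h-minimal sense. But $V'(\CC_p)$ is $p$-adically dense and open in $V(\CC_p)$, so $B\cap V'$ is a non-empty open set of dimension $\dim V$. All its fibres $V\cap(\mathbf{v}+S_0)\cap B$ are analytic subsets of dimension at most the algebraic fibre dimension $\dim V-\dim\theta(V)$, using the coincidence of algebraic and $1$-h-minimal dimensions (item (f)). Item (c) then gives $\dim\theta(B\cap V')\ge \dim V-(\dim V-\dim\theta(V))=n-\dim S_0$.

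The remaining work is to check $\dim\theta(V)=n-\dim S_0$. Here the only input is the hypothesis with $T=S_0$; the full persistently likely condition over all $T$, and Proposition~\ref{prop:dim.intersection}, are not needed beyond this. I would double-check whether the intended proof uses the stronger hypothesis to control local dimensions of fibres at every point, via Proposition~\ref{prop:dim.intersection} giving the lower bound $\dim_x(V\cap(\mathbf{v}+S_0))\ge\dim V+\dim S_0-n$. I would fall back on that if the generic-fibre argument has a gap.
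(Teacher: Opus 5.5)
Your argument is correct, and it takes a genuinely different and much more elementary route than the paper's.

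\textbf{The paper's route.} It lifts to $\GG_m^n$ via $\phi_q$ on a fundamental domain $\mathcal{F}$, in the $1$-h-minimal expansion of Corollary~\ref{cor:countable.language.Cpexp}. It forms $\mathcal{R}\subseteq U\times\GG_m^n$ and gets $\dim\mathcal{R}=n+k$ from the projection to $U$. It then shows that \emph{every} non-empty fibre $U\cap\phi_q(\mathbf{g}L\cap\mathcal{F})$ has dimension exactly $k$. The lower bound comes from Proposition~\ref{prop:dim.intersection}. The upper bound comes from weak Zilber--Pink (Theorem~\ref{thm:weakzp}), together with the persistently likely hypothesis for the subgroups $H\in\mathscr{H}$ and the shrinking (\ref{eq:fibercondition}) of $U$.

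\textbf{Your route.} You stay downstairs with the algebraic quotient $\theta\colon V\to E_q^n/S_0$. You need only an upper bound on fibre dimensions over a set of full dimension. You get it by discarding the proper Zariski-closed locus where the algebraic fibre dimension jumps, and this step replaces weak Zilber--Pink. No transcendental input is used: the pure valued field $\CC_p$ suffices, and only the algebraic group structure of $E_q^n$ enters.

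Your hesitation about using only the instance $T=S_0$ of the hypothesis is unnecessary. If $V\to E_q^n/S_0$ is dominant, then so is $V\to E_q^n/(S_0+T)$ for every $T$. Hence $\dim\psi_T(V)+\dim\psi_T(S)\geq (n-\dim(S_0+T))+(\dim S_0-\dim(S_0\cap T))=n-\dim T$. So persistently likely is \emph{equivalent} to that single instance.

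\textbf{What the paper's route buys.} Mainly, it conforms to the Eterovi\'c--Scanlon template. That template is designed for settings such as Shimura varieties, where no algebraic quotient map exists and one must argue on the uniformizing space. It also yields the extra information that all fibres over the shrunk $U$ have exactly the expected dimension.

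\textbf{Minor points.} Corollary~\ref{cor:perslikelycosets} with $T=0$ does not give translation invariance, since that would need all $T$. You never need it, though, because $\psi_{S_0}(S)$ is a point anyway. Openness of the quotient map is not needed either: continuity and surjectivity on $\CC_p$-points already make the image of $A+\mathbf{c}$ dense. Items (d) and (f) of \S\ref{sec:hmin} must be applied in affine or local analytic charts, since $V$ and $E_q^n/S_0$ are projective. Alternatively, you can bypass the dimension count: take $V'$ to be the non-empty Zariski-open locus where $\theta$ is smooth (generic smoothness). On $V'(\CC_p)$, which is dense in $V(\CC_p)$, the map $\theta$ is open by the $p$-adic implicit function theorem.
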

\begin{proof}
We first observe that if $S = E_q^n$, then the result is immediate, so from now on we assume that $S$ is a proper special subvariety of $E_q^n$.

The proof proceeds by induction on $n$. 
Suppose $n=1$, then either $V$ is a point or $V=E_q$. 
Since $S$ is a proper weakly special subvariety of $E_q$, it must be a point $\{\xi\}$.
In order for $V$ and $S$ to have a persistently likely intersection, we must have  $V=E_q$, in which case we are done. 

Now we treat the general case.
Let $T\subset E_q^n$ be an irreducible algebraic subgroup, and let $\boldsymbol{\xi}\in E_q^n$ be such that $\boldsymbol{\xi}+ T=S$. 
Let $L\subset\mathbb{G}_m^n$ be a proper subtorus such that $\phi_q(L) = T$. 
Let $\mathcal{F}\subset\mathbb{G}_m^n(\CC_p)$ denote a fundamental domain for $\phi_q$, so that the restriction of $\phi_q$ to $\mathcal{F}$ is definable in the 1-h-minimal structure on $\mathbb{C}_p$ from Corollary \ref{cor:countable.language.Cpexp}. 
Set 
\[k:=\dim V + \dim S - n.\]

Let $U$ be a non-empty, definable, relatively open subset of $V$ (with respect to the $p$-adic topology). 
Let $\mathscr{H}$ be the finite collection of proper subgroups of $E_q^n$ obtained by applying Theorem \ref{thm:weakzp} to (the constant family) $V$. 
By the $h$-minimal fiber-dimension theorem, we may shrink $U$ if necessary (while preserving definability, openness and dimension) so that for every $H\in\mathscr{H}$ and every $\mathbf{x}\in U$ we have
\begin{equation}
    \label{eq:fibercondition}
    \dim U = \dim\psi_H(U) + \dim U\cap(\mathbf{x}+H),
\end{equation}
where $\psi_H:E_q^n\to E_q^n/H$ is the quotient map. 

The relation:
\begin{equation*}
    \mathcal{R}:=\{(\mathbf{u},\mathbf{g})\in U\times\mathbb{G}_m^n(\CC_p) : \mathbf{u}\in\phi_q\left((\mathbf{g}L)\cap\mathcal{F}\right)\}
\end{equation*}
is definable. 
We will now compute the dimension of the definable set $\mathcal{R}$ in two different ways, first by considering the coordinate projection to $U$.

\begin{claim}
\label{claim:dimR}
$\dim \mathcal{R} = \dim \mathbb{G}_m^n(\CC_p) + k = n+k$.
\end{claim}
\begin{proof}
Given $\mathbf{u}\in U$, define $\mathcal{R}_\mathbf{u}:=\{\mathbf{g}\in\mathbb{G}_m^n : (\mathbf{u},\mathbf{g})\in\mathcal{R}\}$. 
Now choose $\mathbf{z}_0\in\mathcal{F}$ such that $\phi_q(\mathbf{z}_0)=\mathbf{u}$. 

Observe that we can write $\mathbf{z}_0 = \boldsymbol{1}\cdot \mathbf{z}_0$, and since $\boldsymbol{1}\in L$, we conclude that $\mathbf{z}_0\in\mathcal{R}_\mathbf{u}$, so in particular $\mathcal{R}_\mathbf{u}$ is non-empty. 
This also shows that the coordinate projection $\mathrm{pr}_U:\mathcal{R}\to U$ is surjective, so $\dim\mathrm{pr}_U(\mathcal{R})= \dim U$. 

Consider the map $\alpha:L\to\mathcal{R}_\mathbf{u}$ given by $\alpha(\mathbf{g}):=\mathbf{z}_0\mathbf{g}$. 
We will now check that $\alpha$ is a definable bijection between $L$ and $\mathcal{R}_\mathbf{u}$. 
\begin{enumerate}
    \item First, we check that $\alpha$ is well-defined. 
    Given $\mathbf{g}\in L$ we have $\mathbf{z}_0\mathbf{g}L = \mathbf{z}_0L$.
Since $\mathbf{z}_0\in\mathcal{R}_\mathbf{u}$, this shows that $\mathbf{z}_0\mathbf{g}\in\mathcal{R}_\mathbf{u}$.

\item Given $\mathbf{g}\in\mathcal{R}_\mathbf{u}$ there is $\mathbf{c}\in L$ such that $\phi_q(\mathbf{g}\mathbf{c})=\mathbf{u}$ and $\mathbf{c}\mathbf{g}\in\mathcal{F}$. 
Since $\phi_q(\mathbf{z}_0)=\mathbf{u}$, there is $\mathbf{m}\in\mathbb{Z}^n$ such that $\mathbf{c}\mathbf{g} = \mathbf{z}_0q^{\mathbf{m}}$, but since both  $\mathbf{z}_0$ and $\mathbf{c}\mathbf{g}$ are in $\mathcal{F}$, then $\mathbf{m}=\boldsymbol{0}$. 
Therefore, $\mathbf{g} = \mathbf{z}_0\mathbf{c}^{-1} = \alpha(\mathbf{c}^{-1})$.
Since $\mathbf{c}^{-1}\in L$, we conclude that $\alpha$ is surjective.

\item Lastly, it is immediate that $\alpha$ is injective and definable. 
\end{enumerate}
With these properties of $\phi$ verified, we immediately get $\dim \mathcal{R}_\mathbf{u} = \dim L$.

Now we prove the claim. 
By the $h$-minimal fiber-dimension theorem, there is $\mathbf{u}\in U$ such that $\dim\mathcal{R} = \dim\mathcal{R}_\mathbf{u} + \dim\mathrm{pr}_U(\mathcal{R})$. 
Then 
\begin{equation*}
    \dim\mathcal{R} = \dim\mathcal{R}_\mathbf{u} + \dim\mathrm{pr}_U(\mathcal{R})
    = \dim L  + \dim U
    = \dim S + \dim V
    = n+k.\qedhere
\end{equation*}
\end{proof}

Next, we compute the dimension of $\mathcal{R}$ using the coordinate projection to $\mathbb{G}_m^n(\CC_p)$. 
Given $\mathbf{g}\in\mathbb{G}_m^n(\CC_p)$, let $\mathcal{R}_{\mathbf{g}}:=\{\mathbf{u}\in U : (\mathbf{u}, \mathbf{g})\in\mathcal{R}\}$. 

Given $i\leq\dim V$, define
\begin{equation*}
    B_i:=\{\mathbf{g}\in\mathbb{G}_m^n:\dim\mathcal{R}_{\mathbf{g}}=i\}.
\end{equation*}
By Proposition \ref{prop:dim.intersection}, we know that the local dimension at each point of $U\cap \phi_q(\mathbf{g}L\cap\mathcal{F})$ is at least $\dim U+\dim L-n=k$, so $B_i=\emptyset$ whenever $i<k$.

\begin{claim}
\label{claim:dimB}
If $i>k$, then $B_i =\emptyset$.
\end{claim}
\begin{proof}
Suppose $i>k$ is such that $B_i\neq\emptyset$.  
Set $\mathbf{a} :=\phi_q(\mathbf{g})$. 
Since $U\cap \phi_q(\mathbf{g}L\cap\mathcal{F}) \subseteq V\cap(\mathbf{a}+S)$, there is an irreducible component $X$ of $V\cap(\mathbf{a}+S)$ of dimension at least $i$, hence
\[\dim X\geq\dim C = i > k = \dim V + \dim (\mathbf{a}+S) - n.\]
This shows that $X$ is an atypical component of $V\cap(\mathbf{a}+S)$. 
Let $H\in\mathscr{H}$ and $\mathbf{c}\in E_q^n$ be so that $X\subseteq \mathbf{c}+H$.
In fact, we may choose $\mathbf{c}\in C\subseteq U$. 

Let $\psi:E_q^n\to E_q^n/H$ denote the quotient map. 
Observe that by (\ref{eq:fibercondition})
\begin{equation*}
    \dim V =\dim U = \dim\psi(U) + \dim U\cap (\mathbf{c}+H).
\end{equation*}
Since we assume the intersection between $V$ and $S$ is persistently likely, Corollary \ref{cor:perslikelycosets} implies that $V$ and $\mathbf{a}+S$ are also persistently likely, so there exists $k'>0$ such that
\begin{equation*}
    \dim\psi(U) + \dim\psi\left(\mathbf{a}+S\right) = n-\dim H+k'.
\end{equation*}
Again using the fiber-dimension theorem, we get
\begin{equation*}
    \dim S =\dim (\mathbf{a}+S) = \dim\psi\left(\mathbf{a}+S\right) + \dim (\mathbf{a}+S)\cap(\mathbf{c}+H).
\end{equation*}
Putting these equalities together gives
\begin{align*}
    \dim U\cap (\mathbf{c}+H) + \dim (\mathbf{a}+S)\cap(\mathbf{c}+H) 
    &= \dim (\mathbf{a}+S) + \dim V - \dim\psi(U) - \dim\psi\left(\mathbf{a}+S\right)\\
    &= \dim V+ \dim S - (n-\dim H+k')\\
    &= \dim H + k-k'.
\end{align*}
Now we look at the intersection between $V\cap (\mathbf{c}+H)$ and $(\mathbf{a}+S)\cap(\mathbf{c}+H)$ inside $\mathbf{c}+H$.
Since $X$ is contained in both $V\cap (\mathbf{c}+H)$ and $(\mathbf{a}+S)\cap(\mathbf{c}+H)$, and $\dim X\geq i>k$, we can use the equalities above along with Theorem \ref{thm:weakzp} to get
\[i\leq \dim X \leq \dim V\cap(\mathbf{c}+H) + \dim S\cap H - \dim H = k-k'\]
which is a contradiction.
\end{proof}
Let $B$ be the image of the projection of $\mathcal{R}$ to $\mathbb{G}_m^n$.
By Claim \ref{claim:dimB} we know that
\[B = \bigcup_{i\in\mathbb{N}}B_i = B_{k}.\]
Combining this with Claim \ref{claim:dimR} and the fiber-dimension theorem we get
\[n + k = \dim\mathcal{R} = \dim B + k,\]
which gives $\dim B = n$. 
By cell decomposition, this implies that $B$ has non-empty interior (as a subset of $\mathbb{G}_m^n$) in the $p$-adic topology.
Since $A$ is $p$-adically dense in $E_q^n$ and $\phi_q$ is a continuous map, there exists $\mathbf{b}\in B$ such that $\boldsymbol{\xi}:=\phi_q(\mathbf{b})\in A$.
Since $\mathbf{b}\in B$, there must exist $\mathbf{u}\in U$ such that $(\mathbf{u},\mathbf{b})\in\mathcal{R}$, and by the definition of $\mathcal{R}$ we conclude that
\[\mathbf{u}\in U\cap(\mathbf{d} +S),\]
thus proving the theorem. 
\end{proof}

\bibliographystyle{alphaurl}
\bibliography{references}{}

\end{document}